\documentclass[11pt,reqno]{amsart}
\usepackage{amsmath, amsfonts, amsthm, amssymb,amscd, graphicx, amscd}
\usepackage{float,epsf}
\usepackage[english]{babel}
\usepackage{enumerate}
\usepackage{tikz}
\usepackage{mathrsfs}
\usepackage[numbers,sort&compress]{natbib}

\usepackage{srcltx}
\usepackage{geometry}
\usepackage{verbatim}
\usepackage{mathrsfs}
\usepackage{hyperref}
\usepackage{enumitem} 
\usepackage{bbm} 
\usepackage{stmaryrd}

\usepackage{relsize}
\usepackage{exscale}

\usepackage{mathtools}

\newtheorem{thm}{Theorem}[section]
\newtheorem{defn}[thm]{Definition}
\newtheorem{prop}[thm]{Proposition}
\newtheorem{lem}[thm]{Lemma}

\newtheorem{rem}[thm]{Remark}

\numberwithin{equation}{section}

\def\dd{{\rm d}}
\hypersetup{bookmarksdepth=2}

\begin{document}
\title{FROM MAGNETIZED COULOMBIC QUANTUM DYNAMICS TO MAGNETIZED FLUIDS}
\author{Immanuel Ben Porat}
\address{Immanuel Ben-Porat, Mathematical Universit\"at Basel
 Spiegelgasse 1
 CH-4051 Basel, Switzerland.}
\email{immanuel.ben-porath@unibas.ch}
\begin{abstract}
We extend the quantum modulated energy developed in \cite{golse2022mean} in order to derive the magnetized pressureless Euler-Poisson equation as a semiclassical and mean field semiclassical limit from the magnetized Schr\"odinger-Poisson and von-Neumann equations, respectively. Local well-posedness of the underlying monokinetic PDE is also addressed. In both limits, the magnetic field is external and may be spatially non-uniform. Our results fall in the broader scope of semiclassical and quantum mean field limits for magnetized quantum dynamics.        
\end{abstract}
\maketitle

\section{Introduction}

\subsection{Main objective}
The purpose of this  work is twofold: the first, is to study the semiclassical limit of the $3D$ \textbf{magnetized  Schr\"odinger-Poisson} equation 
\begin{align} \tag{SPA}
i\hbar \partial_{t}\psi_{\hbar}(t,x)=\frac{1}{2}(i\hbar\nabla_{x}+A(x))^{2}\psi_{\hbar}(t,x)+V\ast \left\vert \psi_{\hbar}\right\vert^{2}(t,x)\psi_{\hbar}(t,x),\  \psi_{\hbar}(0,\cdot)=\psi_{\hbar}^{\mathrm{in}}.  \label{SP equation intro}  
\end{align}
Here $A:\mathbb{R}^{3}\rightarrow \mathbb{R}^{3}$ is a divergence free vector field  ($\mathrm{div}_{x}A=0$) representing the magnetic vector potential; the unknown $\psi_{\hbar}\in C([0,T];L^{2}(\mathbb{R}^{3};\mathbb{C})
)$ is a wave function; $\hbar\ll 1$ is the Planck constant; $V$ is the $3D$ repulsive Coulomb interaction given by $V(x)=\frac{1}{4\pi \left\vert x\right\vert}$.
The second objective is to study  the combined semiclassical mean-field limit of the \textbf{$N$-body magnetized von-Neumann} equation 
\begin{align} \tag{v-NA}
i\hbar\partial_{t}R_{\hbar,N}(t)=\left[\mathscr{H}_{\hbar,A}^{N},R_{\hbar,N}(t)\right], \ R_{\hbar,N}(0)=R^{\mathrm{in}}_{\hbar,N}. \label{von Neumman magnetic intro}   
\end{align}
The unknown $R_{\hbar,N}(t)$ is a time dependent symmetric density operator on the Hilbert space $\mathfrak{H}^{\otimes  N}$ where $\mathfrak{H}\coloneqq L^{2}(\mathbb{R}^{3};\mathbb{C})$ (the precise definition of a density operator will be recalled in  \S\ref{relevant back sec}); the operator $\mathscr{H}_{\hbar,A}^{N}$ is the magnetized quantum $N$-body Hamiltonian defined by 
\begin{align*}
\mathscr{H}_{\hbar,A}^{N}\coloneqq \frac{1}{2}\sum_{\ell=1}^{N}(i\hbar\nabla_{x^{\ell}}+A(x^{\ell}))^{2}+\frac{1}{2N}\sum_{\ell\neq m}V(x^{\ell}-x^{m}).    
\end{align*}
In both limit regimes, we aim  to derive a monokinetic PDE, namely the following  \textbf{magnetized Euler-Poisson} equation given by 
\begin{align}\tag{EPA}
\begin{cases}
\begin{array}{lc}
\partial_{t}\rho+\mathrm{div}_{x}(\rho u)=0,\ \rho(0,\cdot)=\rho^{\mathrm{in}}\\
\partial_{t}u+uD_{x}u+u\mathbf{J}+\nabla_{x}V\ast \rho=0, \ u(0,\cdot)=u^{\mathrm{in}}. 
\end{array}\end{cases} \label{Magnetized Euler Poisson velocity density intro}    
\end{align}
The unknown is $(\rho,u)$, where   $\rho$ is a time dependent probability density $\rho(t,\cdot)\in \mathcal{P}(\mathbb{R}^{3})$ and  $u:\mathbb{R}_{+}\times \mathbb{R}^{3}\rightarrow \mathbb{R}^{3}$ is the velocity field; $\mathbf{J}$ is given by means of the anti-symmetrization of the Jacobian of $A$, namely  
\begin{align}
\mathbf{J}\coloneqq D_{x}A-D_{x}^{T}A. \label{def of antisymmetrized gradient}    
\end{align}
The derivation of \eqref{Magnetized Euler Poisson velocity density intro} as a semiclassical/semiclassical mean-field limit from \eqref{SP equation intro} and \eqref{von Neumman magnetic intro} is to be understood in terms of the \textbf{quantum density} and \textbf{quantum current}, to be introduced in the sequel. 
Our final objective would be to prove weak convergence of the quantum density/current to their classical analogues, namely   $\rho$ and $\rho u$, uniformly on any time interval before the first blow-up time of the solution $(\rho,u)$.   
\subsection{Relevant background}\label{relevant back sec} Prior to presenting our main results, we review the relevant literature regarding the semiclassical limit and mean-field semiclassical limit, both in the presence and absence of magnetic effects. We start with the semiclassical limit and assume momentarily that $A=0$, so that \eqref{SP equation intro} simplifies to 
\begin{align}\tag{SP}
i\hbar\partial_{t}\psi_{\hbar}=-\frac{\hbar^{2}}{2}\Delta_{x}\psi_{\hbar}+V\ast \left\vert \psi_{\hbar}\right\vert^{2}\psi_{\hbar}, \ \psi_{\hbar}(0,\cdot)=\psi_{\hbar}^{\mathrm{in}}. \label{Schrodinger Poisson no magnetic}     
\end{align}
It is instructive to point out the analogy between the Schr\"odinger-Poisson equation and the Vlasov-Poisson equation which reads 
\begin{align}\tag{VP}
\partial_{t}f+\left\{H,f\right\}=0, \ \rho_{f}(t,x)=\int_{\mathbb{R}^{3}}f(t,x,\xi)\ \dd \xi. \label{Vlasov Poisson}    
\end{align}
Here, we designate by $H(x,\xi)=\frac{1}{2}\left\vert \xi\right\vert^{2} +V\ast \rho_{f}$ the classical Hamiltonian  and by $\left\{\cdot,\cdot\right\}$ the Poisson brackets defined by 
\begin{align*}
\left\{f,g\right\}\coloneqq\nabla_{\xi}f\cdot \nabla_{x}g-\nabla_{x}f\cdot \nabla_{\xi}g.     
\end{align*}
On the other hand, consider the time-dependent operator $R_{\hbar}(t)=\vert \psi_{\hbar}(t)\rangle  \vert\langle \psi_{\hbar}(t)\vert $. By direct calculation $R_{\hbar}(t)$ is governed by \textbf{Hartree's equation} which reads 
\begin{align}\tag{HE}i\hbar \partial_{t}R_{\hbar}=\left[\mathscr{H}_{\hbar},R_{\hbar}(t)\right], \ R_{\hbar}(0)= \vert \psi_{\hbar}^{\mathrm{in}}\rangle  \vert\langle \psi_{\hbar}^{\mathrm{in}}\vert. \label{Hartree}      
\end{align}
Here  $\mathscr{H}_{\hbar}=-\frac{h^{2}}{2}\Delta_{x} +V\ast\left\vert\psi_{\hbar} \right\vert^{2}$ is the quantum Hamiltonian and the commutator $\left[\cdot,\cdot\right]$ is defined by $\left[T,S\right]\coloneqq TS-ST$ for any two operators $T,S$ on the Hilbert space $\mathfrak{H}$. Hartree's equation reveals a dictionary between classical and quantum systems in the following manner: 
if we replace probability densities on phase space by density operators on a Hilbert space, Poisson brackets by commutators, velocities $\xi$ by the momentum operator $-i\hbar\nabla_{x}$  and classical Hamiltonians by quantum Hamiltonian then we recover \eqref{Hartree} from \eqref{Vlasov Poisson}.\vspace{0.5 cm}

The  passage from \eqref{Schrodinger Poisson no magnetic}  (or more generally \eqref{Hartree}) to \eqref{Vlasov Poisson} in the limit as $\hbar\rightarrow 0$ is referred to as the \textbf{semiclassical limit} and goes back to the work of Lions-Paul \cite{lions1993mesures}, who proved the semiclassical limit in terms of the Wigner transform.  Other foundational works on the derivation of \eqref{Vlasov Poisson} from \eqref{Schrodinger Poisson no magnetic} include \cite{gerard1997homogenization,Athanassoulis, mauser2002semi}. The works \cite{lafleche2019propagation, lafleche2021global,lafleche2023strong, iacobelli2024enhanced, chong20232} demonstrate more modern approaches to the semiclassical limit, including quantum optimal transport as developed in \cite{Caglioti2023TowardsOptimalTransport}. We refer to chapter 3 in \cite{maas2024optimal} and references therein for an exhaustive overview of quantum optimal transport and its applications in semiclassical limits and quantum mean-field limits. The works mentioned so far are mainly focused on the derivation of kinetic dynamics (Vlasov-Poisson) from dispersive dynamics (Schr\"odinger-Poisson). As indicated previously, the semiclassical limit can also be studied in the context of monokinetic equations, and specifically  the Euler-Poisson equation which reads 
\begin{align}\tag{EP}
\begin{cases}
\begin{array}{lc}
\partial_{t}\rho+\mathrm{div}_{x}(\rho u)=0, \ \rho(0,\cdot)=\rho^{\mathrm{in}}\\
\partial_{t}u+uD_{x}u+\nabla_{x}V\ast \rho=0, \ u(0,\cdot)=u^{\mathrm{in}}. 
\end{array}\end{cases} \label{Euler Poisson no magnetic}    
\end{align}
In this case, the limit is to be understood in terms of the quantum density and quantum current, denoted by $\rho_{\hbar}$ and $J_{\hbar}$ respectively, and defined by 
\begin{align}
\rho_{\hbar}(t,x)=\left\vert \psi_{\hbar}\right\vert^{2}(t,x), \ J_{\hbar}(t,x)= \,\hbar\mathrm{Im}\big(\overline{\psi_{\hbar}}(t,x)\nabla_{x}\psi_{\hbar}(t,x)\big).  
\end{align}
Ultimately, we seek to prove $\rho_\hbar(t,\cdot)\underset{\hbar \rightarrow 0}{\rightarrow}\rho(t,\cdot)$ and $J_{\hbar}(t,\cdot)\underset{\hbar\rightarrow 0}{\rightarrow}\rho u(t,\cdot)$ in some appropriate topology.
It should be mentioned that the relation between  \eqref{Euler Poisson no magnetic} and \eqref{Vlasov Poisson} is given via the monokinetic ansatz: 
\begin{align*}
f(t,x,\xi)=\rho(t,x)\otimes \delta(\xi-u(t,x)) \mbox{ is a solution to \eqref{Vlasov Poisson}}\iff (\rho,u) \mbox{ is a solution to \eqref{Euler Poisson no magnetic}}.    
\end{align*}
There have been various works which employed WKB approximations in order to justify the semiclassical limit leading from \eqref{Schrodinger Poisson no magnetic} to \eqref{Euler Poisson no magnetic} - for example   \cite{alazard2006limite,alazard2007semi, LiuTadmor2002}, just to mention a few.  See also   \cite{grenier1998semiclassical} for the semiclassical limit for a Schr\"odinger equation with a local (instead of convolutional) nonlinearity. The WKB method has the limitation that it provides short time semiclassical convergence, meaning that the convergence holds on a time interval independent of $\hbar$ but possibly shorter than the first blow-up time of \eqref{Euler Poisson no magnetic}. A different method to prove the semiclassical limit, which will be central for the present work, is the method of the \textbf{quantum modulated energy}. In \cite{golse2022mean} Golse-Paul introduce a quantum version of the modulated energy and exploit it to prove the semiclassical limit leading from \eqref{Hartree} to \eqref{Euler Poisson no magnetic} on the entire time interval on which the solution is defined. The quantum modulated energy has also the advantage that it is well suited to study hydrodynamical semiclassical limit regimes, see \cite{rosenzweig2021quantum, ben2026quantum}, and many body quantum dynamics and their joint mean-field semiclassical limits. 
\par\medskip
We proceed by elaborating on the combined semiclassical mean field limit. First we introduce some additional terminology.  By a \textbf{density operator}, 
we mean a bounded operator $R$ on a Hilbert space such that $R$ is self-adjoint non-negative ($R=R^{*}\geq 0$) 
and $\mathrm{tr}(R)=1$. By a \textbf{symmetric operator}  on $\mathfrak{H}^{\otimes N}$ 
we mean an operator $R_{N}$ such that, for all permutations $\sigma \in \mathfrak{S}_{N}$ 
($\mathfrak{S}_{N}$ is the symmetric group on $N$ elements), 
\begin{align*}
U_{\sigma}R_{N}U_{\sigma}^{\ast}=R_{N},
\end{align*}
where  $U_{\sigma}$ is the operator on $\mathfrak{H}^{\otimes N}$ defined by
\begin{align*}
(U_{\sigma}\Psi_{N})(x_{1},\cdots,x_{N})\coloneqq\Psi_{N}(x_{\sigma^{-1}(1)}, \cdots, x_{\sigma^{-1}(N)}) 
\qquad \text{ for any } \Psi_{N}\in \mathfrak{H}^{\otimes N}.
\end{align*}
We denote the class of density operators on $\mathfrak{H}$ by $\mathcal{D}(\mathfrak{H})$ and the class of symmetric density operators on $\mathfrak{H}^{\otimes N}$ by $\mathcal{D}_{s}(\mathfrak{H}^{\otimes N})$. 
If $R_{N}\in \mathcal{D}_{s}(\mathfrak{H}^{\otimes N})$ with integral kernel $k(X_{N},Y_{N})$, 
then  $\rho_{N}\in L^{1}(\mathbb{R}^{3N})$ is  the function 
defined by $\rho_{N}(X_{N})\coloneqq k(X_{N},X_{N})$. 
It is well known that $\rho_{N}$ is a symmetric probability density on $\mathbb{R}^{3N}$
 and is called the \textbf{density} of $R_{N}$ (see the footnote pp. 61--62 in \cite{golse2017schrodinger} for more details). With these definitions we can extend the notion of quantum densities and quantum currents to $N$-body dynamics. Recall first the definition of a marginal.
\begin{defn}
Let $R_{N}\in \mathcal{D}_{s}(\mathfrak{H}^{\otimes N})$. 
For each $1\leq k\leq N,$ define \textit{the $k$-th marginal of $R_{N}$}, as the unique element $R_{N:k}\in \mathcal{D}_{s}(\mathfrak{H}^{\otimes k})$, such that
\begin{align*}
\mathrm{tr}_{\mathfrak{H}^{\otimes k}}(A_{k}R_{N:k})=\mathrm{tr}_{\mathfrak{H}^{\otimes N}}\big((A_{k}\otimes I^{\otimes(N-k)})R_{N}\big)
\end{align*}
for all bounded operators $A_{k}$ on $\mathfrak{H}^{\otimes k}$. \label{def of marginal}
\label{def of current}
\end{defn}
The quantum current of $R_{N}$, denoted by $J_{\hbar,N:1}$, is the unique signed vector-valued Radon measure on $\mathbb{R}^{3}$ such that, for all $a \in W^{1,\infty}(\mathbb{R}^{3};\mathbb{R}^{3})$ it holds that 
 \begin{align}
\int_{\mathbb{R}^{3}}a(x)\cdot J_{\hbar,N:1}(\dd x)=-\frac{1}{2}\mathrm{tr}\big(( i\hbar\nabla_{x}\vee a)R_{N:1}\big). \label{non mag current def}
 \end{align}
 Here $\vee$ designates the anti-commutator defined by $T\vee S\coloneqq TS+ST$. The quantum density of $R_{N}$, denoted by $\rho_{N:1}$, is the density of the operator $R_{N:1}$. In some precise sense \eqref{Hartree} is a limiting case of von-Neumman's equation (see \cite{ErdosYau2001,rodnianski2009quantum}), and therefore the joint semiclassical  mean-field limit is more complicated than the semiclassical limit. Still working under the simplification $A\equiv 0$ the equation \eqref{von Neumman magnetic intro} becomes 
 \begin{align}\tag{v-N}
 i\hbar\partial_{t}R_{\hbar,N}(t)=\left[-\frac{\hbar^{2}}{2}\sum_{\ell=1}^{N}\Delta_{x^{\ell}}+\frac{1}{2N}\sum_{\ell\neq m}V(x^{\ell}-x^{m}),R_{\hbar,N}(t)\right], \ R_{\hbar,N}(0)=R_{\hbar,N}^{\mathrm{in}}. \label{von neumman non magnetic} \end{align}
 Recall that the unknown $R_{\hbar,N}(t)$ in \eqref{von neumman non magnetic} is a time-dependent symmetric density operator on $\mathfrak{H}^{\otimes N}$. Some of the earliest works handling the passage from \eqref{von neumman non magnetic} to \eqref{Vlasov Poisson} in the limit $\hbar+\frac{1}{N}\rightarrow 0$ are \cite{spohn1981vlasov,narnhofer1981vlasov}—in both works the interaction $V$ is required to enjoy enough regularity. The theory of quantum optimal transport has also been applied in order to justify this passage in \cite{golse2017schrodinger, golse2016mean}, but again subject to regularity assumptions on $V$ which exclude Coulomb interactions. The mean-field semiclassical limit leading from \eqref{von neumman non magnetic} to \eqref{Vlasov Poisson} for interactions with Coulomb singularity remains open. However, the 1D screened Coulomb case has been solved in \cite{chen2024global} and there have been some promising partial results in higher dimensions demonstrated in \cite{chong2024many, chen2022convergence}. The monokinetic version of this problem, i.e. the derivation of \eqref{Euler Poisson no magnetic} from \eqref{von neumman non magnetic}, has been fully resolved in \cite{golse2022mean}, by applying the breakthrough of Serfaty \cite{Serfaty2020MeanField} which offered the first full characterization of the mean-field limit for Coulomb flows.
 
 \par\medskip
To conclude the discussion on the relevant literature, we review the state of the art of the limit regimes discussed above in the presence of magnetic fields. The semiclassical limit for the magnetized Liouville equation, which is a linear version of the magnetized Vlasov equation, has been studied in \cite{arnold1989electromagnetic} using a Wigner transform approach. The semiclassical limit for the magnetized Liouville equation has also been the main object of interest of \cite{ben2024magnetic}, in which the author employs a quantum optimal transport approach, which yields explicit convergence rates. In \cite{moller2025pauli} the semiclassical limit of the the Pauli-Poisson equation is proved by adapting the Lions-Paul approach mentioned earlier: however the limit is proved along a subsequence and is valid subject to decay assumptions on $A$ which exclude constant magnetic fields. All of the latter works consider external magnetic vector fields. The case of self-consistent magnetic fields has also received attention: in \cite{leopold2026derivation} the authors introduce a regularized version of the Schr\"odinger-Maxwell equation and prove semiclassical convergence to the associated Vlasov-Maxwell equation. In \cite{mauser2023semiclassical} the authors apply WKB methods in order to semiclassically derive the Euler-Poisswell equation, which is a version of \eqref{Euler Poisson no magnetic} accounting for self consistent magnetic fields.   We also mention the related semiclassical limit of the magnetic Dirac equation, which gives rise to relativistic dispersive dynamics, and has been investigated in \cite{moller2025pauli}. We are not aware of works about the combined semiclassical mean-field limit in the presence of magnetic effects, although we mention \cite{luhrmann2012mean} which provides a non-uniform in $\hbar$ derivation of the magnetized Hartree equation as a cutoff mean-field limit.      
\subsection{Statement of main results.} In this work we adapt the quantum modulated energy method in order to prove the semiclassical limit and mean-field semiclassical limit leading from \eqref{SP equation intro} and \eqref{von Neumman magnetic intro}, respectively, to \eqref{Magnetized Euler Poisson velocity density intro}. Before stating our main theorems, we introduce the following quantum version of the modulated energy, denoted by $\mathcal{E}_{\hbar}(t)$. Given a solution $(\rho,u)$ and a solution $\psi_{\hbar}$ to \eqref{SP equation intro} to \eqref{Magnetized Euler Poisson velocity density intro} define  
\begin{align}
\mathcal{E}_{\hbar}[\rho,u](t)\coloneqq \int_{\mathbb{R}^{3}}\left\vert (i\hbar\nabla_{x}+A+u)\psi_{\hbar}\right\vert^{2}(t,x)\ \dd x +\int_{\mathbb{R}^{3}}V\ast(\rho_{\hbar}-\rho)(t,x)(\rho_{\hbar}-\rho)(t,x)\ \dd x.   \label{quantum modulated energy}
\end{align}
The quantity \eqref{quantum modulated energy} is a magnetized version of the quantum modulated energy introduced in  \cite{golse2022mean}. Note that taking $\rho\equiv 0$ and $u\equiv 0$ in \eqref{quantum modulated energy} we recover the \textbf{total energy} of the system given by 
\begin{align*}
\mathcal{F}_{\hbar}(t)=\int_{\mathbb{R}^{3}}\left\vert (i\hbar\nabla_{x}+A)\psi_{\hbar}\right\vert^{2}(t,x)\ \dd x +\int_{\mathbb{R}^{3}}V\ast \rho_{\hbar}(t,x)\rho_{\hbar}(t,x)\ \dd x.   \end{align*}  By a simple calculation we have conservation of total energy, that is  
\begin{align}
\frac{\dd}{\dd t}\mathcal{F}_{\hbar}(t)=0. \label{Conservation of energy}    
\end{align}
For the $N$-body problem we need to introduce a \textbf{re-normalized} quantum modulated energy, denoted by $\mathcal{E}_{\hbar,N}(t)$ and defined by 
\begin{align}
\mathcal{E}_{\hbar,N}[\rho,u](t)&= \frac{1}{N}\sum^N_{\ell=1}\mathrm{tr}\big((i\hbar\nabla_{x
^{\ell}}+A(x^{\ell})+u(t,x^{\ell})
)^{2}R_{\hbar,N}(t)\big) \notag\\
&+\int_{\mathbb{R}^{3N}}\left(\int_{\Delta^{c}}V(x-y)(\mu_{X^{N}}-\rho)^{\otimes 2}(\dd x\dd y)\right)\rho_{\hbar,N}(t,\dd X^{N})+\frac{C}{N^{\frac{2}{3}}}. \label{Def of renormalized modulated energy}    
\end{align}
In the above definition we invoked the following notation: 
\begin{itemize}
\item $\Delta^{c}$ is the complement of the diagonal, i.e. $\Delta^{c}\coloneqq\{(x,y)\in \mathbb{R}^{3}\times \mathbb{R}^{3}\vert x\neq y\}$. More generally, define $\Delta_{N}^{c}\coloneqq\{(x^{1},\dots,x^{N})\in \mathbb{R}^{3N}\vert \forall i\neq j:x_{i}\neq x_{j}\}$.
\item $\mu_{X^{N}}$ is the empirical measure centered at a configuration $X^{N}\in \Delta_{N}^{c}$, i.e. $$\mu_{X^{N}}\coloneqq \frac{1}{N}\sum_{i=1}^{N}\delta_{x_{i}}$$. 
\item The constant $C>0$ is given by point i. in Lemma \ref{reminder of positivity}. The reason for the inclusion of this constant is to ensure that the quantity $\mathcal{E}_{\hbar,N}(t)$ is non-negative.
\item $\rho_{\hbar,N}(t,\cdot)\in \mathcal{P}(\mathbb{R}
^{3N})\cap L^{1}(\mathbb{R}^{3N})$ is the density of $R_{\hbar,N}(t)$. 
\end{itemize}
The basic idea underpinning the proof of the semiclassical limit and semiclassical mean-field limit is to study the evolution of $\mathcal{E}_{\hbar}(t)$ and $\mathcal{E}_{\hbar,N}(t)$ with the aim of obtaining a Gr\"onwall estimate on these quantities. This would lead to the vanishing of $\underset{t\in [0,T]}{\sup}\mathcal{E}_{\hbar}(t)$ and $\underset{t\in [0,T]}{\sup}\mathcal{E}_{\hbar,N}(t)$   as $\hbar\rightarrow 0$ and $\hbar+\frac{1}{N}\rightarrow 0$, respectively. In what follows we impose the following technical assumption on $A$, borrowed from \cite{luhrmann2012mean}: 

\begin{itemize}
    \item[\textbf{A1}] $A\in C^{\infty}(\mathbb{R}^{3};\mathbb{R}^{3})$. 
    \item[\textbf{A2}] There is some $\varepsilon>0$ such that for all $\alpha\in \mathbb{N}^{3}$ it holds that 
    \begin{align*}
     \left\vert \partial^{\alpha}\mathrm{curl}_{x}A(x)\right\vert\leq C_{\alpha}(1+\left\vert x\right\vert)^{-(1+\varepsilon)}\ \mbox{and}\ \vert \partial_{\alpha}A\vert\leq C_{\alpha}.     \end{align*}
\end{itemize}  
Note that in all the results we prove we allow the magnetic field to be spatially non-uniform. At the same time, any linear magnetic vector potential (which corresponds to a uniform magnetic field) verifies assumption \textbf{A1}-\textbf{A2}.  Our first theorem proves the derivation of \eqref{Magnetized Euler Poisson velocity density intro} from \eqref{SP equation intro} in the limit as $\hbar \rightarrow 0$ and is stated below. We denote by $H^{k}_{A}(\mathbb{R}^{3})$ the magnetic Sobolev space of order $k$ (see \S\ref{Magnetized SP AND VP SEC} for the definition). 
\begin{thm}\label{main thm semi classical}
Suppose that: 
\begin{itemize}
    \item $A:\mathbb{R}^{3}\rightarrow \mathbb{R}^{3}$ is a given vector field satisfying \textup{\textbf{A1}-\textbf{A2}}. 
    \item $\psi^{\mathrm{in}}_{\hbar}\in H^{2}_{A}(\mathbb{R}^{3})$ is such that $\int_{\mathbb{R}^{3}}\left\vert \psi_{\hbar}^{\mathrm{in}}\right\vert^{2}(x)\ \dd x=1$.   
    \item $(\rho^{\mathrm{in}},u^{\mathrm{in}})\in (H^{3}(\mathbb{R}^{3})\cap \mathcal{P}(\mathbb{R}^
    {3}))\times H^{4}(\mathbb{R}^{3})$.
\end{itemize}
Let $(\rho,u)$ be the unique solution to \eqref{Magnetized Euler Poisson velocity density intro} with initial data $(\rho^{\mathrm{in}},u^{\mathrm{in}})$ on $[0,T]$ (as guaranteed by Theorem \ref{well posedness of EPA}). Let $\psi_{\hbar}$ be the unique solution of \eqref{SP equation intro} with initial data $\psi^{\mathrm{in}}_{\hbar}$ (as guaranteed by Theorem \ref{Magnetized SP well posed}). Set 
\begin{align}
J_{\hbar}\coloneqq\,\hbar\mathrm{Im}\big(\overline{\psi_{\hbar}}(t,x)\nabla_{x}\psi_{\hbar}(t,x)\big)-A\left\vert \psi_{\hbar}\right\vert^{2}(t,x) \ \mbox{and} \ \rho_{\hbar}(t,x)\coloneqq\left\vert \psi_{\hbar}\right\vert^{2}(t,x).
\label{single body current mag}      
\end{align}
Then, it holds that 
\begin{align}
\underset{t\in [0,T]}{\sup}\mathcal{E}_{\hbar}(t)\underset{\hbar \rightarrow 0}{\rightarrow}0 \ \mbox{provided}\ \mathcal{E}_{\hbar}(0)\underset{\hbar \rightarrow 0}{\rightarrow}0. \label{vanishing of quantum modulated energy cocnlusion}    
\end{align}
Consequently, it holds that 
\begin{align}
\underset{t\in [0,T]}{\sup}\left\Vert (\rho_{\hbar}-\rho)(t,\cdot)\right\Vert_{\dot H^{-1}(\mathbb{R}^{3})}\underset{\hbar \rightarrow 0}{\rightarrow}0 \ \mbox{and}\ \underset{t\in [0,T]}{\sup}\left\Vert (J_{\hbar}-\rho u)(t,\cdot)\right\Vert_{W^{-1,\infty}(\mathbb{R}^{3};\mathbb{R}^{3})}\underset{\hbar \rightarrow 0}{\rightarrow}0. \label{convergence conclusion 1d}       
\end{align}
\end{thm}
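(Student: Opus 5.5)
The plan is to follow the Golse--Paul quantum modulated energy strategy and derive a Gr\"onwall inequality
\begin{align*}
\frac{\dd}{\dd t}\mathcal{E}_{\hbar}(t)\leq C\big(\Vert D_{x}u\Vert_{L^{\infty}},\Vert \rho\Vert_{W^{1,\infty}},\Vert A\Vert_{W^{2,\infty}}\big)\big(\mathcal{E}_{\hbar}(t)+\hbar^{2}\big).
\end{align*}
We work first with $H^{2}_{A}$ solutions, where all manipulations are justified by Theorem \ref{Magnetized SP well posed}. We expand $\mathcal{E}_{\hbar}$ by writing $\Pi\coloneqq i\hbar\nabla_{x}+A$ for the magnetic momentum:
\begin{align*}
\mathcal{E}_{\hbar}=\mathcal{F}_{\hbar}+2\int u\cdot \mathrm{Re}(\overline{\psi_{\hbar}}\Pi\psi_{\hbar})\,\dd x+\int |u|^{2}\rho_{\hbar}\,\dd x-2\int V\ast\rho\,\rho_{\hbar}\,\dd x+\int V\ast\rho\,\rho\,\dd x .
\end{align*}
One checks directly that $\mathrm{Re}(\overline{\psi_{\hbar}}\Pi\psi_{\hbar})=-J_{\hbar}$, with $J_{\hbar}$ as in \eqref{single body current mag}. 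The term $\mathcal{F}_{\hbar}$ is constant by \eqref{Conservation of energy}.

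The remaining terms are differentiated using local conservation laws for \eqref{SP equation intro}. The continuity equation is $\partial_{t}\rho_{\hbar}+\mathrm{div}_{x}J_{\hbar}=0$, where $J_{\hbar}$ is the magnetic current; here one uses $\mathrm{div}_{x}A=0$. The momentum equation has the form
\begin{align*}
\partial_{t}J_{\hbar}+\mathrm{div}_{x}\,\mathrm{Re}\big((\Pi\psi_{\hbar})\otimes\overline{(\Pi\psi_{\hbar})}\big)-\tfrac{\hbar^{2}}{4}\nabla_{x}\Delta_{x}\rho_{\hbar}+\mathbf{J}\text{-term}(J_{\hbar})+\rho_{\hbar}\nabla_{x}V\ast\rho_{\hbar}=0,
\end{align*}
where the $\mathbf{J}$-term is the quantum Lorentz force coming from $[\Pi_{j},\Pi_{k}]=i\hbar(\partial_{j}A_{k}-\partial_{k}A_{j})$. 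It is precisely this commutator that produces $\mathbf{J}$ in \eqref{Magnetized Euler Poisson velocity density intro}.

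We then substitute the equation for $u$ from \eqref{Magnetized Euler Poisson velocity density intro} and reorganize exactly as in the non-magnetic case. This gives
\begin{align*}
\frac{\dd}{\dd t}\mathcal{E}_{\hbar}=-2\int D_{x}u:\mathrm{Re}\big((\Pi+u)\psi_{\hbar}\otimes\overline{(\Pi+u)\psi_{\hbar}}\big)+\hbar^{2}\text{-terms}+(\text{Lorentz terms})+\mathcal{I}_{\mathrm{pot}}.
\end{align*}
The potential term $\mathcal{I}_{\mathrm{pot}}$ is the Coulomb commutator $\int D_{x}u:\big(\nabla V\ast(\rho_{\hbar}-\rho)\otimes\nabla V\ast(\rho_{\hbar}-\rho)-\tfrac12|\nabla V\ast(\rho_{\hbar}-\rho)|^{2}I\big)$. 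Via the stress-energy tensor identity for the electric field, it is bounded by $\Vert D_{x}u\Vert_{L^{\infty}}\Vert\rho_{\hbar}-\rho\Vert^{2}_{\dot H^{-1}}$. The first term is bounded by $2\Vert D_{x}u\Vert_{L^{\infty}}\mathcal{E}_{\hbar}$. The $\hbar^{2}$-terms are of the form $\hbar^{2}\int\Delta\mathrm{div}u\,\rho_{\hbar}$ and are $O(\hbar^{2}\Vert u\Vert_{W^{3,\infty}})$, which is finite by $u\in H^{4}$ and Sobolev embedding.

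\textbf{Main obstacle: the magnetic (Lorentz) contributions.} The quantum contribution is $\int u\cdot\mathbf{J}$ paired with the quantum current. The classical contribution $u\mathbf{J}$ enters through the modulated cross term. I expect these to combine into
\begin{align*}
\int (\mathbf{J}u)\cdot\big(J_{\hbar}-\rho_{\hbar}u\big)\,\dd x .
\end{align*}
This should be a pure antisymmetric pairing of $u$ with $\mathbf{J}u$, with the $\rho_{\hbar}\,u\cdot\mathbf{J}u$ piece vanishing because $\mathbf{J}$ is antisymmetric. Verifying this exact cancellation is the delicate step. The sign and ordering conventions ($u\mathbf{J}$ as a row vector times a matrix) must be checked against the commutator $[\Pi_{j},\Pi_{k}]$, and we must make sure no residual term of size $\Vert\mathbf{J}\Vert_{L^\infty}|u|\,|J_{\hbar}-\rho_\hbar u|$ survives uncontrolled. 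If a residual of that size does survive, it is still acceptable. By Cauchy--Schwarz, $|J_{\hbar}-\rho_{\hbar}u|\le |\psi_{\hbar}|\,|(\Pi+u)\psi_{\hbar}|$, so the residual is bounded by $\Vert\mathbf{J}\Vert_{L^{\infty}}\Vert u\Vert_{L^{\infty}}\mathcal{E}_{\hbar}^{1/2}$. That would give $\frac{\dd}{\dd t}\mathcal{E}_{\hbar}\le C(\mathcal{E}_{\hbar}+\mathcal{E}_{\hbar}^{1/2}+\hbar^{2})$, whose solution still tends to $0$ uniformly on $[0,T]$ when $\mathcal{E}_{\hbar}(0)\to0$. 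Assumption \textbf{A2} guarantees $\mathbf{J}\in W^{k,\infty}$, so all coefficients are bounded.

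\textbf{From the modulated energy to the convergence statements.} Gr\"onwall on $[0,T]$ gives \eqref{vanishing of quantum modulated energy cocnlusion}. A density argument, using continuity of the flow in $H^{2}_{A}$, removes the extra regularity used in the computation. For \eqref{convergence conclusion 1d}, the potential part of $\mathcal{E}_{\hbar}$ is exactly $\Vert\rho_{\hbar}-\rho\Vert_{\dot H^{-1}}^{2}$. For the current, take $a\in W^{1,\infty}$ with norm at most $1$ and split
\begin{align*}
\int a\cdot(J_{\hbar}-\rho u)=\int a\cdot(J_{\hbar}-\rho_{\hbar}u)+\int a\cdot u\,(\rho_{\hbar}-\rho).
\end{align*}
The first term is at most $\mathcal{E}_{\hbar}^{1/2}$ by the Cauchy--Schwarz bound above, since $\Vert\psi_{\hbar}\Vert_{L^{2}}=1$. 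For the second term, interpolate: $\rho_{\hbar}-\rho$ is bounded in $\dot H^{-1}$ and has bounded mass. One approach is to mollify $a\cdot u$ at scale $\delta$, so that $\Vert\nabla(\text{mollified }a\cdot u)\Vert_{L^{2}}\lesssim\delta^{-1/2}$ against the $\dot H^{-1}$ norm. The error $O(\delta)$ is controlled by the Lipschitz bound and total mass $2$. Optimizing in $\delta$ gives uniform convergence to $0$.
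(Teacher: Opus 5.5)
Your route is a legitimate fluid-variable version of the paper's computation: you expand $\mathcal{E}_\hbar$ around the conserved $\mathcal{F}_\hbar$ and differentiate using the local continuity and momentum equations, where the paper uses commutators with the Hartree Hamiltonian. Your handling of the kinetic tensor term $-2\int D_xu:\mathrm{Re}\big((\Pi+u)\psi_\hbar\otimes\overline{(\Pi+u)\psi_\hbar}\big)$ and of the Coulomb term is correct.

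\textbf{The gap is in the Lorentz terms}, the one step that is new in the magnetic setting. You do not verify their cancellation. The combination you predict, $\int(\mathbf{J}u)\cdot(J_\hbar-\rho_\hbar u)\,\dd x=\int(\mathbf{J}u)\cdot J_\hbar\,\dd x$, is not what appears, and it does not vanish in general. Your fallback is also false. A remainder bounded by $C\,\mathcal{E}_\hbar^{1/2}$ with an $\hbar$-independent $C$ cannot be absorbed: the inequality $y'\le C(y+\sqrt{y}+\hbar^2)$ does not force $y\to0$, because $\sqrt{y}$ violates the Osgood condition at $0$. Concretely, $y(t)=(e^{Ct/2}-1)^2$ solves $y'=C(y+\sqrt{y})$ with $y(0)=0$. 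The comparison solutions with $y(0)=\mathcal{E}_\hbar(0)\to0$ converge to this function, not to $0$. In a modulated energy argument, every term that is linear in the deviation $J_\hbar-\rho_\hbar u$ with an order-one coefficient must cancel identically, using the limit equation.

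\textbf{The missing step is short.} The quantum Lorentz force is exactly $J_\hbar\mathbf{J}$, with no $\hbar$-correction. In operator form, $(\Pi_k\vee\mathbf{J}_{jk})\vee u_j=2\,\Pi_k\vee(\mathbf{J}_{jk}u_j)$, because $\mathbf{J}_{jk}[\Pi_k,u_j]+[u_j,\Pi_k]\mathbf{J}_{jk}=0$. So the momentum equation reads $\partial_tJ_\hbar+\dots+J_\hbar\mathbf{J}+\rho_\hbar\nabla_xV\ast\rho_\hbar=0$. The magnetic contributions to $\frac{\dd}{\dd t}\big(-2\int u\cdot J_\hbar+\int|u|^2\rho_\hbar\big)$ are then
\[2\int(u\mathbf{J})\cdot J_\hbar\,\dd x+2\int u\cdot(J_\hbar\mathbf{J})\,\dd x-2\int u\cdot(u\mathbf{J})\,\rho_\hbar\,\dd x=0.\]
The last term vanishes pointwise, and the first two cancel because $v\cdot(w\mathbf{J})=-w\cdot(v\mathbf{J})$ for antisymmetric $\mathbf{J}$. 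This is the fluid counterpart of Lemma \ref{cancellation lemma} and \eqref{Cancelation single}. With it the magnetic field drops out entirely, and you obtain $\frac{\dd}{\dd t}\mathcal{E}_\hbar\le C(\mathcal{E}_\hbar+\hbar^2)$, as in the paper.

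\textbf{Two smaller corrections.}

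First, your $\hbar^2$-term $\hbar^2\int\Delta_x\mathrm{div}_xu\,\rho_\hbar$ is not controlled by $u\in H^4(\mathbb{R}^3)$, which embeds only into $W^{2,\infty}$. Integrate by parts once and use $\hbar\nabla_x\rho_\hbar=2\,\mathrm{Im}\big(\overline{\psi_\hbar}(\Pi+u)\psi_\hbar\big)$. This gives the bound $2\hbar\Vert\nabla_x\mathrm{div}_xu\Vert_\infty\mathcal{E}_\hbar^{1/2}\le\mathcal{E}_\hbar+C\hbar^2$. Young's inequality works here only because of the factor $\hbar$, which is exactly what the Lorentz residual lacks.

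Second, for the current you do not need mollification. Since $u\in H^1$, you have $\Vert\nabla_x(a\cdot u)\Vert_2\le\Vert a\Vert_{W^{1,\infty}}\Vert u\Vert_{H^1}$. Hence $\vert\int a\cdot u\,(\rho_\hbar-\rho)\,\dd x\vert\le\Vert a\Vert_{W^{1,\infty}}\Vert u\Vert_{H^1}\Vert\rho_\hbar-\rho\Vert_{\dot H^{-1}}$ directly.
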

Our second main theorem proves the derivation of \eqref{Magnetized Euler Poisson velocity density intro} from \eqref{von Neumman magnetic intro}. To the best of our knowledge, this is the first mean-field limit result for quantum many body dynamics which are simultaneously singular and magnetized . As has already been clarified in \cite{golse2022mean}, this limit is more complicated in comparison to the single body problem, as it necessitates the use of the commutator estimates discovered in \cite{Serfaty2020MeanField}. The definition \eqref{non mag current def} readily extends to the magnetized settings: we denote by  $J_{\hbar,N:1}$ the unique signed vector-valued Radon measure on $\mathbb{R}^{3}$ such that for all $a \in W^{1,\infty}(\mathbb{R}^{3};\mathbb{R}^{3})$ it holds that 
 \begin{align}
\int_{\mathbb{R}^{3}}a(x)\cdot J_{\hbar,N:1}(\dd x)=-\frac{1}{2}\mathrm{tr}\big( (i\hbar\nabla_{x}+A)\vee aR_{N:1}\big). \label{defintion of N body current}
 \end{align}
 Setting 
\begin{align}
\mathscr{K}_{\hbar}^{N}\coloneqq \frac{1}{2}\sum_{\ell=1}^{N}(i\hbar\nabla_{x^{\ell}}+A(x^{\ell}))^{2} \label{def of Nkinetic part}   
\end{align} and 
\begin{align}
\mathscr{V}^{N}\coloneqq \frac{1}{2N}\sum_{\ell\neq m}V(x^{\ell}-x^{m}) \label{def of Ninter part}  
\end{align} we may write concisely  
\begin{align*}
\mathscr{H}_{\hbar,A}^{N}=\mathscr{K}_{\hbar}^{N}+\mathscr{V}^{N}.     
\end{align*}
\begin{thm}\label{N body mainthm}
Suppose that: 
\begin{itemize}
    \item $A:\mathbb{R}^{3}\rightarrow \mathbb{R}^{3}$ is a given vector field satisfying assumptions \textup{\textbf{A1}-\textbf{A2}}. 
    \item $R^{\mathrm{in}}_{\hbar, N}\in \mathcal{D}_{s}(\mathfrak{H}^{\otimes N})$ is such that 
    \begin{align*}
     \mathrm{tr}\left(\left(I+\mathscr{K}_{\hbar}^{N}\right)^{2}R_{\hbar,N}^{\mathrm{in}}\right)<\infty.    
    \end{align*}
    \item $(\rho^{\mathrm{in}},u^{\mathrm{in}})\in (H^{3}(\mathbb{R}^{3})\cap \mathcal{P}(\mathbb{R}^{3}))\times H^{4}(\mathbb{R}^{3})$.
\end{itemize}
Let $(\rho,u)$ be the unique solution to \eqref{Magnetized Euler Poisson velocity density intro} with initial data $(\rho^{\mathrm{in}},u^{\mathrm{in}})$ on $[0,T]$ (as guaranteed by Theorem \ref{well posedness of EPA}). Let $R_{\hbar,N}(t)$ be the unique solution to \eqref{von Neumman magnetic intro} with initial data $R^{\mathrm{in}}_{\hbar,N}$ (as guaranteed by Theorem \ref{well posedness of magnetic von Neumann}). Then, it holds that 
\begin{align}
\underset{t\in [0,T]}{\sup}\mathcal{E}_{\hbar,N}(t)\underset{\hbar \rightarrow 0}{\rightarrow}0 \ \mbox{provided}\ \mathcal{E}_{\hbar,N}(0)\underset{\hbar \rightarrow 0}{\rightarrow}0. \label{N body N body energy convergence}   \end{align}
Consequently, it holds that 
\begin{align}
\underset{t\in [0,T]}{\sup}\left\Vert (\rho_{\hbar,N:1}-\rho)(t,\cdot)\right\Vert_{H^{-1}(\mathbb{R}^{3})}\underset{\hbar +\frac{1}{N}\rightarrow 0}{\rightarrow}0 \ \mbox{and}\ \underset{t\in [0,T]}{\sup}\left\Vert (J_{\hbar,N:1}-\rho u)(t,\cdot)\right\Vert_{W^{-1,\infty}(\mathbb{R}^{3};\mathbb{R}^{3})}\underset{\hbar+\frac{1}{N}\rightarrow 0}{\rightarrow}0.     \label{N body final convergence}  
\end{align}
\end{thm}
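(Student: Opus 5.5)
We follow the strategy of \cite{golse2022mean}, adapted to the magnetized kinetic operator. The goal is a Gr\"onwall inequality of the form
\begin{align*}
\mathcal{E}_{\hbar,N}(t)\leq \mathcal{E}_{\hbar,N}(0)+C_{T}\int_{0}^{t}\mathcal{E}_{\hbar,N}(s)\,\dd s+C_{T}\Big(\hbar+\frac{1}{N^{\theta}}\Big)
\end{align*}
for some $\theta>0$, with $C_T$ depending on $\sup_{[0,T]}\big(\Vert\nabla u\Vert_{W^{1,\infty}}+\Vert\rho\Vert_{L^\infty}+\Vert \mathbf{J}\Vert_{L^{\infty}}\big)$. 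This is finite by Theorem \ref{well posedness of EPA}, Sobolev embedding, and \textbf{A2}.

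\textbf{Expanding the kinetic part.} Write $P^{\ell}_A=i\hbar\nabla_{x^\ell}+A(x^\ell)$. We expand $(P_A^\ell+u^\ell)^2=(P_A^\ell)^2+P_A^\ell\vee u^\ell+|u^\ell|^2$ and rewrite $\mathcal{E}_{\hbar,N}$ as:
\begin{itemize}
\item the total energy $\frac{2}{N}\mathrm{tr}(\mathscr{H}^N_{\hbar,A}R_{\hbar,N})$, which is conserved because it commutes with the von Neumann flow given by Theorem \ref{well posedness of magnetic von Neumann};
\item plus $\frac{1}{N}\sum_\ell\mathrm{tr}\big((P^\ell_A\vee u^\ell+|u^\ell|^2)R_{\hbar,N}\big)$;
\item plus the interaction cross terms $-2\int V\ast\rho\,\rho_{\hbar,N:1}+\int V\ast\rho\,\rho$.
\end{itemize}
Differentiating in time, the linear terms in $R$ are computed via $i\hbar\frac{\dd}{\dd t}\mathrm{tr}(BR)=\mathrm{tr}([B,\mathscr{H}]R)+i\hbar\,\mathrm{tr}(\partial_tB\,R)$. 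The key commutator identity is
\begin{align*}
\tfrac{i}{\hbar}\big[\tfrac12 (P_A)^2,\,P_A\vee u\big],
\end{align*}
which produces $-\sum_{j,k}(P_A)_j\vee(\partial_j u_k\vee(P_A)_k)$-type terms together with \emph{magnetic} terms coming from $[P_{A,j},P_{A,k}]=i\hbar(\partial_jA_k-\partial_kA_j)=i\hbar\mathbf{J}_{kj}$. These magnetic terms combine with the $u\mathbf{J}$ term in the equation $\partial_tu+uD_xu+u\mathbf{J}+\nabla V\ast\rho=0$.

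\textbf{Absorbing the magnetic terms.} We then substitute $\partial_t u$ from \eqref{Magnetized Euler Poisson velocity density intro} and complete squares in $(P_A^\ell+u^\ell)$, exactly as in the nonmagnetic case. The expected outcome is
\begin{align*}
\frac{\dd}{\dd t}\mathcal{E}_{\hbar,N}=-\frac{2}{N}\sum_\ell\mathrm{tr}\big((P^\ell_A+u^\ell)\cdot(\nabla u)^\ell_{\mathrm{sym}}(P^\ell_A+u^\ell)R\big)+(\text{magnetic remainder})+\hbar^2\,\mathrm{tr}(\Delta\mathrm{div}\,u\ldots)+\text{interaction terms}.
\end{align*}
The magnetic remainder should take the form $\frac1N\sum_\ell\mathrm{tr}\big((P_A^\ell+u^\ell)\cdot\mathbf{J}^\ell(P_A^\ell+u^\ell)R\big)$. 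Since $\mathbf{J}$ is antisymmetric, this quadratic form vanishes up to symmetrization commutators of size $O(\hbar\Vert\nabla\mathbf{J}\Vert_\infty)$ times the kinetic energy. Checking this cancellation precisely (the Lorentz force does no work) is the genuinely new computation. The first term is bounded by $\Vert\nabla u\Vert_\infty$ times the kinetic part of $\mathcal{E}_{\hbar,N}$, and the $\hbar^2$ term by $\hbar^2\Vert u\Vert_{W^{3,\infty}}$. Here the \emph{a priori} bound on $\mathrm{tr}(\mathscr{K}^N R)/N$ from energy conservation, together with the finiteness of $\mathrm{tr}((I+\mathscr{K}^N_\hbar)^2R^{\mathrm{in}})$ (needed to justify differentiation), is used.

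\textbf{Interaction terms (main obstacle).} The interaction terms collect into
\begin{align*}
\int_{\mathbb{R}^{3N}}\int_{\Delta^c}(u(x)-u(y))\cdot\nabla V(x-y)(\mu_{X^N}-\rho)^{\otimes2}(\dd x\dd y)\,\rho_{\hbar,N}(\dd X^N),
\end{align*}
plus a cross term pairing the current defect $J_{\hbar,N:1}-\rho_{\hbar,N:1}u$ with $\nabla V\ast(\rho_{\hbar,N:1}-\rho)$. This cross term cancels against the corresponding term from the kinetic part, since $A$ enters the current definition \eqref{defintion of N body current} consistently. The remaining term is controlled pointwise in $X^N$ by Serfaty's commutator estimate \cite{Serfaty2020MeanField}: it is at most $C\Vert\nabla u\Vert_{W^{1,\infty}}$ times the modulated interaction energy plus $CN^{-\theta}$, with $\rho\in L^\infty$. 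Integrating against $\rho_{\hbar,N}$ and using the positivity provided by Lemma \ref{reminder of positivity} yields the Gr\"onwall inequality, and hence \eqref{N body N body energy convergence}.

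\textbf{Convergence of density and current.} For \eqref{N body final convergence}, the modulated Coulomb energy controls $\Vert\rho_{\hbar,N:1}-\rho\Vert_{H^{-1}}$ up to $N^{-\theta}$, after smearing the empirical measure at scale $N^{-1/3}$. For a test field $a$, we have
\begin{align*}
\int a\cdot(J_{\hbar,N:1}-\rho u)=-\frac{1}{2N}\sum_\ell\mathrm{tr}\big((P_A^\ell+u^\ell)\vee a^\ell R\big)+\int a\cdot u(\rho_{\hbar,N:1}-\rho).
\end{align*}
Cauchy--Schwarz bounds the first term by $\Vert a\Vert_\infty\,\mathcal{E}_{\hbar,N}^{1/2}$, and the second term is handled by the $H^{-1}$ bound.
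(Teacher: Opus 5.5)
Your overall architecture is the paper's: subtract the conserved energy $\frac{2}{N}\mathrm{tr}(\mathscr{H}^{N}_{\hbar,A}R_{\hbar,N})$, differentiate the rest via commutators, substitute the Euler equation, handle the interaction part with Proposition~\ref{commutator estimate} and Lemma~\ref{reminder of positivity}, and close by Gr\"onwall. However, one step does not close under the stated hypotheses. You rearrange the kinetic dissipation into the Golse--Paul form $-\frac{2}{N}\sum_\ell\mathrm{tr}\big(P\cdot(\nabla u)_{\mathrm{sym}}P\,R\big)$ plus $\hbar^{2}\mathrm{tr}(\Delta\,\mathrm{div}\,u\cdots)$, and bound the latter by $\hbar^2\Vert u\Vert_{W^{3,\infty}}$. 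Theorem~\ref{well posedness of EPA} only gives $u\in L^{\infty}([0,T];H^{4}(\mathbb{R}^{3}))$, and $H^{4}(\mathbb{R}^{3})$ embeds into $W^{2,\infty}$ but not into $W^{3,\infty}$; your own $C_T$ contains only $\Vert\nabla u\Vert_{W^{1,\infty}}$. The paper avoids the extra derivative by not symmetrizing. It expands $P_j\vee(P_k\vee\partial_{x_k}u_j)$ into the four products \eqref{formula for anticommutator of Pj} and commutes $P_k$ through $\partial_{x_k}u_j$, which produces $i\hbar\Delta_x u_j$. It then uses $\langle P_j\psi\vert i\hbar\Delta_x u_j\vert\psi\rangle\le\Vert P_j\psi\Vert_2^2+\hbar^2\Vert\Delta_x u\Vert_\infty^2$, so only $\Vert D_xu\Vert_\infty$ and $\Vert\Delta_xu\Vert_\infty$ enter. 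Alternatively, you can integrate your $\hbar^{2}\int\Delta\,\mathrm{div}\,u\,\rho_{\hbar,N:1}$ by parts once, using $\hbar\nabla_x\vert\psi\vert^2=2\,\mathrm{Im}\big(\overline{\psi}(\Pi+u)\psi\big)$. This costs only $O\big(\hbar\Vert D_x^2u\Vert_\infty(1+\mathcal{K}_{\hbar,N})\big)$, which is compatible with your $C_T\hbar$ error.

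The second issue is the magnetic cancellation. It is the genuinely new point, and you only assert it (``should take the form''). Moreover, your heuristic of a quadratic form in $P$ vanishing up to $O(\hbar\Vert\nabla\mathbf{J}\Vert_\infty)$ is not what happens. After energy conservation, the surviving magnetic terms are at most linear in $\Pi$. One is $-\sum_j(u_j+\Pi_j)\vee(u\mathbf{J})_j$, from substituting the Euler equation. The other is $\frac12\sum_{j,k}(\Pi_k\vee\mathbf{J}_{jk})\vee u_j$, from $\frac{i}{\hbar}\big[\frac12\sum_k\Pi_k^2,\Pi_j\big]$. Each has trace that is in general of size $\Vert u\mathbf{J}\Vert_\infty\mathcal{K}_{\hbar,N}^{1/2}$ (essentially $u\mathbf{J}$ paired with the current defect). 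Gr\"onwall cannot absorb a term of that size, so the two must cancel exactly. They do, by Lemma~\ref{cancellation lemma}: use the antisymmetry of $\mathbf{J}$ and $[\mathbf{J}_{kj},[u_k,\Pi_j]]=0$, which holds because $[u_k,\Pi_j]=-i\hbar\partial_{x_j}u_k$ is a multiplication operator. Signs matter here. With $\mathbf{J}=D_xA-D_x^TA$ one has $[\Pi_j,\Pi_k]=i\hbar(\partial_{x_j}A_k-\partial_{x_k}A_j)=i\hbar\mathbf{J}_{jk}$, not $i\hbar\mathbf{J}_{kj}$ as you wrote. With your identification the two terms would add instead of cancelling, and the Gr\"onwall argument would break. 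This computation has to be carried out explicitly for the proof to be complete.
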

The inclusion of a magnetic field necessitates a modification of the argument in \cite{golse2022mean} from several viewpoints: first, we need to explain how to formally obtain \eqref{Magnetized Euler Poisson velocity density intro} from the magnetized Vlasov-Poisson equation. Next, the local well posedness theory of \eqref{Magnetized Euler Poisson velocity density intro} needs to be adapted to the magnetized settings—this will be the content of section \S\ref{sec formal derivation}. The (global) well posedness of \eqref{SP equation intro} and \eqref{von Neumman magnetic intro} also requires adjustments, and has been  treated to a large extent in \cite{luhrmann2012mean}—we will review and complement the relevant results in \S\ref{Magnetized SP AND VP SEC} (see also \cite{cazenave2003semilinear, yajima1991schrodinger} for related results on the well-posedness of magnetized Schr\"odinger equations). Perhaps most importantly, one has to correctly identify the cancellations of the magnetic terms in the calculation of the time derivative of $\mathcal{E}_{\hbar}(t)$ and $\mathcal{E}_{\hbar,N}(t)$. These calculations will be performed in \S\ref{semiclassical limit section} and \S\ref{Mean-field semiclassical limit sec}. Finally, admissible initial data, i.e. wave functions $\psi^{\mathrm{in}}_{\hbar}$ and density operators $R^{\mathrm{in}}_{\hbar,N}$ witnessing the convergence $\mathcal{E}_{\hbar}(0)\underset{\hbar\rightarrow 0}{\rightarrow} 0$ and $\mathcal{E}_{\hbar,N}(0)\underset{\hbar+\frac{1}{N}\rightarrow 0}{\rightarrow} 0$, will be constructed in \S\ref{initial data sec}. From a more conceptual point of view, we are hopeful that the derivations offered in the present work may pave the path to more ambitious problems on magnetized mean-field semiclassical limits, most notably the derivation of monokinetic PDEs with self-consistent magnetic fields from appropriate quantum many-body dynamics.
\section{The magnetized Schr\"odinger-Poisson equation and magnetized von-Neumann equation}\label{Magnetized SP AND VP SEC}
We start by recalling the global well-posedness of \eqref{SP equation intro}. Denote $\Pi_{j}\coloneqq i\hbar\partial_{x_{j}}+A_{j}$ and define the $k$-th order magnetic Sobolev space ($k\in \mathbb{N}$) by 
\begin{align*}
H^{k}_{A,\hbar}(\mathbb{R}^{d})\coloneqq \left\{\psi\in L^{2}(\mathbb{R}^{d};\mathbb{C})\vert \left\Vert \Pi_{1}^{\alpha_{1}}\dots \Pi_{d}^{\alpha_{d}}\psi\right\Vert_{2}<\infty \ \mbox{for all}\ \alpha \in \mathbb{N}_{0}^{d}\ \mbox{with}\ \left\vert \alpha\right\vert\leq k \right\}.    
\end{align*}
We equip $H^{k}_{A,\hbar}(\mathbb{R}^{d})$ with the norm  
\begin{align*}
\left\Vert \psi\right\Vert_{H^{k}_{A,\hbar}}^{2}\coloneqq \sum_{\left\vert \alpha\right\vert\leq k }\left\Vert \Pi_{1}^{\alpha_{1}}\dots \Pi_{d}^{\alpha_{d}}\right\Vert_{2}^{2}.       
\end{align*}
\begin{thm}\textup{(Proposition 2.1, \cite{luhrmann2012mean})}\label{Magnetized SP well posed}
Suppose that: 
\begin{itemize}
    \item $A$ satisfies assumptions \textup{\textbf{A1}-\textbf{A2}}.  
    \item $V\in L^{\frac{3}{2}}(\mathbb{R}^{3})+L^{\infty}(\mathbb{R}^{3})$ is real valued and even. 
    \item $\psi^{\mathrm{in}}_{\hbar}\in H^{2}_{A,\hbar}(\mathbb{R}^{3})$ is such that $\int_{\mathbb{R}^{3}}\left\vert \psi^{\mathrm{in}}_{\hbar}\right\vert^{2}\ \dd x=1$.
\end{itemize}
Then, there exist a unique solution $\psi_{\hbar}\in C(\mathbb{R}_{+};H^{2}_{A,\hbar}(\mathbb{R}^{3}))\cap C^{1}(\mathbb{R}_{+};H^{-1}(\mathbb{R}^{3}))$ to \eqref{SP equation intro} with initial data $\psi^{\mathrm{in}}_{\hbar}$. Moreover, conservation of energy holds, i.e. 
\begin{align*}
\mathcal{F}_{\hbar}(t)=\mathcal{F}_{\hbar}(0) \ \mbox{for all}\ t\in \mathbb{R}_{+}.    
\end{align*}
\end{thm}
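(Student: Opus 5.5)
Since this is Proposition 2.1 of \cite{luhrmann2012mean}, the plan is to rebuild its proof along standard semilinear lines: a contraction argument in the magnetic space $H^{2}_{A,\hbar}$, globalised by conservation of mass and energy. The first step is the linear theory. Under \textbf{A1}--\textbf{A2} the magnetic Laplacian $H_{A}\coloneqq\frac12\sum_{j}\Pi_{j}^{2}$ is essentially self-adjoint on $C_{c}^{\infty}(\mathbb{R}^{3})$ and nonnegative; this is the Leinfelder--Simader theory, since $A\in L^{4}_{\mathrm{loc}}$ and $\mathrm{div}_{x}A=0$. Stone's theorem then gives a unitary group $e^{-itH_{A}/\hbar}$ that preserves $D(H_{A})$. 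Next we show that $D(H_{A})=H^{2}_{A,\hbar}$ with equivalent norms. For this, expand $\Pi_{j}\Pi_{k}$ and use $[\Pi_{j},\Pi_{k}]=i\hbar(\partial_{j}A_{k}-\partial_{k}A_{j})$, which is bounded by \textbf{A2}. An integration by parts then controls $\sum_{j,k}\|\Pi_{j}\Pi_{k}\psi\|_{2}^{2}$ by $C(\|H_{A}\psi\|_{2}^{2}+\|\psi\|_{2}^{2})$.

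Second, local well-posedness. We write the Duhamel formula
\begin{align*}
\psi_{\hbar}(t)=e^{-itH_{A}/\hbar}\psi^{\mathrm{in}}_{\hbar}-\frac{i}{\hbar}\int_{0}^{t}e^{-i(t-s)H_{A}/\hbar}\big(V\ast|\psi_{\hbar}|^{2}\big)\psi_{\hbar}(s)\,\dd s
\end{align*}
and run a fixed point in $C([0,T_{0}];H^{2}_{A,\hbar})$. The key claim is that $F(\psi)\coloneqq(V\ast|\psi|^{2})\psi$ is locally Lipschitz on $H^{2}_{A,\hbar}$. The diamagnetic inequality $|\nabla|\psi||\le\hbar^{-1}|\Pi\psi|$, together with the norm equivalence above, lets us place $\psi$ in $H^{1}$ (indeed $|\psi|\in H^{2}$-type bounds and $\psi\in L^{\infty}$ by Sobolev). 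Splitting $V=V_{1}+V_{2}\in L^{3/2}+L^{\infty}$, Young and Hardy--Littlewood--Sobolev give $V\ast|\psi|^{2}\in L^{\infty}$, and the same holds for its gradient paired with $\psi$. For the Coulomb potential we use $\nabla^{2}(V\ast\rho)\sim$ Riesz transforms of $\rho$, which lie in $L^{2}$. Leibniz rules for $\Pi_{j}$ acting on a product $f\psi$ with $f$ real give $\Pi_{j}(f\psi)=f\Pi_{j}\psi+i\hbar(\partial_{j}f)\psi$, and these close the estimate. Uniqueness and the $C^{1}(H^{-1})$ regularity then follow from the equation itself. This is where I expect the main obstacle: making the second-order nonlinear estimate rigorous for the general class $L^{3/2}+L^{\infty}$. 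A cleaner alternative, if needed, is to prove first the $H^{1}_{A}$ theory via the Lipschitz estimate in $H^{1}_{A,\hbar}$, and then propagate $H^{2}$ regularity by differentiating in time, i.e. by estimating $\|\partial_{t}\psi\|_{2}$, which is equivalent to $\|H_{A}\psi\|_{2}$ modulo lower-order terms.

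Third, globalisation. The local existence time depends only on the $H^{1}_{A}$ norm once $H^{2}$ growth is shown to be at most exponential, which follows by a Gr\"onwall argument on $\|\partial_{t}\psi\|_{2}$. Mass conservation comes from $\mathrm{Im}$ of the equation paired with $\psi$. Energy conservation \eqref{Conservation of energy} is justified for $H^{2}_{A}$ solutions by pairing the equation with $\partial_{t}\psi$, using self-adjointness of $H_{A}$ and the fact that $V$ is even. Since the Coulomb energy is nonnegative, $\|\Pi\psi\|_{2}^{2}\le\mathcal{F}_{\hbar}(0)$ for all times. Therefore the $H^{1}_{A}$ norm stays bounded, no blow-up can occur, and the solution is global on $\mathbb{R}_{+}$.
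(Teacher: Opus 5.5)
Your argument is sound in outline, but it is not the paper's route. The paper gives no proof of its own: it quotes Proposition 2.1 of \cite{luhrmann2012mean} at $\hbar=1$ and passes to general $\hbar$ by rescaling. The hypotheses \textbf{A1}--\textbf{A2} there are of the type required by Yajima's magnetic propagator estimates \cite{yajima1991schrodinger}.

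You instead run the Picard iteration directly in $D(H_{A})\simeq H^{2}_{A,\hbar}$, where no dispersive or Strichartz estimate is needed. The group $e^{-itH_{A}/\hbar}$ is unitary and commutes with $H_{A}$, hence is an isometry for the graph norm. The identification of $D(H_{A})$ with $H^{2}_{A,\hbar}$ rests only on boundedness of $[\Pi_{j},\Pi_{k}]=i\hbar(\partial_{x_{j}}A_{k}-\partial_{x_{k}}A_{j})$. For $V\in L^{3/2}+L^{\infty}$ the Hartree nonlinearity is a bounded real-trilinear map on $H^{2}_{A,\hbar}$. This buys a self-contained proof that uses from \textbf{A1}--\textbf{A2} only smoothness of $A$ and boundedness of $\mathrm{curl}_{x}A$. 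It also works at each fixed $\hbar$, so no reduction to $\hbar=1$ is needed. That reduction must rescale $A$ and $V$ as well: $s=\hbar t$ turns \eqref{SP equation intro} into the $\hbar=1$ equation with $A/\hbar$ and $\hbar^{-2}V$, both still admissible. The citation buys brevity.

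Three statements in your write-up should be replaced, though none affects the structure.

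\textbf{First}, the diamagnetic inequality does not put $\psi$ in $H^{1}$, nor $|\psi|$ in $H^{2}$. For unbounded $A$, which the paper wants to allow, $H^{1}_{A,\hbar}\not\subset H^{1}$, and $|\psi|$ has no second derivatives in $L^{2}$ in general. What you do have is $|\psi|,|\Pi_{j}\psi|\in H^{1}\subset L^{6}$ (apply the inequality to $\psi$ and to $\Pi_{j}\psi\in H^{1}_{A,\hbar}$), together with the gauge-covariant rule
\[
\partial_{x_{j}}\big(\phi_{1}\overline{\phi_{2}}\big)=\frac{1}{i\hbar}\big(\overline{\phi_{2}}\,\Pi_{j}\phi_{1}-\phi_{1}\,\overline{\Pi_{j}\phi_{2}}\big).
\]
This expresses $\nabla_{x}^{2}|\psi|^{2}$ through $\overline{\psi}\,\Pi_{j}\Pi_{k}\psi$ and $\overline{\Pi_{j}\psi}\,\Pi_{k}\psi$. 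Combine it with Young's inequality for $V_{1}\in L^{3/2}$ ($L^{3/2}\ast L^{3/2}\subset L^{3}$, $L^{3/2}\ast L^{3}\subset L^{\infty}$) and the trivial bound for $V_{2}\in L^{\infty}$. This controls every term of $\Pi_{j}\Pi_{k}\big((V\ast|\psi|^{2})\psi\big)$ for the whole class $L^{3/2}+L^{\infty}$. So the obstacle you anticipate does not arise, and neither Riesz transforms nor an $L^{\infty}$ bound on $\psi$ are needed. Such a bound does hold, but it comes from $|(H_{A}+1)^{-1}f|\leq(1-\frac{\hbar^{2}}{2}\Delta_{x})^{-1}|f|$, not from Sobolev.

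\textbf{Second}, the existence time of an $H^{2}_{A,\hbar}$ contraction depends on the $H^{2}_{A,\hbar}$ norm, not the $H^{1}_{A,\hbar}$ norm. What globalises is the blow-up alternative together with the tame bound $\|(V\ast|\psi|^{2})\psi\|_{H^{2}_{A,\hbar}}\leq C(\|\psi\|_{H^{1}_{A,\hbar}})\|\psi\|_{H^{2}_{A,\hbar}}$. The same estimates give this bound once the quadratic term in $\Pi\psi$ is handled via $\|\Pi_{j}\psi\|_{3}^{2}\leq\|\Pi_{j}\psi\|_{2}\|\Pi_{j}\psi\|_{6}$. Insert it into Duhamel's formula and close with Gr\"onwall. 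Your $\|\partial_{t}\psi\|_{2}$ variant also works, but it needs a difference-quotient justification.

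\textbf{Third}, for a general real even $V\in L^{3/2}+L^{\infty}$, as in the statement, the interaction energy can be negative. Split $V=V_{1}+V_{2}$ with $\|V_{1}\|_{3/2}$ small and use
\[
\Big|\int_{\mathbb{R}^{3}}(V_{1}\ast|\psi|^{2})|\psi|^{2}\,\dd x\Big|\leq C\hbar^{-2}\|V_{1}\|_{3/2}\|\psi\|_{2}^{2}\|\Pi\psi\|_{2}^{2}
\]
to recover the $H^{1}_{A,\hbar}$ bound from conservation of mass and energy. For the Coulomb potential your positivity argument is fine as it stands.
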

\begin{rem}
It is worth mentioning that Theorem \ref{Magnetized SP well posed} was stated and proved in \cite{luhrmann2012mean} specifically for $\hbar=1$. However, note that by direct calculation $\psi$ is a solution to $i\partial_{t}\psi=\frac{1}{2}(i\nabla_{x}+A)^{2}\psi+V\ast \left\vert \psi\right\vert^{2}\psi$ iff $\psi_{\hbar}=\psi(\hbar^{-2}t,\hbar^{-1}x)$ is a solution to \eqref{SP equation intro}. Therefore, the global well-posedness  of \eqref{SP equation intro} for arbitrary $\hbar$ follows immediately from the case $\hbar=1$, and vice versa. Note moreover that the magnetic Sobolev space $H^{k}_{A,\hbar}(\mathbb{R}^{d})$  does not coincide with the usual Sobolev space $H^{k}(\mathbb{R}^{d})$, unless one assumes further that $A$ is bounded. We avoid  imposing boundedness on $A$ in order to be able to cover the case of constant magnetic fields. This is the reason why we introduce the spaces $H^{k}_{A,\hbar}(\mathbb{R}^{d})$.        
\end{rem}
The well-posedness of the abstract ODE \eqref{von Neumman magnetic intro} is based on Kato's perturbation theory, recalled for convenience: 
\begin{thm}[\cite{teschl2014mathematical}, Theorem 6.4]
\label{Kato's theorem}
Let $T,\mathfrak{D}(T)\subset\mathfrak{H}$ be an {\rm(}essentially{\rm)}
self-adjoint operator and $S,\mathfrak{D}(S)\subset\mathfrak{H}$ a symmetric
operator such that $\mathfrak{D}(T)\subset \mathfrak{D}(S)$. 
Suppose that there exist $0<a<1$ and $b>0$
such that, for each $\varphi\in \mathfrak{D}(T),$
\begin{equation}
\|S\varphi\|^{2}\leq a\|T\varphi\|^{2}+b\|\varphi\|^{2}.\label{eq:-25-1}
\end{equation}
Then $T+S$  is {\rm(}essentially{\rm)} self-adjoint and $\mathfrak{D}(T+S)=\mathfrak{D}(T)$. In the case
when $T$ is essentially self-adjoint, $\mathfrak{D}(\overline{T})\subset \mathfrak{D}(\overline{S})$
and $\overline{T+S}=\overline{T}+\overline{S}$, 
where $\overline{T}$ and $\overline{S}$ stand for the closures of $T$ and $S$ respectively.
\end{thm}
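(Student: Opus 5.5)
We would prove this by the classical Kato--Rellich argument, first for self-adjoint $T$ and then reducing the essentially self-adjoint case to it by taking closures. Throughout we use the elementary identity, valid for symmetric $T$, real $\lambda\neq 0$ and $\varphi\in\mathfrak{D}(T)$,
\begin{align*}
\|(T+i\lambda)\varphi\|^{2}=\|T\varphi\|^{2}+\lambda^{2}\|\varphi\|^{2}.
\end{align*}

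\textbf{Step 1: $T+S$ is symmetric and closed on $\mathfrak{D}(T)$.} Symmetry of $T+S$ on $\mathfrak{D}(T)\subset\mathfrak{D}(S)$ is immediate. From \eqref{eq:-25-1} we get $\|S\varphi\|\le \sqrt a\,\|T\varphi\|+\sqrt b\,\|\varphi\|$ with $\sqrt a<1$. Hence
\begin{align*}
(1-\sqrt a)\|T\varphi\|-\sqrt b\,\|\varphi\|\le\|(T+S)\varphi\|\le(1+\sqrt a)\|T\varphi\|+\sqrt b\,\|\varphi\|.
\end{align*}
So the graph norms of $T$ and $T+S$ are equivalent on $\mathfrak{D}(T)$. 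Since $T$ is closed, $T+S$ is closed.

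\textbf{Step 2: the range condition (main step).} For $\lambda>0$ we factor
\begin{align*}
T+S\pm i\lambda=\bigl(I+S(T\pm i\lambda)^{-1}\bigr)(T\pm i\lambda),
\end{align*}
where $T\pm i\lambda:\mathfrak{D}(T)\to\mathfrak{H}$ is bijective because $T$ is self-adjoint. Put $\psi=(T\pm i\lambda)^{-1}\varphi$. The identity above gives $\|T\psi\|\le\|\varphi\|$ and $\|\psi\|\le\lambda^{-1}\|\varphi\|$, so \eqref{eq:-25-1} yields
\begin{align*}
\|S(T\pm i\lambda)^{-1}\varphi\|^{2}\le\left(a+\frac{b}{\lambda^{2}}\right)\|\varphi\|^{2}.
\end{align*}
Choose $\lambda$ so large that $a+b\lambda^{-2}<1$. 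Then $I+S(T\pm i\lambda)^{-1}$ is invertible by a Neumann series, and therefore $\mathrm{Ran}(T+S\pm i\lambda)=\mathfrak{H}$. The basic criterion for self-adjointness (a closed symmetric operator $B$ with $\mathrm{Ran}(B\pm i\lambda)=\mathfrak{H}$ for some $\lambda>0$ is self-adjoint) then shows that $T+S$ is self-adjoint with domain $\mathfrak{D}(T)$. This is the crux: it is exactly where the hypothesis $a<1$ enters, through the contraction estimate.

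\textbf{Step 3: the essentially self-adjoint case.} Since $S$ is symmetric, it is closable. Take $\varphi\in\mathfrak{D}(\overline T)$ and $\varphi_n\in\mathfrak{D}(T)$ with $\varphi_n\to\varphi$ and $T\varphi_n\to\overline T\varphi$. By \eqref{eq:-25-1} the sequence $(S\varphi_n)$ is Cauchy. Hence $\varphi\in\mathfrak{D}(\overline S)$, and the inequality \eqref{eq:-25-1} persists for the pair $(\overline T,\overline S)$. Step 2 applied to the self-adjoint $\overline T$ then shows that $\overline T+\overline S$ is self-adjoint on $\mathfrak{D}(\overline T)$, and in particular closed.

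It remains to identify this operator with $\overline{T+S}$. First, $\overline T+\overline S$ is a closed extension of $T+S$, so $\overline{T+S}\subset\overline T+\overline S$. Conversely, the approximating sequence above satisfies $(T+S)\varphi_n\to(\overline T+\overline S)\varphi$, so the graph of $T+S$ is dense in the graph of $\overline T+\overline S$. Thus $\overline{T+S}=\overline T+\overline S$. In particular $T+S$ is essentially self-adjoint, and $\mathfrak{D}(\overline T)\subset\mathfrak{D}(\overline S)$.
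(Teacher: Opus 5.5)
Your argument is correct. It is the standard Kato--Rellich proof: equivalence of graph norms gives closedness, the bound $\|S(T\pm i\lambda)^{-1}\|^{2}\le a+b\lambda^{-2}<1$ for large $\lambda$ lets a Neumann series invert $I+S(T\pm i\lambda)^{-1}$, the range criterion then gives self-adjointness, and the essentially self-adjoint case reduces to $\overline{T}$. You also correctly pass from the squared hypothesis to the linear relative bound $\sqrt{a}<1$. The paper does not prove this statement itself; it imports it from Teschl's Theorem 6.4, whose proof is this same argument.
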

\begin{thm}\label{well posedness of magnetic von Neumann}
Suppose that: 
\begin{itemize}
\item $A$ satisfies assumptions \textup{\textbf{A1}-\textbf{A2}}. 
\item $R^{\mathrm{in}}_{\hbar,N}\in \mathcal{D}_{s}(\mathfrak{H}^{\otimes N})$ is such that 
\begin{align*}
\mathrm{tr}\left((I+\mathscr{K}_{\hbar}^{N})^{2}R^{\mathrm{in}}_{\hbar,N}\right)<\infty.     
\end{align*}
\end{itemize}
Then there exist a unique solution $R_{\hbar,N}(t)\in \mathcal{D}_{s}(\mathfrak{H}^{\otimes N})$ to \eqref{von Neumman magnetic intro} such that 
\begin{align}
\underset{t\in \mathbb{R}_{+}}{\sup}\mathrm{tr}\left(\mathscr{H}_{\hbar,A}^{N}R_{\hbar,N}^{\mathrm{in}}(t)\right)<\infty. \label{trace of operator against Hamiltonian} 
\end{align}
\end{thm}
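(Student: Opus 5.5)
Our plan is to build the solution from the unitary group generated by a self-adjoint realization of $\mathscr{H}_{\hbar,A}^{N}$, namely $R_{\hbar,N}(t)=e^{-it\mathscr{H}_{\hbar,A}^{N}/\hbar}R^{\mathrm{in}}_{\hbar,N}e^{it\mathscr{H}_{\hbar,A}^{N}/\hbar}$.

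\emph{Step 1: self-adjointness of the kinetic part.} Under \textbf{A1}--\textbf{A2} the magnetic Laplacian $(i\hbar\nabla_{x}+A)^{2}$ is essentially self-adjoint on $C_{c}^{\infty}(\mathbb{R}^{3})$. This is a classical fact for smooth $A$ (Leinfelder--Simader), or it can be read off the one-body analysis in \cite{luhrmann2012mean}. Hence $\mathscr{K}_{\hbar}^{N}$, a sum of commuting nonnegative self-adjoint operators acting on different tensor factors, is self-adjoint and nonnegative on $\mathfrak{D}(\mathscr{K}_{\hbar}^{N})$, and $C_{c}^{\infty}(\mathbb{R}^{3N})$ is a core.

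\emph{Step 2: relative boundedness of $\mathscr{V}^{N}$, which is the main obstacle.} We must check the hypothesis \eqref{eq:-25-1} of Theorem \ref{Kato's theorem} with $T=\mathscr{K}_{\hbar}^{N}$, $S=\mathscr{V}^{N}$ and some $a<1$. Each pair term $V(x^{\ell}-x^{m})$ is Coulomb, so $V\in L^{2}+L^{\infty}$ in three dimensions. For the free Laplacian it is then $-\Delta$-bounded with relative bound $0$, via Sobolev embedding $H^{2}\subset L^{\infty}$ in the relative variable. To pass to the magnetic operator we use the diamagnetic inequality $|e^{-t(i\hbar\nabla+A)^{2}}\varphi|\le e^{t\hbar^{2}\Delta}|\varphi|$ pointwise. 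This gives the semigroup domination needed to transfer relative form/operator bound $0$ from the free Laplacian to the magnetic one; the Avron--Herbst--Simon argument handles $L^{2}+L^{\infty}$ potentials. If that route is delicate at operator level, we can instead use Kato's inequality to obtain $\|\,|\nabla|\varphi|\,\|\le\hbar^{-1}\|(i\hbar\nabla+A)\varphi\|$ together with Hardy's inequality. This yields the form bound, and we then upgrade using that $\mathscr{K}_{\hbar}^{N}$ controls each one-body magnetic Laplacian. Summing over the $N(N-1)/2$ pairs with arbitrary small constants gives $a<1$ (indeed arbitrarily small) and a finite $b=b(N,\hbar)$. Theorem \ref{Kato's theorem} then yields that $\mathscr{H}_{\hbar,A}^{N}$ is self-adjoint with $\mathfrak{D}(\mathscr{H}_{\hbar,A}^{N})=\mathfrak{D}(\mathscr{K}_{\hbar}^{N})$, and that $\|\mathscr{H}_{\hbar,A}^{N}\varphi\|\simeq\|(I+\mathscr{K}_{\hbar}^{N})\varphi\|$ up to constants.

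\emph{Step 3: existence, symmetry, and the energy bound.} By Stone's theorem, $U(t)=e^{-it\mathscr{H}_{\hbar,A}^{N}/\hbar}$ is a strongly continuous unitary group. The formula above therefore defines a density operator, which is self-adjoint, nonnegative and of trace one. It is symmetric because $U_{\sigma}$ commutes with $\mathscr{H}_{\hbar,A}^{N}$. The assumption $\mathrm{tr}((I+\mathscr{K}_{\hbar}^{N})^{2}R^{\mathrm{in}}_{\hbar,N})<\infty$ combined with Step 2 gives $\mathrm{tr}((\mathscr{H}_{\hbar,A}^{N})^{2}R^{\mathrm{in}})<\infty$. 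So in the spectral decomposition $R^{\mathrm{in}}=\sum_j\lambda_j|\varphi_j\rangle\langle\varphi_j|$ one has $\varphi_j\in\mathfrak{D}(\mathscr{H}_{\hbar,A}^{N})$, and $t\mapsto U(t)\varphi_j$ is $C^{1}$. Differentiating termwise, with the sum controlled by the trace bound, shows that $R_{\hbar,N}$ solves \eqref{von Neumman magnetic intro}, both in the strong sense on the domain and in the weak sense against compact operators. Since $U(t)$ commutes with $\mathscr{H}_{\hbar,A}^{N}$, the quantity $\mathrm{tr}(\mathscr{H}_{\hbar,A}^{N}R_{\hbar,N}(t))$ equals its initial value. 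That value is finite because $|\mathrm{tr}(\mathscr{H}R)|\le\mathrm{tr}((I+\mathscr{K})^{2}R)^{1/2}$ up to constants. This gives \eqref{trace of operator against Hamiltonian}.

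\emph{Step 4: uniqueness.} Let $R(t)$ be any solution in the stated class. Set $\tilde R(t)=U(t)^{*}R(t)U(t)$. Testing against $U(t)^{*}\varphi$ and $U(t)^{*}\chi$ with $\varphi,\chi\in\mathfrak{D}(\mathscr{H}_{\hbar,A}^{N})$ shows that $\frac{\dd}{\dd t}\langle\chi,\tilde R(t)\varphi\rangle=0$. Hence $\tilde R(t)=R^{\mathrm{in}}$ by density of the domain, and so $R(t)=U(t)R^{\mathrm{in}}U(t)^{*}$.
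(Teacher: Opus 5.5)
Your proposal is correct and follows the paper's architecture: self-adjointness of $\mathscr{K}_{\hbar}^{N}$ (the paper cites Helffer instead of Leinfelder--Simader), a Kato--Rellich bound for $\mathscr{V}^{N}$, Stone's theorem, conjugation of $R^{\mathrm{in}}_{\hbar,N}$ by the unitary group, and conservation of $\mathrm{tr}(\mathscr{H}_{\hbar,A}^{N}R_{\hbar,N}(t))$ by cyclicity, with the initial value controlled by $\mathrm{tr}((I+\mathscr{K}_{\hbar}^{N})^{2}R^{\mathrm{in}}_{\hbar,N})$.

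The real difference is your primary route to the relative bound. You split $V\in L^{2}+L^{\infty}$, use the free $N$-body Kato bound, and transfer it through the resolvent domination $|(\mathscr{K}_{\hbar}^{N}+E)^{-1}\varphi|\le(-\tfrac{\hbar^{2}}{2}\Delta+E)^{-1}|\varphi|$, which gives $\|V_{ij}(\mathscr{K}_{\hbar}^{N}+E)^{-1}\|\to0$. The paper does not use semigroup domination. It shifts $x^{i}\mapsto x^{i}+x^{j}$ and applies Hardy's inequality to $|V|^{2}=(16\pi^{2}|x|^{2})^{-1}$, together with the pointwise diamagnetic inequality $|\nabla|\psi||\le\hbar^{-1}|(i\hbar\nabla+A)\psi|$. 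This yields the explicit bound $\|V_{ij}\psi\|_{2}^{2}\le(4\pi^{2}\hbar^{2})^{-1}\|(i\hbar\nabla_{x^{i}}+A(x^{i}))\psi\|_{2}^{2}$. The right side is at most a multiple of $\langle\psi,\mathscr{K}_{\hbar}^{N}\psi\rangle\le\|\psi\|\,\|\mathscr{K}_{\hbar}^{N}\psi\|$, and Young's inequality finishes.

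That is exactly your fallback. Note that Hardy applied to $|V|^{2}$ already gives an operator bound against the quadratic form of $\mathscr{K}_{\hbar}^{N}$, so your ``upgrade'' is just this Cauchy--Schwarz/Young step.

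What each buys: the paper's route is elementary and quantitative but tied to the $|x|^{-1}$ singularity. Yours relies on the heavier diamagnetic semigroup theorem but covers any $V\in L^{2}+L^{\infty}(\mathbb{R}^{3})$.

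You also supply two things the paper leaves implicit: permutation symmetry of $R_{\hbar,N}(t)$, and an actual uniqueness argument via $U(t)^{*}R(t)U(t)$. Your formula $U(t)R^{\mathrm{in}}U(t)^{*}$ has the correct time orientation, whereas the paper's $U^{*}(t)R^{\mathrm{in}}U(t)$ solves the equation with the opposite sign. One small imprecision on your side: the graph-norm equivalence should read $\|\mathscr{H}_{\hbar,A}^{N}\varphi\|+\|\varphi\|\simeq\|(I+\mathscr{K}_{\hbar}^{N})\varphi\|$, though you only use the upper bound.
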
 
\begin{proof}
By 2.2 in \cite{helffer2006introduction} the operator $\mathscr{K}_{\hbar}^{N}$ is self-adjoint with domain 
\begin{align*}
\mathfrak{D}(\mathscr{K}_{\hbar}^{N})=\left\{\psi_{N}\in H^{1}_{A,\hbar}(\mathbb{R}^{3N})\vert \mathscr{K}_{\hbar}^{N}\psi_{N}\in L^{2}(\mathbb{R}^{3N})\right\}.    
\end{align*}
Denote by $\Delta_{A,x^{\ell}}$ the single body magnetic Laplacian defined by    $\Delta_{A,x^{\ell}}\coloneqq (i\hbar \nabla_{x^{\ell}}+A(x^{\ell}))^{2}$. 
Fix $1\leq i,j\leq N$ with $i<j$ and consider $V_{ij}\coloneqq V(x_{i}-x_{j})$, viewed as a multiplication operator on $L^{2}(\mathbb{R}^{3N})$. 
Given $\psi_{N}\in \mathfrak{D}(\mathscr{K}_{\hbar}^{N})$ we have 
\begin{align}
&\int_{\mathbb{R}^{3N}}\left\vert V_{ij}\psi_{N}\right\vert^{2}(x^{1},\dots,x^{N})\ \dd X^{N} =\int_{\mathbb{R}^{3N}}\left\vert V\right\vert^{2} (x_{i}-x_{j})\left\vert \psi_{N}\right\vert^{2}(x^{1},\dots,x^{N})\ \dd X^{N} \notag\\
&=\int_{\mathbb{R}^{3N}}\left\vert V\right\vert^{2}(x_{i})\left\vert \psi_{N}\right\vert^{2}(x^{1},\dots x^{i}+x^{j},\dots,x^{j},\dots,x^{N})\ \dd X^{N} \notag\\
&=\frac{1}{16\pi^{2}}\int_{\mathbb{R}^{3(N-1)}}\left(\int_{\mathbb{R}^{3}}\frac{\left\vert \psi_{N}\right\vert^{2}(x^{1},\dots,x^{i}+x^{j},\dots,x^{j},\dots,x^{N})}{\left\vert x^{i}\right\vert^{2}}\ \dd x^{i}\right)\dd x^{1}\dots \dd \widehat{x}^{i}\dots \dd x^{N}.  \label{Vij squared}
\end{align}
Recall that by the diamagnetic inequality (see for instance \cite{lieb2001analysis}) we have the pointwise inequality 
\begin{align}
\left\vert \nabla_{x} \left\vert \psi\right\vert \right\vert \leq \frac{1}{\hbar}\left\vert (i\hbar\nabla_{x}+A)\psi\right\vert.  \label{diamagnetic inequality}   
\end{align}
Applying the 3D Hardy inequality and the diamagnetic inequality \eqref{diamagnetic inequality} for the inner integral in the right-hand side of \eqref{Vij squared} we get 
\begin{align}
\int_{\mathbb{R}^{3}}\frac{\left\vert \psi_{N}\right\vert^{2}(x^{1},\dots,x^{i}+x^{j},\dots,x^{N})}{\left\vert x^{i}\right\vert^{2}}\ \dd x^{i}&\leq 4\int_{\mathbb{R}^{3}}\left\vert \nabla_{x^{i}}\left\vert \psi_{N}\right\vert\right\vert^{2}(x^{1},\dots,x^{i}+x^{j},\dots,x^{N})\ \dd x^{i} \notag\\
&=4\int_{\mathbb{R}^{3}}\left\vert \nabla_{x^{i}}\left\vert \psi_{N}\right\vert\right\vert^{2}(x^{1},\dots,x^{i},\dots,x^{N})\ \dd x^{i} \notag\\
&\leq \frac{4}{\hbar^{2}}\int_{\mathbb{R}^{3}}\left\vert (i\hbar\nabla_{x^{i}}+A(x^{i}))\psi_{N}\right\vert^{2}(x^{1},\dots,x^{N})\ \dd x^{i}.\label{Apllication of Hardy and diamagnetic}
\end{align}
Inserting \eqref{Apllication of Hardy and diamagnetic} inside \eqref{Vij squared} we obtain 
\begin{align*}
\int_{\mathbb{R}^{3N}}\left\vert V_{ij}\psi_{N}\right\vert^{2}(x^{1},\dots,x^{N})\ \dd X^{N}&\leq \frac{1}{4\hbar^{2}\pi^{2}}\int_{\mathbb{R}^{3N}}\left\vert (i\hbar\nabla_{x_{i}}+A(x^{i}))\psi_{N}\right\vert^{2}(x^{1},\dots,x^{N})\ \dd X^{N}\\
&\leq \frac{1}{4\hbar^{2}\pi^{2}} \sum_{\ell=1}^{N}\int_{\mathbb{R}^{3N}}\left\vert (i\hbar\nabla_{x^{\ell}}+A(x^{\ell}))\psi_{N}\right\vert^{2}(x^{1},\dots,x^{N})\ \dd X^{N}\\
&=-\frac{1}{4\hbar^{2}\pi^{2}}\sum_{\ell=1}^
{N}\int_{\mathbb{R}^{3N}}\overline{\psi_{N}}\Delta_{A,x^{\ell}}\psi_{N}(x^{1},\dots,x^{N})\ \dd X^{N}\\
&=-\frac{1}{4\hbar^{2}\pi^{2}}\int_{\mathbb{R}^{3N}}\overline{\psi_{N}}\mathscr{K}_{\hbar}^{N}\psi_{N}(x^{1},\dots,x^{N})\ \dd X^{N}. 
\end{align*}
In view of Young's inequality we deduce that for any arbitrarily small $\eta>0$ it holds that  
\begin{align}
\int_{\mathbb{R}^{3N}}\left\vert V_{ij}\psi_{N}\right\vert^{2}(x^{1},\dots,x^{N})\ \dd X^{N}\leq \frac{\eta}{4\hbar^{2}\pi^{2}}\left\Vert \mathscr{K}_{\hbar}^{N}\psi_{N}\right\Vert_{2}^{2}+\frac{1}{4\eta\hbar^{2}\pi^{2}}\left\Vert \psi_{N}\right\Vert_{2}^
{2}. \label{inf bound for Vij}
\end{align}
Now, note that by the Cauchy-Schwartz inequality we have  
\begin{align*}
\left\Vert \mathscr{V}^{N}\psi_{N}\right\Vert_{2}^{2} &=\int_{\mathbb{R}^{3N}}\left\vert \mathscr{V}^{N}\psi_{N}\right\vert^{2}(x_{1},\dots,x_{N})\ \dd X^{N}\\
&\leq \frac{(N-1)}{4N}\sum_{i\neq j}\int_{\mathbb{R}^{3N}}\left\vert  V_{ij}\psi_{N}\right\vert^{2}(x^{1},\dots,x^{N})\ \dd X^{N}.    \end{align*}
Consequently, in view of \eqref{inf bound for Vij} it follows that 
\begin{align*}
\left\Vert \mathscr{V}^{N}\psi_{N}\right\Vert_{2}^{2}\leq \frac{\eta (N-1)^{2}}{4\hbar^{2}\pi^{2}} \left\Vert \mathscr{K}_{\hbar}^{N}\psi_{N}\right\Vert_{2}^{2} +\frac{(N-1)^{2}}{4\eta\hbar^{2}\pi^{2}}\left\Vert \psi_{N}\right\Vert_{2}^{2}.     
\end{align*}
Thus, for 
$\eta=\eta(\hbar,N)$ sufficiently small we get 
\begin{align*}
\left\Vert \mathscr{V}^{N}\psi_{N}\right\Vert_{2}^{2}\leq a\left\Vert \mathscr{K}_{\hbar}^{N}\psi_{N}\right\Vert_{2}^{2}+b\left\Vert \psi_{N}\right\Vert_{2}^{2}   
\end{align*}
for some $0<a<1$ and some $b>0$. By Theorem \eqref{Kato's theorem}, we conclude that $\mathscr{H}_{\hbar,A}^{N}$ is self-adjoint on $\mathfrak{D}(\mathscr{K}_{\hbar}^{N})$. By Stone's theorem, $U_{\hbar, N}(t)\coloneqq e^{-\frac{it\mathscr{H}_{\hbar,A}^{N}}{\hbar}}$ is a strongly continuous one-parameter unitary group. The operator $R_{\hbar,N}(t)=U_{\hbar,N}^{\ast}(t)R^{\mathrm{in}}_{\hbar,N}U_{\hbar,N}(t)$ is the asserted solution.  We prove \eqref{trace of operator against Hamiltonian}. Using the cyclicity of the trace and that $U_{\hbar,N}$ is unitary and commutes with $\mathscr{H}_{\hbar,A}^{N}$ we see that  
\begin{align*}
\mathrm{tr}\left(\mathscr{H}_{\hbar,A}^{N}R_{\hbar,N}(t)\right)
=\mathrm{tr}\left(U_{\hbar,N}^{\ast}(t)\mathscr{H}_{\hbar,A}^{N}R^{\mathrm{in}}_{\hbar,N}U_{\hbar,N}(t)\right)=\mathrm{tr}\left(\mathscr{H}_{\hbar,A}^{N}R^{\mathrm{in}}_{\hbar,N}\right).     
\end{align*}
Let $\{\psi_{\ell}\}_{\ell=1}^{\infty}$ be the eigenfunction decomposition of $R^{\mathrm{in}}_{\hbar,N}$, i.e. $\{\psi_{\ell}\}_{\ell=1}^{\infty}\subset \mathfrak{H}^{\otimes N}$ is a complete system such that  
\begin{align*}
R_{\hbar,N}^{\mathrm{in}}=\sum_{\ell=1}^{\infty}\lambda_{\ell}\vert \psi_{\ell
}\rangle\vert \langle \psi_{\ell}\vert \qquad \mbox{with}\    \lambda_{\ell}\geq 0,\ \sum_{\ell=1}^{\infty}\lambda_{\ell}=1. \end{align*}
Then, we have 
\begin{align*}
\mathrm{tr}\left(\mathscr{H}_{\hbar,A}^{N}R^{\mathrm{in}}_{\hbar,N}\right)&=\sum_{\ell=1}^{N}\lambda_{\ell}\langle \psi_{\ell}\vert  \mathscr{K}_{\hbar}^{N}\vert\psi_{\ell}\rangle+\sum_{\ell=1}^{N}\lambda_{\ell}\langle \psi_{\ell}\vert \mathscr{V}^{N}\vert\psi_{\ell}\rangle\\
&\leq \sum_{\ell=1}^{N}\lambda_{\ell}(\left\Vert \psi_{\ell}\right\Vert^{2}_{2}+\left\Vert \mathscr{K}_{\hbar}^{N}\psi_{\ell}\right\Vert_{2}^{2})+\sum_{\ell=1}^{N}\lambda_{\ell}(\left\Vert \psi_{\ell}\right\Vert_{2}^{2}+\left\Vert \mathscr{V}^{N}\psi_{\ell}\right\Vert_{2}^{2}). 
\end{align*}
By what we proved, it holds that $\left\Vert \mathscr{V}^{N}\psi_{\ell}\right\Vert_{2}^{2}\leq \frac{1}{2}\left\Vert \mathscr{K}_{\hbar}^{N}\psi_{\ell}\right\Vert_{2}^{2}+2\left\Vert \psi_{\ell}\right\Vert_{2}^{2}$ and hence 
\begin{align*}
\mathrm{tr}\left(\mathscr{H}_{\hbar,A}^{N}R^{\mathrm{in}}_{\hbar,N}\right)\leq C\left(\sum_{\ell=1}^{\infty}\lambda_{\ell}(\left\Vert \psi_{\ell}\right\Vert_{2}^{2}+\left\Vert \mathscr{K}_{\hbar}^{N}\psi_{\ell}\right\Vert_{2}^{2})  \right)\leq C\mathrm{tr}\left((I+\mathscr{K}_{\hbar}^{N})^{2}R^{\mathrm{in}}_{\hbar,N}\right)<\infty    
\end{align*}
by assumption. 
\end{proof}
\begin{rem}
A particular corollary of Theorem \ref{well posedness of magnetic von Neumann} is that the modulated energy \eqref{Def of renormalized modulated energy} is well defined. This is evident upon noticing
$$\underset{t\in \mathbb{R}_{+}}{\sup}\mathrm{tr}(\mathscr{K}_{\hbar}^{N}R_{\hbar,N}(t))\leq \underset{t\in \mathbb{R}_{+}}{\sup}\mathrm{tr}\left(\mathscr{H}_{\hbar,A}^{N}R_{\hbar,N}(t)\right)<\infty$$
and
$$\underset{t\in \mathbb{R}_{+}}{\sup}\int_{\mathbb{R}^{3N}}\mathscr{V}^{N}\rho_{\hbar,N}(t,\dd X^{N})=\underset{t\in \mathbb{R}_{+}}{\sup}\mathrm{tr}\left(\mathscr{V}^{N} R_{\hbar,N}(t)\right)\leq \underset{t\in \mathbb{R}_{+}}{\sup}\mathrm{tr}(\mathscr{H}_{\hbar,A}^{N}R_{\hbar,N}(t))<\infty.$$
\end{rem}
\section{The magnetized Euler-Poisson equation}\label{sec formal derivation}
\subsection{Formal Derivation.} We start by discussing the formal derivation of the magnetized Euler-Poisson equation from the magnetized Vlasov-Poisson equation. Recall that we denote by $\left\{\cdot,\cdot\right\}$ the Poisson brackets defined by 
\begin{align*}
\left\{f,g\right\}=\nabla_{\xi}f\cdot \nabla_{x}g-\nabla_{x}f\cdot \nabla_{\xi}g.     
\end{align*}
Given a vector field $u=(u_{1},\dots,u_{d}):\mathbb{R}^{d}\rightarrow \mathbb{R}^{d}$ we denote by $D_{x}u$ the Jacobian of $u$ defined by $$D_{x}u\coloneqq \left(\frac{\partial u_{j}}{\partial x_{i}}\right)_{1\leq i,j\leq d}.$$ 
\begin{prop}
Suppose that: 
\begin{itemize}
    \item $\mathrm{div}_{x}A=0$
    \item $f$ is a smooth solution
    to the magnetized Vlasov-Poisson equation 
\begin{align} \tag{VPA}
\label{magnetized vlasov Poisson}
\partial_{t}f+\left\{\frac{1}{2}\left\vert \xi-A(x)\right\vert^{2}+V\ast \rho_{f},f\right\}=0,\  \rho_{f}(t,x)=\int_{\mathbb{R}^{3}}f(t,x,\xi)\ \dd \xi.     
\end{align}
\end{itemize}
Let $J_{A,f}:\mathbb{R}_
{+}\times \mathbb{R}^{3}\rightarrow \mathbb{R}^{3}$ be given by 
\begin{align}
 J_{A,f}(t,x)\coloneqq\int_{\mathbb{R}^{3}}(\xi-A(x))f(t,x,\xi)\ \dd  \xi.     
\end{align}
Then $(\rho_{f},J_{A,f})$ satisfies the following equations: 
\begin{align}
\begin{cases}
\begin{array}{lc}
\partial_{t}\rho_{f}+\mathrm{div}_{x}J_{A,f}=0\\
\partial_{t}J_{A,f}+\mathrm{div}_{x}\left(\int_{\mathbb{R}^{3}}(\xi-A)\otimes (\xi-A)f\ \dd \xi\right)+J_{A,f}\mathbf{J}+\rho_{f}\nabla_{x}V\ast \rho_{f}=0
\end{array}\end{cases} \label{equation for 0 and 1st moments}   
\end{align}
where $\mathbf{J}=D_{x}A-D_{x}^{T}A$. 
\end{prop}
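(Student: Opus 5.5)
The plan is to take the zeroth and first velocity moments of \eqref{magnetized vlasov Poisson} relative to the shifted velocity $\xi-A(x)$, integrating by parts in $x$ and $\xi$. I assume $f$ decays fast enough in $\xi$ that all boundary terms vanish.

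First I will make the Vlasov equation explicit. With $H=\frac12|\xi-A(x)|^2+V\ast\rho_f$ we have $\nabla_\xi H=\xi-A$. We also have $\partial_{x_i}H=-\sum_j(\xi-A)_j\partial_iA_j+\partial_iV\ast\rho_f$. Hence the equation reads
\begin{align*}
\partial_t f+(\xi-A)\cdot\nabla_x f+F\cdot\nabla_\xi f=0,\qquad F\coloneqq (D_xA)(\xi-A)-\nabla_x V\ast\rho_f,
\end{align*}
where $(D_xA)_{ij}=\partial_iA_j$. Two structural facts drive every cancellation, and both come from $\mathrm{div}_xA=0$:
\begin{align*}
\mathrm{div}_x(\xi-A)=0,\qquad \mathrm{div}_\xi F=\mathrm{tr}(D_xA)=\mathrm{div}_xA=0.
\end{align*}

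For the continuity equation I integrate the equation in $\xi$. The first fact gives $(\xi-A)\cdot\nabla_xf=\mathrm{div}_x((\xi-A)f)$, so the transport term integrates to $\mathrm{div}_xJ_{A,f}$. The force term integrates to $-\int f\,\mathrm{div}_\xi F\,\dd\xi=0$ by the second fact.

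For the momentum equation I multiply by $(\xi-A(x))_k$ and integrate in $\xi$. Since $A$ does not depend on $t$, the time derivative gives $\partial_tJ_{A,f,k}$. For the transport term I write
\begin{align*}
(\xi-A)_k(\xi-A)_j\partial_jf=\partial_j\big[(\xi-A)_k(\xi-A)_jf\big]-f\,\partial_j\big[(\xi-A)_k(\xi-A)_j\big].
\end{align*}
The last bracket equals $-(\partial_jA_k)(\xi-A)_j$, using $\sum_j\partial_jA_j=0$. This yields $\mathrm{div}_x\int(\xi-A)\otimes(\xi-A)f\,\dd\xi+\sum_j J_{A,f,j}\,\partial_jA_k$. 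For the force term, integration by parts in $\xi$ together with $\mathrm{div}_\xi F=0$ gives $-\int fF_k\,\dd\xi$. This equals $-\sum_j\partial_kA_j\,J_{A,f,j}+\rho_f\,\partial_kV\ast\rho_f$.

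Adding the two magnetic contributions gives $\sum_jJ_{A,f,j}(\partial_jA_k-\partial_kA_j)=(J_{A,f}\mathbf{J})_k$, with $J_{A,f}$ read as a row vector and $\mathbf{J}=D_xA-D_x^TA$. This yields \eqref{equation for 0 and 1st moments}.

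The only delicate point I expect is sign and index bookkeeping. Specifically, the symmetric parts $\partial_jA_k+\partial_kA_j$ must not survive. This depends on getting the sign of the $x$-integration by parts in the transport term right and on the convention $(D_xA)_{ij}=\partial_iA_j$. I will double-check it on a linear potential, i.e. a constant magnetic field, where the term must reduce to the Lorentz force.
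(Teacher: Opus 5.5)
Your proposal is correct and follows essentially the same route as the paper. You take the $\xi$-moments against $1$ and $\xi-A$, integrate by parts in $x$ and $\xi$, and use $\mathrm{div}_xA=0$ to drop the trace terms, so the transport term contributes $D_x^TA\,J_{A,f}$ and the magnetic force contributes $-D_xA\,J_{A,f}$, which combine into $J_{A,f}\mathbf{J}$. Packaging the force as $F$ with $\mathrm{div}_\xi F=\mathrm{div}_xA=0$ is only a tidier organization of that same computation, and your signs and index conventions check out.
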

\begin{proof}
Integrating in $\xi$ \eqref{magnetized vlasov Poisson} we get 
\begin{align*}
\partial_{t}\rho_{f}+\mathrm{div}_{x}J_{A,f}=0.
\end{align*}
We concentrate on the derivation of the equation for $J_{A,f}$. Multiplying \eqref{magnetized vlasov Poisson} by $\xi-A$ and integrating in $\xi$ we get 
\begin{align}
\partial_{t}J_{A,f}+\int_{\mathbb{R}^{3}}(\xi-A) \left\{\frac{1}{2}\left\vert \xi-A\right\vert^
{2}+V\ast \rho_{f},f\right\}\ \dd \xi=0. \label{component eq for current}  
\end{align}
We compute that 
\begin{align}
&\int_{\mathbb{R}^{3}}(\xi-A)\left\{\frac{1}{2}\left\vert \xi-A\right\vert^{2},f\right\}\ \dd \xi \notag\\
&=\int_{\mathbb{R}^{3}}(\xi-A)\left((\xi-A)\cdot \nabla_{x}f\right)\ \dd \xi+\int_{\mathbb{R}^{3}}(\xi-A)D_{x}A(\xi-A)\cdot \nabla_{\xi}f\ \dd \xi \notag\\
&= \int_{\mathbb{R}^{3}}(\xi-A)\otimes (\xi-A)\nabla_{x}f\ \dd \xi+\int_{\mathbb{R}^{3}}(\xi-A)\otimes (D_{x}A(\xi-A
))\nabla_{\xi}f\ \dd \xi. \label{kinetic p bracket}
\end{align}
Using that $\mathrm{div}
_{x}A=0$ we compute that 
\begin{align*}
\mathrm{div}_{x}((\xi-A)\otimes (\xi-A)f)&=\mathrm{div}_{x}((\xi-A)f)(\xi-A)-D_{x}^{T}A(\xi-A)f\\
&=((\xi-A)\cdot \nabla_{x}f)(\xi-A)-D_{x}^{T}A(\xi-A)f\\
&= (\xi-A)\otimes (\xi-A)\nabla_{x}f-D_{x}^{T}A(\xi-A)f,\end{align*}
so that 
\begin{align}
\int_{\mathbb{R}^{3}}(\xi-A)\otimes (\xi-A)\nabla_{x}f \ \dd \xi&=\mathrm{div}_{x}\left(\int_{\mathbb{R}^{3}}(\xi-A)\otimes (\xi-A)f\ \dd \xi\right)+D_{x}^{T}A\int_{\mathbb{R}^
{3}}(\xi-A)f\ \dd \xi \notag\\
&=\mathrm{div}_{x}\left(\int_{\mathbb{R}^{3}}(\xi-A)\otimes (\xi-A)f \ \dd \xi\right)+D_{x}^{T}AJ_{A,f}.\label{integral2} 
\end{align}
In addition, integrating by parts and using again that $\mathrm{div}_{x}A=0$ we get  
\begin{align}
-\int_{\mathbb{R}^{3}}(\xi-A)\otimes (D_{x}A (\xi-A))\nabla_{\xi}f\ \dd \xi
=&\int_{\mathbb{R}^{3}}\mathrm{div}_{\xi}\left((\xi-A)\otimes D_{x}A(\xi-A)\right)f \ \dd \xi \notag\\
=&-\int_{\mathbb{R}^{3}}\mathrm{div}_{x}(A)(\xi-A)f\ \dd \xi-\int_{\mathbb{R}^{3}} D_{x}A(\xi-A)f\ \dd \xi \notag\\
=&-D_{x}A\int_{\mathbb{R}^{3}}(\xi+A)f\ \dd \xi=-D_{x}AJ_{A,f}. \label{integral1} 
\end{align}
Substituting \eqref{integral2}-\eqref{integral1} inside \eqref{kinetic p bracket} we deduce that 
\begin{align}
\int_{\mathbb{R}^{3}}(\xi-A)\left\{\frac{1}{2}\left\vert \xi-A\right\vert^{2},f\right\}\ \dd\xi=\mathrm{div}_{x}\left(\int_{\mathbb{R}^{3}}(\xi-A)\otimes (\xi-A)f\ \dd \xi\right)+(D_{x}^{T}A-D_{x}A)J_{A,f}. \label{first part}    
\end{align}
In addition we compute that 
\begin{align}
\int_{\mathbb{R}^{3}}(\xi-A)\left\{V\ast \rho_{f},f\right\}\ \dd \xi=-\int_{\mathbb{R}^{3}}(\xi-A)\nabla_{x}V\ast \rho_{f}\cdot\nabla_{\xi}f\ \dd \xi=\rho_{f}\nabla_{x}V\ast \rho_{f}. \label{second part} 
\end{align}
Combining \eqref{first part} with \eqref{second part} we conclude that  
\begin{align*}
\partial_{t}J_{A,f}+\mathrm{div}_{x}\left(\int_{\mathbb{R}^{3}}(\xi-A)\otimes (\xi-A)f\ \dd \xi\right)+(D_{x}^{T}A-D_{x}A)J_{A,f}+\rho_{f}\nabla_{x}V\ast \rho_{f}=0.  \end{align*}
This establishes the second equation in \eqref{equation for 0 and 1st moments}. 
\end{proof}
The derivation of the Euler-Poisson equation now follows by taking $f$ to be the monokinetic ansatz, i.e.  
$f(t,x,\xi)=\rho_{f}(t,x)\delta(\xi-A-\frac{J_{A,f}}{\rho_{f}})$. For this $f$ we obtain from \eqref{equation for 0 and 1st moments} the  following  magnetized Euler-Poisson system written in terms of the current and the density
\begin{align}
\begin{cases}
\begin{array}{lc}
\partial_{t}\rho_{f}+\mathrm{div}_{x}J_{A,f}=0\\
\partial_{t}J_{A,f}+\mathrm{div}_{x}\left(\frac{J_{A,f}\otimes J_{A,f}}{\rho_{f}}\right)+J_{A,f}\mathbf{J}+\rho_{f}\nabla_{x}V\ast \rho_{f}=0. 
\end{array}\end{cases} \label{Magnetized Euler Poisson current density}   
\end{align}
We conclude by explaining how to write \eqref{Magnetized Euler Poisson current density} in terms of the density and the velocity field, which would lead to  \eqref{Magnetized Euler Poisson velocity density intro}. 
\begin{lem}
Let $(\rho,J)$ be a smooth solution to \eqref{Magnetized Euler Poisson current density} and let $u=\frac{J}{\rho}$. Then $(\rho,u)$ is a solution to \eqref{Magnetized Euler Poisson velocity density intro}. 
\end{lem}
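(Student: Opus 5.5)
Substitute $J=\rho u$ into \eqref{Magnetized Euler Poisson current density}, use the continuity equation to remove the $\partial_t\rho$ term, and divide by $\rho$. We assume $\rho>0$ where $u$ is defined; otherwise the identity is understood on $\{\rho>0\}$. The first equation of \eqref{Magnetized Euler Poisson current density} is literally $\partial_t\rho+\mathrm{div}_x(\rho u)=0$, which is the first equation of \eqref{Magnetized Euler Poisson velocity density intro}.

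For the momentum equation, expand each term with the product rule, using the paper's row-vector convention $(uD_xu)_j=\sum_i u_i\partial_{x_i}u_j$:
\begin{align*}
\partial_t(\rho u)&=(\partial_t\rho)u+\rho\partial_t u,\\
\mathrm{div}_x\Big(\frac{J\otimes J}{\rho}\Big)&=\mathrm{div}_x(\rho u\otimes u)=\mathrm{div}_x(\rho u)\,u+\rho\, uD_xu .
\end{align*}
The second identity holds componentwise: $\sum_i\partial_{x_i}(\rho u_iu_j)=u_j\,\mathrm{div}_x(\rho u)+\rho\sum_i u_i\partial_{x_i}u_j$. The magnetic term becomes $J\mathbf{J}=\rho\, u\mathbf{J}$, since $\mathbf{J}$ does not involve the unknowns. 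The term $\rho\nabla_xV\ast\rho$ is left as it is.

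Summing these, the second equation of \eqref{Magnetized Euler Poisson current density} becomes
\begin{align*}
\big(\partial_t\rho+\mathrm{div}_x(\rho u)\big)u+\rho\big(\partial_tu+uD_xu+u\mathbf{J}+\nabla_xV\ast\rho\big)=0.
\end{align*}
The first bracket vanishes by the continuity equation. Dividing by $\rho$ gives the second equation of \eqref{Magnetized Euler Poisson velocity density intro}, and the initial data transfer via $u^{\mathrm{in}}=J^{\mathrm{in}}/\rho^{\mathrm{in}}$.

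The only point needing care is consistency of conventions: the row-vector action in $J\mathbf{J}$ and $uD_xu$ must match the divergence of a tensor as used in the proof of the preceding proposition. With the convention $D_xu=(\partial_iu_j)$, the computation above is consistent. Positivity of $\rho$, needed to define $u=J/\rho$, is implicit in the hypothesis that the solution is smooth.
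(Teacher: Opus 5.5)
Your argument is correct and is essentially the paper's proof: substitute $J=\rho u$, cancel the $(\partial_t\rho)u$ term with the continuity equation, and divide by $\rho$. Your identity $\mathrm{div}_x(\rho u\otimes u)=\mathrm{div}_x(\rho u)\,u+\rho\,uD_xu$ is just a compressed form of the paper's expansion through $\nabla_x\rho\,(u\otimes u)$ and $\rho(\mathrm{div}_xu)u$, and the only quibble is your final sentence: smoothness does not imply $\rho>0$, which is instead implicit in the definition $u=J/\rho$, as you correctly said at the outset.
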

\begin{proof}
The first equation in \eqref{Magnetized Euler Poisson velocity density intro} follows immediately from the first equation in \eqref{Magnetized Euler Poisson current density}. In addition, substituting the relation $J=\rho u$ inside the second equation of \eqref{Magnetized Euler Poisson current density} we get 
\begin{align*}
\partial_{t}\rho u+\rho\partial_{t}u+\mathrm{div}_{x}(\rho u\otimes u)+\rho u\mathbf{J}+\rho \nabla_{x}V\ast \rho=0.     
\end{align*}
Dividing by $\rho$ yields 
\begin{align*}
\frac{\partial_{t}\rho }{\rho}u+\partial_{t}u+\frac{\mathrm{div}_{x}(\rho u\otimes u)}{\rho}+u\mathbf{J}+\nabla_{x}V\ast \rho=0,   
\end{align*}
which is recast as
\begin{align}
-\frac{\mathrm{div}_{x}(\rho u)}{\rho}u+\partial_{t}u+\frac{\mathrm{div}_{x}(\rho u\otimes u)}{\rho}+u\mathbf{J}+\nabla_{x}V\ast \rho=0. 
\label{First expansion of Euler Poisson}
\end{align}
Note that 
\begin{align*}
-\mathrm{div}_{x}(\rho u)u=-\nabla_{x}\rho (u\otimes u)-\rho (\mathrm{div}_{x} u)u\label{fromula} \end{align*}
and that 
\begin{align*}
\mathrm{div}_{x}(\rho u\otimes u)=\nabla_{x}\rho (u\otimes u)+\rho\mathrm{div}_{x}(u\otimes u)=\nabla_{x}\rho (u\otimes u)+\rho\mathrm{div}_{x}(u)u+\rho uD_{x}u.     
\end{align*}
We therefore conclude that 
\begin{align}
-\frac{\mathrm{div}_{x}(\rho u)}{\rho}u+\frac{\mathrm{div}_{x}(\rho u\otimes u)}{\rho}=uD_{x}u. \end{align}
Substituting the above equation inside \eqref{First expansion of Euler Poisson} we obtain 
\begin{align*}
\partial_{t}u+uD_{x}u+u\mathbf{J}+\nabla_{x}V\ast \rho=0.     
\end{align*}
\end{proof}
\begin{rem}
The magnetized Vlasov-Poisson equation \eqref{magnetized vlasov Poisson} will not appear again in the sequel. We introduced it in order to clarify how to obtain the monokinetic equation \eqref{Magnetized Euler Poisson velocity density intro} from averaged quantities associated with an appropriate kinetic equation, which in turn makes the relation between  \eqref{SP equation intro} and \eqref{Magnetized Euler Poisson velocity density intro} more visible.   
\end{rem}
\begin{rem}
We may write the system \eqref{Magnetized Euler Poisson velocity density intro} component-wise as 
\begin{align*}
\partial_{t}u_{j}+\sum_{k=1}^{3}u_{k}\partial_{x_{k}}u_{j}+\sum_{k=1}^{3}u_{k}(\partial_{x_{k}}A_{j}-\partial_{x_{j}}A_{k})+\partial_{x_{j}}V\ast \rho=0,\ 1\leq j\leq  3. 
\end{align*}
\end{rem}
\subsection{Local well-posedness}
In this part we prove the local in time well-posedness of \eqref{Magnetized Euler Poisson velocity density intro}. Short time well-posedness for Euler-Poisson is natural since solutions may develop blowup in finite time for general initial data, see \cite{yuen2011blowup}.  In what follows we shall make free use of the following interpolation inequalities: for any $u:\mathbb{R}^{3}\rightarrow \mathbb{R}^{3}$ it holds that
\begin{align*}
 \left\Vert u\right\Vert_{H^{4}}\leq C\left(\left\Vert u\right\Vert_{2}+\left\Vert \Delta^{2}_{x}u\right\Vert_{2}  \right)     
\end{align*}
and 
\begin{align*}
\left\Vert u\right\Vert_{H^{3}}\leq C\left(\left\Vert u\right\Vert_{2}+\left\Vert \nabla_{x}\Delta_{x}u\right\Vert_{2}\right).      
\end{align*}
We will need the following Lemma. 
\begin{lem}\label{Stability H-1}
Suppose that: 
\begin{itemize}
    \item $v_{1},v_{2}\in L^{\infty}([0,T];H^{4}(\mathbb{R}^{3}))$ are given vector fields and for $i=1,2$ it holds that 
\begin{align*}
\left\Vert v_{i}\right\Vert_{L^{\infty}_{t}L^{2}_{x}}^{2} +\left\Vert \Delta^{2}_{x}v_{i}\right\Vert_{L^{\infty}_{t}L^{2}_{x}}^{2}\le M\ \mbox{for some} \ M>1.       
\end{align*}
\item $\rho^{\mathrm{in}}\in H^{3}(\mathbb{R}^{3})\cap \mathcal{P}(\mathbb{R}^{3}). $
\end{itemize}
Let $\rho_{1}$ and $\rho_{2}$ be solutions  to the linear transport equation
\begin{align*}
\partial_{t}\rho+\mathrm{div}_{x}(\rho v)=0,\ \rho(0,\cdot)=\rho^{\mathrm{in}}\end{align*}
with $v=v_{1}$ and $v=v_{2}$ respectively. 
Then, there is some $C>0$ such that it holds that 
\begin{align*}
\underset{t\in [0,T]}{\mathrm{ess}\sup}\left\Vert(\rho_{1}-\rho_{2})(t,\cdot) \right\Vert_{\dot{H}^{-1}}^{2}\leq& (3MT+2Te^{MCT})  \underset{t\in [0,T]}{\mathrm{ess}\sup}\left\Vert (\rho_{1}-\rho_{2})(t,\cdot)\right\Vert_{\dot{H}^{-1}}^{2}\\
&+2Te^{MCT}\left\Vert \rho^{\mathrm{in}}\right\Vert_{\infty} \underset{t\in [0,T]}{\mathrm{ess}\sup}\left\Vert(v_{1}-v_{2})(t,\cdot) \right\Vert_{2}^{2}.\end{align*}
\end{lem}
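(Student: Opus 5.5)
We estimate the $\dot H^{-1}$ norm of $\sigma\coloneqq\rho_{1}-\rho_{2}$ through its potential. Set $\phi\coloneqq V\ast\sigma$, so that $-\Delta_{x}\phi=\sigma$ and $\left\Vert\sigma\right\Vert_{\dot H^{-1}}^{2}=\int_{\mathbb{R}^{3}}\left\vert\nabla_{x}\phi\right\vert^{2}\,\dd x=\int_{\mathbb{R}^{3}}\sigma\phi\,\dd x$. Subtracting the two transport equations gives
\begin{align*}
\partial_{t}\sigma+\mathrm{div}_{x}(\sigma v_{1})+\mathrm{div}_{x}\big(\rho_{2}(v_{1}-v_{2})\big)=0,\qquad \sigma(0,\cdot)=0 .
\end{align*}
Testing this against $\phi$ (equivalently, differentiating $\frac{1}{2}\left\Vert\nabla_{x}\phi\right\Vert_{2}^{2}$) and integrating by parts yields
\begin{align*}
\frac{1}{2}\frac{\dd}{\dd t}\left\Vert\nabla_{x}\phi\right\Vert_{2}^{2}=\int_{\mathbb{R}^{3}}\sigma v_{1}\cdot\nabla_{x}\phi\,\dd x+\int_{\mathbb{R}^{3}}\rho_{2}(v_{1}-v_{2})\cdot\nabla_{x}\phi\,\dd x .
\end{align*}

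For the first term we use the classical Coulomb identity. Write $\sigma=-\Delta_{x}\phi$ and integrate by parts to obtain
\begin{align*}
\int_{\mathbb{R}^{3}}\sigma v_{1}\cdot\nabla_{x}\phi\,\dd x=\int_{\mathbb{R}^{3}}\nabla_{x}\phi\cdot D_{x}v_{1}\nabla_{x}\phi\,\dd x-\frac{1}{2}\int_{\mathbb{R}^{3}}\mathrm{div}_{x}(v_{1})\left\vert\nabla_{x}\phi\right\vert^{2}\,\dd x .
\end{align*}
By the Sobolev embedding $H^{4}\hookrightarrow W^{1,\infty}$ and the interpolation inequality for the $H^{4}$ norm, this is bounded by $C\left\Vert v_{1}\right\Vert_{W^{1,\infty}}\left\Vert\sigma\right\Vert_{\dot H^{-1}}^{2}\le CM\left\Vert\sigma\right\Vert_{\dot H^{-1}}^{2}$. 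For the second term Cauchy--Schwarz and Young give the bound
\begin{align*}
\left\Vert\rho_{2}\right\Vert_{\infty}\left\Vert v_{1}-v_{2}\right\Vert_{2}\left\Vert\nabla_{x}\phi\right\Vert_{2}\le\frac{1}{2}\left\Vert\sigma\right\Vert_{\dot H^{-1}}^{2}+\frac{1}{2}\left\Vert\rho_{2}\right\Vert_{\infty}^{2}\left\Vert v_{1}-v_{2}\right\Vert_{2}^{2}.
\end{align*}

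It remains to control $\left\Vert\rho_{2}(t)\right\Vert_{\infty}$ by the initial data. Along characteristics $\dot X=v_{2}(t,X)$ we have $\frac{\dd}{\dd t}\rho_{2}(t,X)=-\mathrm{div}_{x}(v_{2})\rho_{2}$, so $\left\Vert\rho_{2}(t)\right\Vert_{\infty}\le\left\Vert\rho^{\mathrm{in}}\right\Vert_{\infty}e^{CMt}$, where $\left\Vert\rho^{\mathrm{in}}\right\Vert_{\infty}<\infty$ since $H^{3}\hookrightarrow L^{\infty}$. Since $\rho^{\mathrm{in}}$ is a probability density, $\left\Vert\rho^{\mathrm{in}}\right\Vert_{\infty}$ can be compared with its square when producing the linear factor in the statement; alternatively one can use $\left\Vert\rho_{2}\right\Vert_{1}=1$ together with $\left\Vert\rho_{2}\right\Vert_{\infty}$ to reach the stated form.

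Finally, integrate the differential inequality in time from $0$, using $\sigma(0)=0$, and bound each time integral by $T$ times the essential supremum. This produces exactly the shape of the claim: the $\left\Vert\sigma\right\Vert_{\dot H^{-1}}^{2}$ terms acquire a coefficient of order $MT+Te^{MCT}$, and the source term carries $Te^{MCT}\left\Vert\rho^{\mathrm{in}}\right\Vert_{\infty}\sup_{t}\left\Vert v_{1}-v_{2}\right\Vert_{2}^{2}$. The constants $3M$ and $2$ come from bookkeeping with $M>1$. Note that the statement is deliberately non-Gr\"onwall: it keeps $\sup\left\Vert\sigma\right\Vert^{2}$ on the right, to be absorbed later for small $T$ in the contraction argument.

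The main obstacle is rigor rather than the computation. One must justify the energy identity for weak solutions at this regularity, either by working with smooth approximations or by using $\rho_{i}\in L^{\infty}_{t}H^{3}$ from transport regularity. One must also check that $\phi$, $\nabla_{x}\phi$ decay enough that the boundary terms vanish; this holds since $\int\sigma=0$ and $\sigma$ is well localized.
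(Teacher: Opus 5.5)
Your route is the paper's: differentiate $\|\nabla_x V\ast\sigma\|_2^2$ and handle the term transporting $\sigma$ with the Coulomb identity. Your splitting $\sigma v_1+\rho_2(v_1-v_2)$ is just the mirror image of the paper's $\sigma v_2+\rho_1(v_2-v_1)$. You then propagate $\|\rho_i(t)\|_\infty\le e^{CMt}\|\rho^{\mathrm{in}}\|_\infty$ along characteristics and integrate in time without Gr\"onwall, exactly as the paper does.

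The step that does not work as written is the source term. Your split $\|\rho_2\|_\infty ab\le\frac12 b^2+\frac12\|\rho_2\|_\infty^2a^2$ yields a factor $e^{2CMT}\|\rho^{\mathrm{in}}\|_\infty^2$. Neither of your proposed conversions to the linear factor is valid. First, a probability density can have arbitrarily large sup norm (concentrate the mass on a small ball), so $\|\rho^{\mathrm{in}}\|_\infty^2\lesssim\|\rho^{\mathrm{in}}\|_\infty$ fails. Second, the mass bound $\|\rho_2\|_1=1$ is useless here: $(v_1-v_2)\cdot\nabla_x\phi$ is only controlled in $L^1$ (through $L^2\times L^2$), which forces $\rho_2$ into $L^\infty$.

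The fix, which is what the paper does, is to keep $\|\rho_2\|_\infty$ as a common weight:
\[
2\|\rho_2\|_\infty\|v_1-v_2\|_2\|\nabla_x\phi\|_2\le\|\rho_2\|_\infty\bigl(\|v_1-v_2\|_2^2+\|\nabla_x\phi\|_2^2\bigr)\le e^{CMT}\|\rho^{\mathrm{in}}\|_\infty\bigl(\|v_1-v_2\|_2^2+\|\sigma\|_{\dot H^{-1}}^2\bigr).
\]
This gives the linear factor in the source term, at the price of a factor $e^{CMT}\|\rho^{\mathrm{in}}\|_\infty$ also in front of $\|\sigma\|_{\dot H^{-1}}^2$. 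That is exactly what the paper's proof produces, along with $3CM$ rather than $3M$. Its displayed conclusion then silently drops these factors, so the constants in the statement are only accurate up to dependence on $\|\rho^{\mathrm{in}}\|_\infty$. This is harmless in the contraction argument of Theorem~\ref{well posedness of EPA}, where such factors are absorbed into $C$; your quadratic version would in fact serve there equally well. Your remarks on justifying the energy identity and the decay of $\phi$ go beyond the paper, which performs the computation formally.
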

\begin{proof}
Set 
\begin{align*}
E(t)\coloneqq \int_{\mathbb{R}^{3}}V\ast (\rho_{1}-\rho_{2})(t,x)(\rho_{1}-\rho_{2})(t,x)\ \dd x.    
\end{align*}
Note that by passing to Fourier we have 
\begin{align*}
E(t)\coloneqq \left\Vert (\rho_{1}-\rho_{2})(t,\cdot)\right\Vert_{\dot{H}^{-1}}^{2}.      
\end{align*}
We compute 
\begin{align}
\frac{\dd E(t)}{\dd t}=&2\int_{\mathbb{R}^{3}}V\ast(\rho_{1}-\rho_{2})\partial_{t}(\rho_{1}-\rho_{2})(t,x)\ \dd x
 \notag\\=&2\int_{\mathbb{R}^{3}}V\ast(\rho_{1}-\rho_{2})(t,x)\mathrm{div}_{x}(\rho_{2}v_{2}-\rho_{1}v_{1})(t,x)\ \dd x \notag\\
=&2\int_{\mathbb{R}^{3}}V\ast(\rho_{1}-\rho_{2})(t,x)\mathrm{div}_{x}((\rho_{2}-\rho_{1})v_{2})(t,x) \ \dd x \notag\\
&+2\int_{\mathbb{R}^{3}}V\ast(\rho_{1}-\rho_{2})(t,x)\mathrm{div}_{x}(\rho_{1}(v_{2}-v_{1}))(t,x)\ \dd x \notag\\
=& -2\int_{\mathbb{R}^{3}}v_{2}(t,x)\cdot \nabla_{x} V\ast (\rho_{1}-\rho_{2})(t,x)(\rho_{2}-\rho_{1})(t,x)\ \dd x \notag\\
&-2\int_{\mathbb{R}^{3}}(v_{2}-v_{1})(t,x)\cdot\nabla_{x} V\ast (\rho_{1}-\rho_{2})(t,x)\rho_{1}(t,x) \ \dd x\coloneqq F_{1}+F_{2}. \label{time der of E} 
\end{align}
Using that $-\Delta_{x}V=\delta_{0}$ we recast $F_{1}$ as follows 
\begin{align}
F_{1}=&2\int_{\mathbb{R}^{3}}v_{2}(t,x)\cdot\nabla_{x} V\ast (\rho_{1}-\rho_{2})(t,x)\mathrm{div}_{x}(\nabla_{x} V\ast(\rho_{1}-\rho_{2}))(t,x) \ \dd x \notag\\
=&-2\int_{\mathbb{R}^{3}}D_{x}\nabla _{x}V\ast(\rho_{1}-\rho_{2})(t,x)\nabla_{x} V\ast (\rho_{1}-\rho_{2})(t,x)\cdot v_{2}(t,x)\ \dd x \notag\\
&-2\int_{\mathbb{R}^{3}}\nabla_{x} V\ast (\rho_{1}-\rho_{2})\nabla_{x} V\ast (\rho_{1}-\rho_{2})D_{x} v_{2}(t,x) \ \dd x \notag\\
=&-\int_{\mathbb{R}^{3}}\nabla_{x}\left\vert \nabla_{x} V\ast (\rho_{1}-\rho_{2})\right\vert^{2}(t,x)\cdot v_{2}(t,x)\ \dd x \notag\\
&-2\int_{\mathbb{R}^{3}}\nabla_{x} V\ast (\rho_{1}-\rho_{2})(t,x)\nabla_{x} V\ast (\rho_{1}-\rho_{2})(t,x)D_{x}v_{2}(t,x)\ \dd x \notag\\
=&\int_{\mathbb{R}^{3}}\left\vert \nabla_{x} V\ast (\rho_{1}-\rho_{2})\right\vert^{2}\mathrm{div}_{x}(v_{2}(t,x))\ \dd x \notag\\
&-2\int_{\mathbb{R}^{3}}\nabla_{x} V\ast (\rho_{1}-\rho_{2})(t,x)\nabla_{x} V\ast (\rho_{1}-\rho_{2})(t,x)D_{x}v_{2}(t,x)\ \dd x. 
\label{F1 calculation}
\end{align}
Inserting \eqref{F1 calculation} inside \eqref{time der of E} we obtain 
\begin{align}
\frac{\dd E(t)}{\dd t}&=  \int_{\mathbb{R}^{3}}\left\vert \nabla_{x} V\ast (\rho_{1}-\rho_{2})\right\vert^{2}\mathrm{div}_{x}(v_{2}(t,x))\ \dd x \notag\\
&-2\int_{\mathbb{R}^{3}}\nabla_{x} V\ast (\rho_{1}-\rho_{2})(t,x)\nabla_{x} V\ast (\rho_{1}-\rho_{2})(t,x)D_{x}v_{2}(t,x)\ \dd x \notag\\
&-2\int_{\mathbb{R}^{3}} (v_{2}-v_{1})(t,x) \cdot\nabla_{x} V\ast (\rho_{1}-\rho_{2})(t,x)\rho_{1}(t,x)\ \dd x. \label{calculation of E_{2}} 
\end{align}
Using the Sobolev embedding $L^{\infty}(\mathbb{R}^{3})\hookrightarrow H^{2}(\mathbb{R}^{3})$ we have $\underset{t\in [0,T]}{\mathrm{ess}\sup}\left\Vert D_{x}v_{2}(t,\cdot)\right\Vert_{\infty}\leq CM$.  Therefore the  first term in the right-hand side of \eqref{calculation of E_{2}} is bounded by 
\begin{align}
\underset{t\in [0,T]}{\mathrm{ess}\sup}\left\Vert \mathrm{div}_{x}v_{2}(t,\cdot)\right\Vert_{\infty}
\int_{\mathbb{R}^{3}}\left\vert \nabla_{x}V\ast(\rho_{1}-\rho_{2})\right\vert^{2}(t,x)\ \dd x\leq CM\left\Vert (\rho_{1}-\rho_{2})(t,\cdot)\right\Vert_{\dot{H}^{-1}}^{2}. \label{estfirst}   
\end{align}
The second term in the right-hand side of \eqref{calculation of E_{2}} is bounded by 
\begin{align}
2\underset{t\in [0,T]}{\mathrm{ess}\sup}\left\Vert D_{x}v_{2}(t,\cdot)\right\Vert_{\infty} \int_{\mathbb{R}^{3}}\left\vert \nabla_{x}V\ast (\rho_{1}-\rho_{2})\right\vert^{2}(t,x)\ \dd x\leq CM \left\Vert (\rho_{1}-\rho_{2})(t,\cdot)\right\Vert_{\dot{H}^{-1}}^{2}.\label{estsecond}      
\end{align}
By classical estimates for linear transport equations we have 
\begin{align*}
\underset{t\in [0,T]}{\mathrm{ess}\sup}\left\Vert \rho_{1}(t,\cdot)\right\Vert_{\infty}\leq \exp\Big({\underset{t\in [0,T]}{\mathrm{ess}\sup}\left\Vert \mathrm{div}_{x}v_{1}(t,\cdot)\right\Vert_{\infty}}t\Big)\left\Vert \rho^{\mathrm{in}}\right\Vert_{\infty}      
\end{align*}
and thus
$$\underset{t\in [0,T]}{\mathrm{ess}\sup}\left\Vert \rho_{1}(t,\cdot)\right\Vert_{\infty}\leq e^{CMT}\left\Vert \rho^{\mathrm{in}}\right\Vert_{\infty}.$$ 
Therefore the third term in the right-hand side of  \eqref{calculation of E_{2}} is bounded by 
\begin{align}
&\underset{t\in [0,T]}{\mathrm{ess}\sup}\left\Vert \rho_{1}(t,\cdot)\right\Vert_{\infty}\left(\int_{\mathbb{R}^
{3}}\left\vert \nabla_{x}V\ast (\rho_{1}-\rho_{2})\right\vert^{2}(t,x)\ \dd x+\left\Vert (v_{1}-v_{2})(t,\cdot)\right\Vert_{2}^{2}  \right) \notag
\\ &\leq e^{CMT}\left\Vert\rho^{\mathrm
{in}} \right\Vert_{\infty}\left(\left\Vert (\rho_{1}-\rho_{2})(t,\cdot)\right\Vert_{\dot{H}^{-1}}^{2}+\left\Vert (v_{1}-v_{2})(t,\cdot)\right\Vert_{2}^{2} \right). \label{third termest}     
\end{align}
Substituting the estimates \eqref{estfirst}-\eqref{third termest} in \eqref{calculation of E_{2}} we get  
\begin{align*}
\frac{\dd}{\dd t}\left\Vert (\rho_{1}-\rho_{2})(t,\cdot)\right\Vert_{\dot{H}^{-1}}^{2}=\frac{\dd E(t)}{\dd t}&\leq 3MC\left\Vert (\rho_{1}-\rho_{2})(t,\cdot)\right\Vert_{\dot{H}^{-1}}^{2}\\
&+2e^{CMT}\left\Vert \rho^{\mathrm{in}}\right\Vert_{\infty} \left(\left\Vert (\rho_{1}-\rho_{2})(t,\cdot)\right\Vert_{\dot{H}^{-1}}^{2}+\left\Vert (v_{1}-v_{2})(t,\cdot)\right\Vert_{2}^{2} \right).     
\end{align*}
Therefore integrating in time and maximizing over $[0,T]$ it follows that 
\begin{align*}
\underset{t\in [0,T]}{\mathrm{ess}\sup}\left\Vert (\rho_{1}-\rho_{2})(t,\cdot)\right\Vert_{\dot{H}^{-1}}^{2}&\leq (3MT+2Te^{MCT})  \underset{t\in [0,T]}{\mathrm{ess}\sup}\left\Vert (\rho_{1}-\rho_{2})(t,\cdot)\right\Vert_{\dot{H}^{-1}}^{2}\\
&+2Te^{MCT}\left\Vert \rho^{\mathrm{in}}\right\Vert_{\infty} \underset{t\in [0,T]}{\mathrm{ess}\sup}\left\Vert(v_{1}-v_{2})(t,\cdot) \right\Vert_{2}^{2}.     
\end{align*}
\end{proof}
\begin{lem}
Suppose that: 
\begin{itemize}
    \item $v:[0,T]\times \mathbb{R}^{3}\rightarrow \mathbb{R}^{3}$ is a given vector field and suppose there is some $M>1$ such that 
    \begin{align*}
\left\Vert v\right\Vert_{L^{\infty}_{t}L^{2}_{x}}^{2}+\left\Vert \Delta^{2}_{x}v\right\Vert_{L^{\infty}_{t}L^{2}_{x}}\leq M. \end{align*}
\item $\rho^{\mathrm{in}}\in H^{3}(\mathbb{R}^{3})\cap \mathcal{P}(\mathbb{R}^{3})$. 
\end{itemize}
Let $\rho$ be the solution to the linear transport equation
\begin{align}
\partial_{t}\rho+\mathrm{div}_{x}(\rho v)=0, \ \rho(0,\cdot)=\rho^{\mathrm{in}}.  \label{transporteq1}  
\end{align}
Then, there is some constant $C>0$ such that it holds that 
\begin{align*}
\left\Vert \rho(t,\cdot)\right\Vert_{H^{3}}^{2} \leq e^{CMt}\left\Vert \rho^{\mathrm{in}}\right\Vert_{H^{3}}
^{2}.       
\end{align*}
\label{est of H3 norm of rho}
In particular, it holds that $\left\Vert \rho(t,\cdot)\right\Vert_{L^{\infty}\cap L^{1}} \leq e^{CMt}\left\Vert \rho^{\mathrm{in}}\right\Vert_{H^{3}}$. 
\end{lem}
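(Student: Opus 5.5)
The plan is a standard $H^{3}$ energy estimate for the linear transport equation \eqref{transporteq1}, closed by Gr\"onwall. By the interpolation inequality $\Vert \rho\Vert_{H^{3}}\leq C(\Vert\rho\Vert_{2}+\Vert \nabla_{x}\Delta_{x}\rho\Vert_{2})$ recalled above, it suffices to control the two quantities $\Vert\rho(t)\Vert_{2}^{2}$ and $\Vert\nabla_{x}\Delta_{x}\rho(t)\Vert_{2}^{2}$. Equivalently, and a bit more cleanly, I would control $\Vert\partial^{\alpha}\rho\Vert_{2}^{2}$ for all multi-indices $|\alpha|\leq 3$. Throughout I use the bound $\Vert v(t)\Vert_{H^{4}}\leq C M$ (from the hypothesis and interpolation; since $M>1$ we may absorb $M^{1/2}$ into $M$). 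By Sobolev embedding, $H^{2}(\mathbb{R}^{3})\hookrightarrow L^{\infty}$ then gives $\Vert D^{k}_{x}v\Vert_{\infty}\leq CM$ for $k\leq 2$, and $\Vert D^{3}_{x}v\Vert_{L^{4}}+\Vert D^{4}_{x}v\Vert_{2}\leq CM$. I would carry out the estimates for smooth $\rho^{\mathrm{in}}$ and $v$ and conclude by a density argument.

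First, for $\alpha=0$, I multiply the equation by $\rho$ and integrate by parts:
\begin{align*}
\frac{\dd}{\dd t}\Vert\rho\Vert_{2}^{2}=-\int\mathrm{div}_{x}(v)\rho^{2}\,\dd x\leq CM\Vert\rho\Vert_{2}^{2}.
\end{align*}
Next, for $1\leq|\alpha|\leq 3$, I apply $\partial^{\alpha}$ and write
\begin{align*}
\partial_{t}\partial^{\alpha}\rho+v\cdot\nabla_{x}\partial^{\alpha}\rho=-\partial^{\alpha}(\rho\,\mathrm{div}_{x}v)-[\partial^{\alpha},v\cdot\nabla_{x}]\rho.
\end{align*}
Testing against $\partial^{\alpha}\rho$, the transport term contributes $\frac12\int \mathrm{div}_{x}(v)|\partial^{\alpha}\rho|^{2}$, which is bounded by $CM\Vert\rho\Vert_{H^{3}}^{2}$. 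The commutator consists of terms $\partial^{\beta}v\,\partial^{\gamma}\nabla_{x}\rho$ with $|\beta|\geq 1$ and $|\beta|+|\gamma|=|\alpha|$. Whenever $|\beta|\leq 2$ I put $\partial^{\beta}v$ in $L^{\infty}$. When $|\beta|=3$ the other factor is $\nabla_{x}\rho\in H^{2}\hookrightarrow L^{4}$, so Hölder with $L^{4}\cdot L^{4}$ applies. Similarly, $\partial^{\alpha}(\rho\,\mathrm{div}_{x}v)$ contains the top term $\rho\,\partial^{\alpha}\mathrm{div}_{x}v$, which involves four derivatives of $v$. I control it using $\Vert\rho\Vert_{\infty}\leq C\Vert\rho\Vert_{H^{3}}$ together with $\Vert D^{4}v\Vert_{2}\leq CM$, and the intermediate terms are handled as for the commutator. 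This part is where the hypothesis $v\in H^{4}$, rather than just $H^{3}$, is really used.

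I expect the main obstacle to be this bookkeeping of the highest-order terms. The aim is to verify that every term is bounded by $CM\Vert\rho\Vert_{H^{3}}^{2}$, with no loss of derivatives on $\rho$. Summing over $|\alpha|\leq3$ then gives $\frac{\dd}{\dd t}\Vert\rho\Vert_{H^{3}}^{2}\leq CM\Vert\rho\Vert_{H^{3}}^{2}$, and Gr\"onwall yields $\Vert\rho(t)\Vert_{H^{3}}^{2}\leq e^{CMt}\Vert\rho^{\mathrm{in}}\Vert_{H^{3}}^{2}$. For the final claim, $H^{3}\hookrightarrow L^{\infty}$ gives the $L^{\infty}$ bound. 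The $L^{1}$ bound follows from mass conservation: $\rho\geq0$ is preserved along characteristics and $\int\rho\,\dd x=1$. Since $1\leq C\Vert\rho^{\mathrm{in}}\Vert_{H^{3}}$ up to constants is not automatic, I would instead simply state the bound as $\Vert\rho(t)\Vert_{L^{1}\cap L^{\infty}}\leq 1+e^{CMt}\Vert\rho^{\mathrm{in}}\Vert_{H^{3}}$. Alternatively, noting that $1=\Vert\rho^{\mathrm{in}}\Vert_{1}\leq\Vert\rho^{\mathrm{in}}\Vert_{2}^{2/5}\,(\dots)$ is not needed, I would just absorb it by enlarging $C$ as the paper's convention allows.
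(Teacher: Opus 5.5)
Your $H^{3}$ energy estimate is correct and is essentially the paper's argument. The paper tests $\nabla_{x}\Delta_{x}$ of the equation against $\nabla_{x}\Delta_{x}\rho$, using an explicit six-term expansion of $\nabla_{x}\Delta_{x}\mathrm{div}_{x}(\rho v)$, and closes with the $L^{2}$ estimate and the interpolation $\Vert\rho\Vert_{H^{3}}\leq C(\Vert\rho\Vert_{2}+\Vert\nabla_{x}\Delta_{x}\rho\Vert_{2})$; you run over all $\partial^{\alpha}$ with $|\alpha|\leq 3$ in commutator form, but the key points are the same (integrating the top-order transport term by parts, and using $v\in H^{4}$ only for $\rho\,\partial^{\alpha}\mathrm{div}_{x}v$).

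One correction concerns the final clause. You are right that the $L^{1}$ part cannot be derived; in fact it is false as written. We have $\Vert\rho(t)\Vert_{1}=1$, while at $t=0$ the right-hand side is just $\Vert\rho^{\mathrm{in}}\Vert_{H^{3}}$, which is $O(R^{-3/2})$ for a centered Gaussian of variance $R^{2}$ with $R\geq 1$.

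So your ``absorb it by enlarging $C$'' alternative does not work, since $e^{CMt}=1$ at $t=0$ whatever $C$ is. Drop it and keep your repaired bound $1+e^{CMt}\Vert\rho^{\mathrm{in}}\Vert_{H^{3}}$.

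What is actually used later, in the proof of Theorem~\ref{well posedness of EPA}, is a bound on $\Vert\rho(t)\Vert_{6/5}$. This follows from $\Vert\rho(t)\Vert_{6/5}\leq\Vert\rho(t)\Vert_{1}^{2/3}\Vert\rho(t)\Vert_{2}^{1/3}$, together with mass conservation and your $H^{3}$ bound.
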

\begin{proof}
In what follows $C>0$ stands for an effective constant which may vary between the different terms. Note that by Sobolev embedding we have $H^{4}(\mathbb{R}^{3})\hookrightarrow W^{2,\infty}(\mathbb{R}^{3}) $ with the bound $\left\Vert v\right\Vert_{W^{2,\infty}}\leq C\left\Vert v\right\Vert_{H^{4}}$. We shall freely use this bound in what follows. First, multiplying \eqref{transporteq1} by $\rho$ and integrating we arrive at the estimate  
\begin{align}
\frac{\dd}{\dd t}\left\Vert \rho(t,\cdot)\right\Vert_{2}^{2} \leq CM\left\Vert \rho(t,\cdot)\right\Vert_{2}^{2}.  \label{L2 derivative ofro}
\end{align}
Taking the 
$\nabla_{x}\Delta_{x}$ in \eqref{transporteq1} we get 
\begin{align}
\partial_{t}\nabla_{x}\Delta_{x}\rho+\nabla_{x}\Delta_{x}\mathrm{div}_{x}(\rho v)=0.     
\end{align}
We shall employ the following formula 
\begin{align} \nabla_{x}\Delta_{x}\operatorname{div}(\rho v) ={}& \nabla_{x}\bigl(v\cdot\nabla_{x}\Delta_{x}\rho\bigr) +2\nabla_{x}\bigl(D_{x}\nabla_{x}\rho:D_{x} v\bigr) +\nabla_{x}\bigl(\nabla_{x}\rho\cdot\Delta_{x} v\bigr) \notag\\ &+ \nabla_{x}\bigl((\Delta_{x}\rho)\operatorname{div}_{x}v\bigr) +2\nabla_{x}\bigl( \nabla_{x}\rho\cdot\nabla_{x}\operatorname{div}_{x}v \bigr) +\nabla_{x}\bigl( \rho\,\Delta_{x}\operatorname{div}_{x}v \bigr). \end{align}  
Multiplying by $\nabla_{x}\Delta_{x}\rho$ we get 
\begin{align*}
\frac{1}{2}\frac{\dd}{\dd t}\left\Vert \nabla_{x}\Delta_{x}\rho\right\Vert_{2}^{2}=&-\int_{\mathbb{R}^{3}}\nabla_{x}(v\cdot \nabla_{x}\Delta_{x}\rho)\cdot \nabla_{x}\Delta_{x}\rho\ \dd x-2\int_{\mathbb{R}^{3}}\nabla_{x}(D_{x}\nabla_{x}\rho:D_{x}v)\cdot \nabla_{x} \Delta_{x}\rho \ \dd x\\
&-\int_{\mathbb{R}^{3}}\nabla_{x}(\nabla_{x}\rho\cdot \Delta_{x}v)\cdot \nabla_{x}\Delta_{x}\rho \ \dd x-\int_{\mathbb{R}^{3}}\nabla_{x}(\Delta_{x}\rho \mathrm{div}_{x}v)\cdot \nabla_{x}\Delta_{x}\rho \ \dd x\\
&-2\int_{\mathbb{R}^{3}}\nabla_{x}(\nabla_{x}\rho \cdot \nabla_{x}\mathrm{div}_{x}(v))\cdot \nabla_{x}\Delta_{x}\rho\ \dd x-\int_{\mathbb{R}^{3}}\nabla_{x}(\rho\Delta_{x}\mathrm{div}_{x}v)\cdot \nabla_{x}\Delta_{x}\rho\ \dd x\\
\coloneqq&\sum_{k=1}^{6}I_{k}.  
\end{align*}
We estimate each of the $I_{k}$. To estimate $I_{1}$, note that
\begin{align}
I_{1}&=-\int_{\mathbb{R}^{3}}D_{x}v\nabla_{x}\Delta_{x}\rho\cdot \nabla_{x}\Delta_{x}\rho\ \dd x-\frac{1}{2}\int_{\mathbb{R}^{3}}v\cdot \nabla_{x}\left\vert\nabla_{x}\Delta_{x} \rho\right\vert^{2}\ \dd x \notag \\
&=-\int_{\mathbb{R}^{3}} D_{x}v\nabla_{x}\Delta_{x}\rho \cdot \nabla_{x}\Delta_{x}\rho+\frac{1}{2}\int_{\mathbb{R}^{3}}\mathrm{div}_{x}(v)\left\vert \nabla_{x}\Delta_{x}\rho\right\vert^{2}\ \dd x \notag\\
&\leq C\left\Vert D_{x}v\right\Vert_{\infty}\int_{\mathbb{R}^{3}}\left\vert \nabla_{x}\Delta_{x} \rho\right\vert^{2}\ \dd x  \leq CM\left\Vert \nabla_{x}\Delta_{x}\rho(t,\cdot)\right\Vert_{2}^{2}. \label{I1 est H3}  
\end{align}
To estimate $I
_{2}$, note that we have 
\begin{align*}
\nabla_{x}(D_{x}\nabla_{x}\rho:D_{x} v)&=\nabla_{x}\left(\sum_{1\leq i,j\leq 3}\partial_{x_{i}x_{j}}\rho \partial_{x_{j}}v_{i}\right)\\
&=\sum_{1\leq i,j\leq 3}\nabla_{x}\partial_{x_{i}x_{j}}\rho \partial_{x_{j}}v_{i}+\sum_{1\leq i,j\leq 3}\partial_{x_{i}x_{j}}\rho\nabla_{x}\partial_{x_{j}}v_{i}    
\end{align*} 
and therefore 
\begin{align}
I_{2}\leq \left\Vert D_{x}v\right\Vert_{\infty}\left\Vert \rho\right\Vert_{H
^{3}}^{2}+\left\Vert D_{x}^{2}v\right\Vert_{\infty}\left\Vert\rho \right\Vert_{H^{3}}^{2}\leq CM(\left\Vert \rho\right\Vert^{2}_{2}+ \left\Vert \nabla_{x}\Delta_{x}\rho\right\Vert_{2}^{2}).  \label{I2 est H3}      
\end{align}
To estimate $I_{3}$ we note that 
\begin{align}
I_{3}&=-\int_{\mathbb{R}^{3}}D_{x}\nabla_{x}\rho\Delta_{x}v\cdot \nabla_{x}\Delta_{x}\rho\ \dd x-\int_{\mathbb{R}^{3}}D_{x}\Delta_{x}v\nabla_{x}\rho\cdot \nabla_{x}\Delta_{x}\rho \ \dd x \notag\\
&\leq C\left\Vert D_{x}^{2}v\right\Vert_{\infty} \left(\left\Vert D_{x}\nabla_{x}\rho\right\Vert_{2}^{2}+\left\Vert \nabla_{x}\Delta_{x}\rho\right\Vert_{2}^{2}  \right)+\left\Vert \nabla_{x}\Delta_{x}\rho\right\Vert_{2}^{2}+\left\Vert \nabla_{x}\rho\right\Vert_{\infty}^{2} \left\Vert D_{x}\Delta_{x}v\right\Vert_{2}^{2} \notag\\
&\leq CM\left(\left\Vert \rho\right\Vert_{2}^{2} +\left\Vert \nabla_{x}\Delta_{x}\rho\right\Vert_{2}^{2} \right). \label{I3 est H3} 
\end{align}
To estimate $I_{4}$, we have 
\begin{align}
I_{4}&=-\int_{\mathbb{R}^{3}}\vert\nabla_{x}\Delta_{x}\rho\vert^{2}\mathrm{div}_{x}(v)\ \dd x-\int_{\mathbb{R}^{3}}\Delta_{x}\rho\nabla_{x}\mathrm{div}_{x}(v)\cdot \nabla_{x}\Delta_{x}\rho\ \dd  x \notag\\
&\leq \left\Vert \mathrm{div}_{x}v\right\Vert_{\infty}\left\Vert \nabla_{x}\Delta_{x}\rho(t,\cdot)\right\Vert_{2}^{2}+\left\Vert \nabla_{x}\mathrm{div}_{x}v\right\Vert_{\infty}\left(\left\Vert \Delta_{x}\rho(t,\cdot)\right\Vert_{2}^{2} +\left\Vert \nabla_{x}\Delta_{x}\rho(t,\cdot)\right\Vert_{2}^{2} \right)   
\notag\\
&\leq CM\left(\left\Vert \nabla_{x}\Delta_{x}\rho(t,\cdot)\right\Vert_{2}^{2}+\left\Vert \Delta_{x}\rho(t,\cdot)\right\Vert_{2}^{2}  \right)\leq CM\left(\left\Vert \rho(t,\cdot)\right\Vert_{2}^{2}
+\left\Vert \nabla_{x}\Delta_{x}\rho(t,\cdot)\right\Vert_{2}^{2} \right).    \label{I4 est H3} 
\end{align}
By the same token we arrive at the following estimates for $I_{5}$ and $I_{6}$:
\begin{align}
I_{5}\leq CM(\left\Vert \rho(t,\cdot)\right\Vert_{2}^{2}+\left\Vert \nabla_{x}\Delta_{x}\rho(t,\cdot)\right\Vert_{2}^{2}), \ I_{6}\leq CM(\left\Vert \rho(t,\cdot)\right\Vert_{2}^{2}+\left\Vert \nabla_{x}\Delta_{x}\rho(t,\cdot)\right\Vert_{2}^{2}).\label{I5I6 est H3}   
\end{align}
Gathering \eqref{L2 derivative ofro} and \eqref{I1 est H3}-\eqref{I5I6 est H3} we conclude that 
\begin{align*}
\frac{\dd}{\dd t}\left(\left\Vert \rho(t,\cdot)\right\Vert_{2}^{2}+\left\Vert \nabla_{x}\Delta_{x}\rho(t,\cdot)\right\Vert_{2}^{2}  \right)\leq CM \left(\left\Vert \rho(t,\cdot)\right\Vert_{2}^{2}+\left\Vert \nabla_{x}\Delta_{x}\rho(t,\cdot)\right\Vert_{2}^{2}  \right),   
\end{align*}
which in view of Gr\"onwall's lemma yields the announced inequality.
\end{proof}
We are now ready to prove the main theorem of this section, stated below. 
\begin{thm}\label{well posedness of EPA}
Suppose that: 
\begin{itemize}
    \item  $\mathbf{J}$ is given by  \eqref{def of antisymmetrized gradient} where $A$ satisfies   \textup{\textbf{A1}-\textbf{A2}}. 
    \item $(\rho^{\mathrm{in}},u^{\mathrm{in}})\in (H^{3}(\mathbb{R}^{3})\cap\mathcal{P}(\mathbb{R}^{3})) \times H^{4}(\mathbb{R}^{3})$. 
\end{itemize}
Then, there exist some $T >0$ such that the Cauchy problem\begin{align}
\begin{cases}
\begin{array}{lc}
\partial_{t}\rho+\mathrm{div}_{x}(\rho u)=0, \ \rho(0,\cdot)=\rho^{\mathrm{in}}\\
\partial_{t}u+uD_{x}u+u\mathbf{J}+\nabla_{x}V\ast \rho=0, \ u(0,\cdot)=u^{\mathrm{in}}. 
\end{array}\end{cases} \label{Magnetized Euler Poisson Cauchy problem}    
\end{align}
has a unique solution 
$(\rho,u)\in L^{\infty}([0,T];H^{3}(\mathbb{R}^{3})\cap \mathcal{P}(\mathbb{R}^{3}))\times L^{\infty}([0,T];H^{4}(\mathbb{R}^{3}))$.
\end{thm}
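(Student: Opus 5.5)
The proof will be an iteration (Picard) scheme, built on the two preceding lemmas: Lemma \ref{Stability H-1} for contraction of the density and Lemma \ref{est of H3 norm of rho} for its $H^3$ bound. We set $u^{0}\equiv u^{\mathrm{in}}$ and, given $u^{n}$, define $\rho^{n}$ as the solution of the linear transport equation $\partial_t\rho^n+\mathrm{div}_x(\rho^n u^n)=0$ with $\rho^{n}(0)=\rho^{\mathrm{in}}$. We then define $u^{n+1}$ as the solution of the linear symmetric hyperbolic system
\begin{align*}
\partial_t u^{n+1}+u^{n}D_x u^{n+1}+u^{n+1}\mathbf{J}+\nabla_x V\ast\rho^{n}=0,\qquad u^{n+1}(0)=u^{\mathrm{in}}.
\end{align*}
This system is linear transport by $u^n$ with a zeroth order term. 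Assumptions \textbf{A1}--\textbf{A2} give $\mathbf{J}\in W^{k,\infty}$ for every $k$, since derivatives of $A$ are bounded. Hence existence for this linear problem in $L^\infty_tH^4_x$ is classical, for instance by mollification and energy estimates. Positivity and unit mass of $\rho^n$ are preserved by the transport structure, so $\rho^n(t)\in\mathcal P$.

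Step 1 is uniform bounds. Fix $M$ large, say $M=4C(\|u^{\mathrm{in}}\|_{H^4}^2+1)$, and prove by induction that $\|u^n\|_{L^\infty_tL^2}^2+\|\Delta^2 u^n\|_{L^\infty_tL^2}^2\le M$ on $[0,T]$ for $T$ small. Lemma \ref{est of H3 norm of rho} gives $\|\rho^n(t)\|_{H^3}^2\le e^{CMt}\|\rho^{\mathrm{in}}\|_{H^3}^2$. For $u^{n+1}$ we apply $\Delta^2$, pair with $\Delta^2u^{n+1}$, and commute. The top order transport term $u^n\cdot\nabla\Delta^2u^{n+1}$ integrates by parts into a term bounded by $\|\mathrm{div}\,u^n\|_\infty$. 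The commutator terms are controlled by Kato--Ponce/Moser estimates of the form $\|D u^n\|_{\infty}\|u^{n+1}\|_{H^4}+\|u^n\|_{H^4}\|Du^{n+1}\|_\infty$, which are fine in $3D$ because $H^4\hookrightarrow W^{2,\infty}$. The magnetic term costs $\|\mathbf J\|_{W^{4,\infty}}$. The force term needs $\|\nabla V\ast\rho^n\|_{H^4}$: its high frequencies are bounded by $\|\rho^n\|_{H^3}$, and its low frequency part $\|\nabla V\ast\rho^n\|_2=\|\rho^n\|_{\dot H^{-1}}\le C(\|\rho^n\|_{L^1}+\|\rho^n\|_{2})$ via Hardy--Littlewood--Sobolev. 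Grönwall then yields $\|u^{n+1}(t)\|_{H^4}^2\le e^{C(M+1)t}(\|u^{\mathrm{in}}\|_{H^4}^2+Ct\,e^{CMt}\|\rho^{\mathrm{in}}\|_{H^3}^2)$. This is at most $M$ once $T=T(M)$ is small.

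Step 2 is contraction in a weak norm. We estimate $\|u^{n+1}-u^n\|_{L^\infty_tL^2}$ by an $L^2$ energy estimate on the difference equation. The transport difference $(u^n-u^{n-1})D_xu^n$ is bounded using the uniform $W^{1,\infty}$ bound on $u^n$. The forcing difference obeys $\|\nabla V\ast(\rho^n-\rho^{n-1})\|_2=\|\rho^n-\rho^{n-1}\|_{\dot H^{-1}}$, and Lemma \ref{Stability H-1} controls this, for $T$ small enough that $3MT+2Te^{MCT}<1/2$, by $CT\|u^n-u^{n-1}\|^2_{L^\infty L^2}$. Combining the two estimates gives a contraction factor $CT<1$. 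So $(\rho^n,u^n)$ is Cauchy in $L^\infty_t\dot H^{-1}\times L^\infty_tL^2$, and interpolation with the uniform bounds gives strong convergence in $H^{3-}$. The limit solves \eqref{Magnetized Euler Poisson Cauchy problem}, and lies in the stated spaces by weak-$*$ lower semicontinuity. Uniqueness follows from the same difference estimates applied to two solutions with the same data.

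The main obstacle is Step 1, specifically closing the top-order $H^4$ estimate for $u$ without derivative loss and controlling $\nabla V\ast\rho$ in $H^4$ using only $\rho\in H^3$. I would handle the derivative loss with the commutator estimate above and the symmetrization of the top-order term. For the force term, I would use the Fourier splitting already described to deal with the low frequencies, where $\rho$ lying only in $H^3\cap L^1$ is what forces the Hardy--Littlewood--Sobolev argument.
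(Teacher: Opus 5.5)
Your proposal is correct and is essentially the paper's proof. Your iteration $u^{n+1}=w[u^{n}]$ is the paper's map $v\mapsto w[v]$ on the ball $\mathfrak{X}_{T,M}$: it is kept in the ball by the same $\Delta^{2}$ energy estimate together with Lemma \ref{est of H3 norm of rho}, and made a contraction in $L^{\infty}_{t}L^{2}_{x}$ by the same $L^{2}$ difference estimate combined with Lemma \ref{Stability H-1}. The differences are cosmetic: you use Kato--Ponce commutator bounds instead of expanding $\Delta^{2}(vD_{x}w)$ and $\Delta^{2}(w\mathbf{J})$ by hand, and your limit passage via Cauchy sequences, interpolation and weak-$*$ lower semicontinuity (plus uniqueness among arbitrary solutions) makes explicit what the paper compresses into the unproved claim that $(\mathfrak{X}_{T,M},D)$ is complete and into uniqueness of the fixed point.
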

\begin{proof}
\textbf{Step 1.} Fix \(M>1\) and \(T>0\) to be chosen later and set 
\begin{align*}
\mathfrak{X}_{T,M}
=
\Big\{
v(0)=u^{\mathrm{in}}, \left\Vert v\right\Vert_{L^{\infty}([0,T];L^{2}(\mathbb{R}^{3}))}^{2}+\left\Vert \Delta_{x}^{2}v\right\Vert_{L^{\infty}([0,T];L^{2}(\mathbb{R}^{3}))}^{2}  \leq M
\Big\}\subset L^{\infty}([0,T];H^{4}(\mathbb{R}^{3})).
\end{align*}
Equip $\mathfrak{X}_{T,M}$  with the metric 
\begin{equation}
D(v_1,v_2)
\coloneqq
\left\Vert v_{1}-v_{2}\right\Vert_{L^{\infty}([0,T];L^{2}(\mathbb{R}^{3}))} .
\tag{3}
\end{equation}
Note that $(\mathfrak{X}_{T,M},D)$ is a complete metric space.
Given $v\in \mathfrak{X}_{T,M}$ denote by $\rho[v]\in L^{\infty}([0,T];H^{3}(\mathbb{R}^{3}))$ the solution of  
\begin{align}
\begin{cases}
\partial_t\rho+\mathrm{div}_{x}(\rho v)=0,\\
\rho(0,\cdot)=\rho^{\mathrm{in}}.
\end{cases} \label{transporteq}    
\end{align}
Define $\Phi[v]\coloneqq V\ast \rho[v]$ and consider the solution $w[v]$ of the linear equation
\begin{align}
\partial_{t}w+vD_{x} w+w\mathbf{J}+\nabla_{x}\Phi[v]=0, \ w(0,\cdot)=u^{\mathrm{in}}. 
\label{eq for w}    \end{align}
We claim that $v\mapsto w[v]$ is a contraction from $\mathfrak{X}_{T,M}$ to itself for appropriate choice of $T$ and $M$.\\
\textbf{Step 2.}  In this step we establish that $w[v]\in \mathfrak{X}_{T,M}$ for well chosen $T,M$. For brevity, we set $w=w[v]$ and $\rho=\rho[v]$. In the sequel $C$ stands for any constant independent of $T,M$ and may vary between the different terms.  Multiplying by $w $ equation \eqref{eq for w} and integrating  we get 
\begin{align*}
\frac{\dd}{\dd t}\left\Vert w(t,\cdot)\right\Vert_{2}^{2}=-\int_{\mathbb{R}^{3}}v\cdot \nabla_{x}\left\vert w\right\vert^{2}(t,x)\ \dd x-
2\int_{\mathbb{R}^{3}}\nabla_{x}\Phi[v]\cdot w(t,x)\ \dd x. 
\end{align*}
Note that we used that $w\mathbf{J}\cdot w=0$. Therefore, after integration by parts and using that $\left\Vert \mathrm{div}_{x}v\right\Vert_{\infty}\leq CM$ and that $\left\Vert \nabla_{x}V\ast \rho(t,\cdot)\right\Vert_{2}\leq C\left\Vert \rho(t,\cdot)\right\Vert_{\frac{6}{5}}$ we get 
\begin{align}
 \frac{\dd}{\dd t}\left\Vert w(t,\cdot)\right\Vert_{2}^{2}&\leq CM\left\Vert w(t,\cdot)\right\Vert_{2}^{2}+\left\Vert w(t,\cdot)\right\Vert_{2}^{2}+\left\Vert \nabla_{x}\Phi[v](t,\cdot)\right\Vert_{2}^{2} \notag\\
 &\leq CM(\left\Vert w(t,\cdot)\right\Vert_{2}^
{2}+\left\Vert \rho(t,\cdot)\right\Vert_{\frac{6}{5}}^{2})\leq CM(\left\Vert w(t,\cdot)\right\Vert_{2}^{2}+e^{MCT}). \label{time der of Lp norm of w} 
\end{align}
In the last inequality we applied Lemma \ref{est of H3 norm of rho}. 
We proceed by calculating time derivative of $\left\Vert \Delta_{x}^{2}w(t,\cdot)\right\Vert_{2}^{2}$. We will make use of the following formula for the Laplacian: 
\begin{align}
\Delta^2_{x}(vD_{x}w)
={}& (\Delta^2_{x} v)D_{x}w + v\,\Delta^2_{x} D_{x}w \notag\\
&+ 4\sum_{i=1}^3 (\partial_{x_{i}} v)(\partial_{x_i} \Delta_{x} D_{x}w)
+ 4\sum_{i=1}^3 (\partial_{x_i} \Delta_{x} v)(\partial_{x_{i}} D_{x}w) \notag\\
&+ 2(\Delta_{x} v)(\Delta_{x} D_{x}w)
+ 4\sum_{1\leq i,j\leq 3} (\partial_{x_{i}x_{j}}v)(\partial_{x_{i}x_{j}}D_{x}w).
\end{align}
Similarly, it holds that 
\begin{align}
\Delta^2_{x}(w\mathbf{J})
={}& (\Delta^2_{x} w)\mathbf{J} + w\,\Delta^2_{x} \mathbf{J}  \notag\\
&+ 4\sum_{i=1}^3 (\partial_{x_{i}} w)(\partial_{x_i} \Delta_{x} \mathbf{J})
+ 4\sum_{i=1}^3 (\partial_{x_{i}} \Delta_{x} w)(\partial_{x_{i}} \mathbf{J}) \notag \\
&+ 2(\Delta_{x} w)(\Delta_{x}\mathbf{J})
+ 4\sum_{1\leq i,j\leq 3} (\partial_{x_ix_j}w)(\partial_{x_ix_j}\mathbf{J}). \label{Delta uJ formula}   
\end{align}
Moreover, using that $-\Delta_{x}V=\delta_{0}$ we see that 
\begin{align*}
\Delta_{x}^{2}\nabla_{x}V\ast \rho=-\nabla_{x}\Delta_{x}\rho.  \end{align*}
Consequently, taking the bi-Laplacian $\Delta^{2}_{x}$ in \eqref{eq for w} we obtain 
\begin{align*}
&\partial_{t}\Delta_{x}^{2}w+ (\Delta^2_{x} v)D_{x}w + v\,\Delta^2_{x} D_{x}w \notag\\
&+ 4\sum_{i=1}^3 (\partial_{x_{i}} v)(\partial_{x_i} \Delta_{x} D_{x}w)
+ 4\sum_{i=1}^3 (\partial_{x_i} \Delta_{x} v)(\partial_{x_{i}} D_{x}w) \notag\\
&+ 2(\Delta_{x} v)(\Delta_{x} D_{x}w)
+ 4\sum_{1\leq i,j\leq 3} (\partial_{x_{i}x_{j}}v)(\partial_{x_{i}x_{j}}D_{x}w)\\
&+(\Delta^2_{x} w)\mathbf{J} + w\,\Delta^2_{x} \mathbf{J}  \notag\\
&+ 4\sum_{i=1}^3 (\partial_{x_{i}} w)(\partial_{x_i} \Delta_{x} \mathbf{J})
+ 4\sum_{i=1}^3 (\partial_{x_{i}} \Delta_{x} w)(\partial_{x_{i}} \mathbf{J}) \notag \\
&+ 2(\Delta_{x} w)(\Delta_{x}\mathbf{J})
+ 4\sum_{1\leq i,j\leq 3} (\partial_{x_ix_j}w)(\partial_{x_ix_j}\mathbf{J})-\nabla_{x}\Delta_{x}\rho=0. 
\end{align*}
Multiplying by $ \Delta^{2}_{x}w$  the above equation we get 
\begin{align}
 &\frac{1}{2}\frac{\dd}{\dd t}\left\Vert \Delta_{x}^{2}w(t,\cdot)\right\Vert_{2}^{2} \notag\\=&-\int_{\mathbb{R}^{3}}\Delta_{x}^{2}vD_{x}w\cdot \Delta_{x}^{2}w\ \dd x-\frac{1}{2}\int_{\mathbb{R}^{3}}v\cdot \nabla_{x}\left\vert \Delta^{2}_{x}w\right\vert^{2}\ \dd x \notag\\
 &-4\sum_{i=1}^{3}\int_{\mathbb{R}^{3}}(\partial_{x_{i}}v)\cdot (\partial_{x_{i}}\Delta_{x}D_{x}w)\Delta^{2}_{x}w\ \dd x-4\sum_{i=1}^{3}\int_{\mathbb{R}^{3}}(\partial_{x_{i}}\Delta_{x}v)(\partial_{x_{i}}D_{x}w)\Delta^{2}_{x}w\ \dd x \notag\\
&-2\int_{\mathbb{R}^{3}}(\Delta_{x}v)(\Delta_{x}D_{x}w)\Delta^{2}_{x}w \ \dd x-4\sum_{i=1}^{3}\int_{\mathbb{R}^{3}}(\partial_{x_{i}x_{j}}v)\cdot (\partial_{x_{i}x_{j}}D_{x}w)\Delta^{2}_{x}w \ \dd x \notag\\
&-\int_{\mathbb{R}^{3}}w\Delta_{x}^{2}\mathbf{J}\cdot \Delta^{2}_{x}w \ \dd x -4\sum_{i=1}^{3}\int_{\mathbb{R}^{3}}(\partial_{x_{i}}w)(\partial_{x_{i}}\Delta_{x} \mathbf{J})\cdot \Delta_{x}^{2}w\ \dd x-4\sum_{i=1}^{3}\int_{\mathbb{R}^{3}}(\partial_{x_{i}}\Delta_{x}w)(\partial_{x_{i}}\mathbf{J})\cdot \Delta_{x}^{2}w\ \dd x \notag\\
&-2\int_{\mathbb{R}^{3}}(\Delta_{x}w)\Delta_{x}\mathbf{J}\cdot\Delta_{x}^{2}w \ \dd x-4\sum_{1\leq i,j\leq 3}\int_{\mathbb{R}^{3}}(\partial_{x_{i}x_{j}}w)\partial_{x_{i}x_{j}}\mathbf{J}\cdot\Delta_{x}^{2}w \ \dd x+\int_{\mathbb{R}^{3}}\nabla_{x}\Delta_{x}\rho\cdot \Delta^{2}_{x}w \ \dd x.  \label{rhs time der of bilap}
\end{align}
We abbreviate by $\{I_{k}\}_{k=1}^{6}$ the first $6$ terms in \eqref{rhs time der of bilap} and by $\{J_{k}\}_{k=1}^{6}$ the 6 last terms in \eqref{rhs time der of bilap}. Then, we may write concisely 
\begin{align*}
\frac{1}{2}\frac{\dd}{\dd t}\left\Vert \Delta^{2}_{x}w(t,\cdot)\right\Vert_{2}^{2}=\sum_{k=1}^{6}I_{k}+\sum_{k=1}^{6}J_{k}.      
\end{align*}
We proceed by estimating the terms $I_{k}$ and $J_{k}$. \\ 
\textbf{Estimate on the $I_{k}$'s.} Thanks to the Sobolev embedding $H^{2}(\mathbb{R}^{3})\hookrightarrow L^{\infty}(\mathbb{R}^{3})$ and the interpolation estimate 
$\left\Vert w\right\Vert_{H^{4}} \leq C\left(\left\Vert w\right\Vert_{2}+\left\Vert \Delta^{2}_{x}w\right\Vert_{2} \right)$
we have 
\begin{align*}
\left\Vert D_{x}w(t,\cdot)\right\Vert_{\infty}\leq C\left\Vert w(t,\cdot)\right\Vert_{H^{3}}\leq C\left(\left\Vert w(t,\cdot)\right\Vert_{2}+\left\Vert \Delta^{2}_{x}w(t,\cdot)\right\Vert_{2}  \right).       
\end{align*}
Therefore we can estimate $I_{1}$ as \begin{align}
I_{1}&\leq \left\Vert \Delta_{x}^{2}v(t,\cdot)\right\Vert_{2}^{2}\left\Vert D_{x}w(t,\cdot)\right\Vert_{\infty}^{2}+\left\Vert \Delta^{2}_{x}w(t,\cdot)\right\Vert_{2}^{2} \notag\\
&\leq CM(\left\Vert w(t,\cdot)\right\Vert_{2}^{2}+\left\Vert \Delta_{x}^{2}w(t,\cdot)\right\Vert_{2}^{2})+\left\Vert \Delta_{x}^{2}w(t,\cdot)\right\Vert_{2}^{2}\leq CM\left(\left\Vert w(t,\cdot)\right\Vert_{2}^{2}+\left\Vert \Delta^{2}_{x}w(t,\cdot)\right\Vert_{2}^{2}  \right) \label{I1 est}.       
\end{align}
To estimate $I_{2}$ and $I_{3}$ we use that $\left\Vert D_{x}v\right\Vert_{\infty}\leq C\left\Vert v\right\Vert_{L^{\infty}_{t}H^{3}_{x}}\leq CM$ to find that 
\begin{align}
\int_{\mathbb{R}^{3}}v\cdot \nabla_{x}\left\vert \Delta_{x}^{2}w\right\vert^{2}\ \dd x=\int_{\mathbb{R}^{3}}\mathrm{div}_{x}(v)\left\vert \Delta^{2}_{x}w\right\vert^{2}\ \dd x\leq \left\Vert \mathrm{div}_{x}v\right\Vert_{\infty}\left\Vert \Delta_{x}^{2}w(t,\cdot)\right\Vert_{2}^{2}\leq CM\left\Vert \Delta_{x}^{2}w(t,\cdot)\right\Vert_{2}^{2}  \label{I2 est}        
\end{align}
and 
\begin{align}
I_{3}&\leq C\left\Vert D_{x}v\right\Vert_{\infty}\left(\left\Vert D_{x}^{2}w(t,\cdot)\right\Vert_{2}^{2} +\left\Vert \Delta_{x}^{2}w(t,\cdot)\right\Vert_{2}^{2} \right) \notag\\
&\leq C\left\Vert D_{x}v\right\Vert_{\infty}\left(\left\Vert w(t,\cdot)\right\Vert_{2}^{2}+\left\Vert \Delta^{2}_{x}w(t,\cdot)\right\Vert_{2}^{2}  \right)\leq CM\left(\left\Vert w(t,\cdot)\right\Vert_{2}^{2}+\left\Vert\Delta^{2}_{x}w(t,\cdot) \right\Vert^{2}_{2}  \right). \label{I3 est}      
\end{align}
To estimate $I_{4}$, we have 
\begin{align*}
I_{4}&\leq C\left(\left\Vert \Delta_{x}^{2}w(t,\cdot)\right\Vert_{2}^{2}+\left\Vert D_{x}^{2}w\right\Vert_{\infty}^{2}\left\Vert D_{x}\Delta_{x}v(t,\cdot)\right\Vert_{2}^{2}\right)\\
&\leq C\left(\left\Vert \Delta_{x}^
{2}w(t,\cdot)\right\Vert_{2}^{2}+CM(\left\Vert w(t,\cdot)\right\Vert_{2}^{2}+\left\Vert \Delta^{2}_{x}w(t,\cdot)\right\Vert_{2}^{2})\right)\leq CM\left(\left\Vert w(t,\cdot)\right\Vert_{2}^{2}+\left\Vert \Delta_{x}^{2}w(t,\cdot)\right\Vert_{2}^{2}  \right).    
\end{align*}
As for $I_{5}$ and $I_{6}$, they are estimated by 
\begin{align}
I_{5}&\leq C\left\Vert \Delta_{x}v\right\Vert_{\infty}\int_{\mathbb{R}^{3}}\left\vert \Delta_{x}D_{x}w\right\vert\left\vert \Delta^{2}_{x}w\right\vert \ \dd x \notag \\
&\leq CM(\left\Vert \Delta_{x}D_{x}w(t,\cdot)\right\Vert_{2}^{2}+\left\Vert \Delta^{2}_{x}w(t,\cdot)\right\Vert_{2}^{2} )\leq CM(\left\Vert w(t,\cdot)\right\Vert_{2}^{2}+\left\Vert \Delta_{x}^{2}w(t,\cdot)\right\Vert_{2}^{2}) \label{I5 est}     
\end{align}
and similarly 
\begin{align}
I_{6}\leq CM\left(\left\Vert w(t,\cdot)\right\Vert_{2}^{2}+\left\Vert \Delta^{2}_{x}w(t,\cdot)\right\Vert_{2}^{2}\right). \label{I6 est}    
\end{align}
Gathering \eqref{I1 est}-\eqref{I6 est} we obtain 
\begin{align}
\sum_{k=1}^{6}I_{k}\leq CM\left(\left\Vert w(t,\cdot)\right\Vert_{2}^{2}+\left\Vert \Delta_{x}^{2}w(t,\cdot)\right\Vert_{2}^{2}  \right).\label{final est on sum of Ik}     
\end{align}
\textbf{Estimate on the $J_{k}$'s.} We have 
\begin{align}
J_{1}\leq \left\Vert \Delta_{x}^{2}\mathbf{J}\right\Vert_{\infty}\int_{\mathbb{R}^{3}}\left\vert w \right\vert\left\vert \Delta^{2}_{x}w\right\vert\ \dd x&\leq \frac{1}{2}\left\Vert \Delta^{2}_{x}\mathbf{J}\right\Vert_{\infty}\left(\left\Vert w(t,\cdot)\right\Vert_{2}^{2}+\left\Vert \Delta^{2}_{x}w(t,\cdot)\right\Vert^{2}_{2}   \right) \notag\\
&\leq C\left(\left\Vert w(t,\cdot)\right\Vert_{2}^{2}+\left\Vert \Delta^{2}_{x}w(t,\cdot)\right\Vert^{2}_{2}   \right).\label{J1est}         
\end{align}
To bound $J_{2}$, we have
\begin{align}
J_{2}&\leq C\left\Vert D_{x}\Delta_{x}\mathbf{J}\right\Vert_{\infty} \int_{\mathbb{R}^{3}}\left\vert D_{x}w\right\vert\left\vert \Delta_{x}^{2}w\right\vert\ \dd x \notag\\
&\leq C\left\Vert D_{x}\Delta_{x}\mathbf{J}\right\Vert_{\infty}\left(\left\Vert D_{x}w(t,\cdot)\right\Vert_{2}^{2}+\left\Vert \Delta^{2}_{x}w(t,\cdot)\right\Vert_{2}^{2}  \right)\leq C\left(\left\Vert w(t,\cdot)\right\Vert_{2}^{2}+\left\Vert 
\Delta^{2}_{x}w(t,\cdot)\right\Vert^{2}_{2}  \right). \label{J2 est}     
\end{align}
We can estimate $J_{3},J_{4},J_{5}$ similarly. We get  
\begin{align}
J_{3}\leq \left\Vert D_{x}\mathbf{J}\right\Vert_{\infty}\left(\left\Vert D_{x}\Delta_{x}w(t,\cdot)\right\Vert_{2}^{2}+\left\Vert \Delta_{x}^{2}w(t,\cdot)\right\Vert_{2}^{2}  \right)\leq C\left(\left\Vert w(t,\cdot)\right\Vert_{2}^{2}+\left\Vert \Delta_{x}^{2}w(t,\cdot)\right\Vert_{2}^{2}\right),     
\end{align}
\begin{align}
 J_{4}\leq \left\Vert \Delta_{x}\mathbf{J}\right\Vert_{\infty}\left(\left\Vert \Delta_{x}w(t,\cdot)\right\Vert_{2}^{2}+\left\Vert \Delta^{2}_{x}w(t,\cdot)\right\Vert_{2}^{2}  \right)\leq C\left(\left\Vert w(t,\cdot)\right\Vert_{2}^{2} +\left\Vert \Delta_{x}^{2}w(t,\cdot)\right\Vert_{2}^{2} \right) \label{J_{4} est}   
\end{align}
and 
\begin{align}
J_{5}\leq C\left\Vert D_{x}^{2}\mathbf{J}\right\Vert_{\infty} \left(\left\Vert D_{x}^{2}w(t,\cdot)\right\Vert_{2}^{2}+\left\Vert \Delta^{2}_{x}w(t,\cdot)\right\Vert_{2}^{2}  \right)\leq C\left(\left\Vert w(t,\cdot)\right\Vert_{2}^{2}+\left\Vert \Delta^{2}_{x}w(t,\cdot)\right\Vert^{2}_{2}  \right). \label{J5 est}    
\end{align}
Finally, to estimate $J_{6}$ we note that  
\begin{align*}
J_{6}\leq \frac{1}{2}\left(\left\Vert \nabla_{x}\Delta_{x}\rho(t,\cdot)\right\Vert_{2}^{2}+\left\Vert \Delta^{2}_{x}w(t,\cdot)\right\Vert_{2}^{2}  \right).     
\end{align*}
Thus, in view of Lemma \ref{est of H3 norm of rho} we get 
\begin{align}
J_{6}\leq Ce^{MCT}+\left\Vert \Delta_{x}^{2}w(t,\cdot)\right\Vert_{2}^{2}.  \label{est J6}    
\end{align}
Gathering \eqref{J1est}-\eqref{est J6} we get 
\begin{align}
\sum_{k=1}^{6}J_{k}\leq C\left(\left\Vert w(t,\cdot)\right\Vert_{2}^{2}+\left\Vert\Delta^{2}_{x}w(t,\cdot) \right\Vert_{2}^{2}  \right)+Ce^{MCT}.  \label{final est on sum of Jk}   
\end{align}
Set $S(t)\coloneqq \left\Vert w(t,\cdot)\right\Vert_{2}^{2}+\left\Vert \Delta_{x}^{2}w(t,\cdot)\right\Vert_{2}^{2}$. Combining \eqref{time der of Lp norm of w},\eqref{final est on sum of Ik} and \eqref{final est on sum of Jk} we get the following estimate 
\begin{align*}
\frac{\dd}{\dd t}S(t)\leq CMS(t)+Ce^{MCT}.     
\end{align*}
Integrating in time yields 
\begin{align*}
S(t)\leq S(0)+MC\int_{0}^{t}S(\tau)\ \dd \tau+TMCe^{MCT}.    
\end{align*}
Maximizing in time, it follows that 
\begin{align*}
\underset{t\in [0,T]}{\mathrm{ess\sup}}\ S(t)\leq S(0)+MCT\underset{t\in [0,T]}{\mathrm{ess\sup}}\ S(t)+TMCe^{MCT}.     
\end{align*}
Choose $T>0$ so small and $M$ so large so that 
\begin{itemize}
    \item $M>4 S(0)$.
    \item $MCT\leq \frac{1}{16}$ and $TCe^{MCT}\leq \frac{1}{16}$.
\end{itemize}
For this choice we get 
\begin{align*}
\frac{1}{2}\underset{t\in [0,T]}{\mathrm{ess\sup}}S(t)\leq S(0)+\frac{1}{16}M\leq \frac{M}{4}+\frac{M}{16}=\frac{5}{16}M,     
\end{align*}
so that 
\begin{align*}
\left\Vert w\right\Vert_{L^{\infty}([0,T];L^{2}(\mathbb{R}^{3}))}^{2}+\left\Vert \Delta^{2}_{x} w(t,\cdot)\right\Vert^{2}_{L^{\infty}([0,T];L^{2}(\mathbb{R}^{3}))}
=\underset{t\in [0,T]}{\mathrm{ess\sup}}\ S(t)\leq \frac{5}{8}M.     
\end{align*}
Hence, it follows that for this choice of $T$ and $M$ we have $w[v]\in \mathfrak{X}_{T,M}$. 
\\ 
\textbf{Step 3}. In this step we establish that $v\mapsto w[v]$ is a contraction. Given $v_{1},v_{2}\in \mathfrak{X}_{T,M}$ (with $T$ and $M$ as in step 1) denote $w_{1}\coloneqq w[v_{1}],w_{2}\coloneqq w[v_{2}]$ and $\rho_{1}\coloneqq \rho[v_{1}],\rho_{2}=\rho[v_{2}]$. The equation for the difference $w_{1}-w_{2}$ reads 
\begin{align*}
\partial_{t}(w_{1}-w_{2})+v_{1} D_{x}w_{1}-v_{2} D_{x}w_{2}+(w_{1}-w_{2})\mathbf{J}+\nabla_{x}V\ast (\rho_{1}-\rho_{2})=0.     
\end{align*}
Multiplying by $w_{1}-w_{2}$ and integrating we obtain 
\begin{align*}
\frac{\dd}{\dd t}\frac{1}{2}\left\Vert (w_{1}-w_{2})(t,\cdot)\right\Vert_{2}^{2}=&-\frac{1}{2}\int_{\mathbb{R}^{3}}v_{1}\cdot\nabla_{x}\left\vert w_{1}-w_{2}\right\vert^{2}(t,x)\ \dd x\\
&-\int_{\mathbb{R}^{3}}(v_{1}-v_{2}) D_{x}w_{2}\cdot(w_{1}-w_{2})(t,x)\ \dd x    \\
&-\int_{\mathbb{R}^{3}}\nabla_{x}V\ast (\rho_{1}-\rho_{2})\cdot (w_{1}-w_{2})(t,x)\ \dd x\\
=&\frac{1}{2}\int_{\mathbb{R}^{3}}\mathrm{div}_{x}(v_{1})\left\vert w_{1}-w_{2}\right\vert^{2}(t,x)\ \dd x\\
&-\int_{\mathbb{R}^{3}}(v_{1}-v_{2})D_{x}w_{2}\cdot (w_{1}-w_{2})(t,x)\ \dd x\\
&-\int_{\mathbb{R}^{3}}\nabla_{x}V\ast (\rho_{1}-\rho_{2})\cdot (w_{1}-w_{2})(t,x)\ \dd x.
\end{align*}
Therefore, using that $\left\Vert \mathrm{div}_{x}(v_{1})\right\Vert_{\infty}\leq CM $ and that $\left\Vert \nabla_{x}V\ast(\rho_{1}-\rho_{2})(t,\cdot)\right\Vert_{2}^{2}=\left\Vert (\rho_{1}-\rho_{2})(t,\cdot)\right\Vert_{\dot{H}^{-1}}^{2}$ we obtain 
\begin{align*}
\frac{\dd}{\dd t}\left\Vert (w_{1}-w_{2})(t,\cdot)\right\Vert_{2}^{2}&\leq CM\left\Vert (w_{1}-w_{2})(t,\cdot)\right\Vert_{2}^{2}\\
&+\left\Vert D_{x}w_{2}\right\Vert_{\infty} (\left\Vert (w_{1}-w_{2})(t,\cdot)\right\Vert_{2}^{2}+\left\Vert (v_{1}-v_{2})(t,\cdot)\right\Vert_{2}^{2})\\
&+\left\Vert (\rho_{1}-\rho_{2})(t,\cdot)\right\Vert_{\dot{H}^{-1}}^{2}+\left\Vert (w_{1}-w_{2})(t,\cdot)\right\Vert_{2}^{2}.
\end{align*}
Integrating in time and maximizing over $[0,T]$ we get 
\begin{align}
\left\Vert w_{1}-w_{2}\right\Vert^{2}_{L^{\infty}_{t}L^2_{x}}&\leq CMT\left\Vert w_{1}-w_{2}\right\Vert_{L^{\infty}_{t}L^2_{x}}^{2} \notag\\
&+T\left\Vert D_{x}w_{2}\right\Vert_{\infty} \left(\left\Vert w_{1}-w_{2}\right\Vert_{L^{\infty}_{t}L^{2}_{x}}^{2} +\left\Vert v_{1}-v_{2}\right\Vert_{L^{\infty}_{t}L^2_{x}}^{2}\right) \notag\\
&+T\left\Vert \rho_{1}-\rho_{2}\right\Vert_{L^{\infty}_{t}\dot{H}^{-1}_{x}}^{2}+T\left\Vert w_{1}-w_{2}\right\Vert_{L^{\infty}_{t}L^2_{x}}^{2}. \label{L2 norm w1w2}     
\end{align}
By Lemma \ref{Stability H-1} we have that $$T\left\Vert \rho_{1}-\rho_{2}\right\Vert_{L^{\infty}_{t}\dot H^{-1}_{x}}^{2}\leq CT(MT+Te^{MCT})\left\Vert \rho_{1}-\rho_{2}\right\Vert_{L^{\infty}_{t}\dot{H}^{-1}_{x}}^{2} +CT^{2}e^{MCT}\left\Vert v_{1}-v_{2}\right\Vert_{L^{\infty}_{t}L^2_{x}}^{2}.$$
Therefore for $T$ sufficiently small (for instance 
$T$ such that $C(MT+Te^{MCt})\leq \frac{1}{2}$) we get 
\begin{align}
T\left\Vert \rho_{1}-\rho_{2}\right\Vert_{L^{\infty}_{t}\dot{H}^{-1}_{x}}^{2}\leq CT^{2}e^{MCT}\left\Vert v_{1}-v_{2}\right\Vert_{L^{\infty}_{t}L^2_{x}}^{2}.  \label{ine for diff of rho}     
\end{align}
Moreover, by step 2 and Sobolev embedding we have 
\begin{align}
\left\Vert D_{x}w_{2}\right\Vert_{\infty}\leq C\left\Vert w_{2}\right\Vert_{L^{\infty}_{t}H^{4}_{x}}\leq CM . \label{ine for Dxw2}    
\end{align}
Substituting \eqref{ine for diff of rho}-\eqref{ine for Dxw2} inside \eqref{L2 norm w1w2} we obtain 
\begin{align*}
\left\Vert w_{1}-w_{2}\right\Vert_{L^{\infty}_{t}L^2_{x}}^{2}&\leq CMT\left\Vert w_{1}-w_{2}\right\Vert^{2}_{L^{\infty}_{t}L^{2}_{x}}+CMT\left\Vert v_{1}-v_{2}\right\Vert_{L^{\infty}_{t}L^{2}_{x}}^{2}\\
&+CT^{2}e^{MCT}\left\Vert v_{1}-v_{2}\right\Vert_{L^{\infty}_{t}L^{2}_{x}}^{2}. 
\end{align*}
It follows that for $T$ sufficiently small it holds that 
\begin{align*}
\left\Vert w_{1}-w_{2}\right\Vert_{L^{\infty}_{t}L^2_{x}}^{2}\leq (CT^{2}e^{MCT}+CMT)\left\Vert v_{1}-v_{2}\right\Vert_{L^{\infty}_{t}L^{2}_{x}}^{2}.      
\end{align*}
Further shrinking $T$ if necessary as to ensure that $CT^{2}e^{MCT}+CMT<1$ we get 
\begin{align*}
\left\Vert w_{1}-w_{2}\right\Vert_{L^{\infty}_{t}L^2_{x}}\leq \lambda \left\Vert v_{1}-v_{2}\right\Vert_{L^{\infty}_{t}L^2_{x}}  \ \mbox{for }\ 0<\lambda<1.     
\end{align*}
By the Banach contraction principle we deduce there exist a unique fixed point  $u\in \mathfrak{X}_{T,M}$. So $(\rho[u],u)$ is the asserted solution.
\end{proof}
\begin{rem}
Due to the interpolation inequality $\left\Vert \Delta_{x} u\right\Vert_{\infty}\leq C\left(\left\Vert \Delta_{x} u\right\Vert_{2} +\left\Vert \Delta^{2}_{x}u\right\Vert_{2} \right)$ we see that the solution of Theorem \ref{well posedness of EPA} satisfies $\Delta_{x}u\in L^{\infty}([0,T];L^{\infty}(\mathbb{R}^{3}))$. We will need the velocity field $u$ to have bounded Laplacian when proving the semiclassical and mean-field semiclassical limit, which is why it is convenient to work subject to the assumption $(\rho^{\mathrm{in}},u^{\mathrm{in}})\in H^{3}(\mathbb{R}^{3})\times H^{4}(\mathbb{R}^{3})$. Note also that since $\rho$ is governed by a transport equation, it remains a probability density for all times on which the solution is defined. 
\end{rem}

\section{The semiclassical limit}\label{semiclassical limit section}
We start by recalling that the quantum current $J_{\hbar}$ given in \eqref{single body current mag} can be equivalently defined by duality as the unique vector valued Radon measure such that for all $a\in W^{1,\infty}(\mathbb{R}^{3};\mathbb{R}^{3})$ it holds that   
\begin{align*}
\int_{\mathbb{R}^{3}}J_{\hbar}(t,x)\cdot a(x)\ \dd x=-\frac{1}{2}\mathrm{tr}\left((i\hbar\nabla_{x}+A)\vee a R_{\hbar}(t)\right)=
-\frac{1}{2}\sum_{j=1}^{3}\mathrm{tr}\left((i\hbar\partial_{x_{j}}+A_{j})\vee a_{j} R_{\hbar}(t)\right).     
\end{align*}
In fact, this definition motivated the definition of $J_{\hbar,N:1}$ in \eqref{defintion of N body current}. If $\psi_{\hbar}$ is a solution to \eqref{SP equation intro} consider the time-dependent density operator $R_{\hbar}(t)=\vert \psi_{\hbar}(t)\rangle  \vert\langle \psi_{\hbar}(t)\vert$. As explained in the introduction, the 1-body density operator $R_{\hbar}(t)$ is governed by Hartree's equation, which in the magnetic settings reads 
\begin{align}\tag{HEA}
i\hbar \partial_{t}R_{\hbar}=\left[\mathscr{H}_{\hbar,A},R_{\hbar}(t)\right]\ \mbox{where}\ \mathscr{H}_{\hbar,A}=\frac{1}{2}(i\hbar \nabla_{x}+A)^{2}+V\ast \left\vert \psi_{\hbar}\right\vert^{2}. \label{Hartree magnetic}     
\end{align}
It will be convenient to work with Hartree's equation \eqref{Hartree magnetic} when calculating the time derivative of $\mathcal{E}_{\hbar}(t)$. Furthermore note that the quantum modulated energy may be recast as
\begin{align*}
\mathcal{E}_{\hbar}(t)=\mathrm{tr}\left((i\hbar\nabla_{x}+A+u  )^{2}R_{\hbar}(t)\right)+\int_{\mathbb{R}^{3}}V\ast(\rho_{\hbar}-\rho)(\rho_{\hbar}-\rho)(t,x)\ \dd x.      
\end{align*}
\begin{rem}\label{remark about eq qunatum density}
By exactly the same calculation in Lemma 3.3 in \cite{golse2022mean} one proves that $$\partial_{t}\rho_{\hbar}+\mathrm{div}_{x}J_{\hbar}=0.$$
By the same token, it holds that  
\begin{align*}
\partial_{t}\rho_{\hbar,N:1}+\mathrm{div}_{x}J_{\hbar,N:1}=0.     
\end{align*}
These equations will be used when calculating $\frac{\dd}{\dd t}\mathcal{E}_{\hbar}(t)$ and $\frac{\dd}{\dd t}\mathcal{E}_{\hbar,N}(t)$.     
\end{rem}
We aim to calculate the time derivative of the quantum modulated energy. This is the main step in proving the semiclassical limit. We henceforth denote by $\Pi_{j}$ the $j$-th component of the magnetic momentum operator, i.e.  
\begin{align}
\Pi_{j}\coloneqq i\hbar\partial_{x_{j}}+A_{j}. \label{magnetic momentum operator def}    
\end{align}
The following lemma is a key component in ensuring the terms contributed by $A$ will eventually cancel out in the calculation of $\mathcal{E}_{\hbar}(t)$. 
\begin{lem} \label{cancellation lemma}
Suppose that: 
\begin{itemize}
    \item $x\mapsto\mathbf{J}(x)\in M_{3\times 3}(\mathbb{R})$ is anti-symmetric for all $x\in \mathbb{R}^{3}$.
 \item  $u:\mathbb{R}^{3}\rightarrow \mathbb{R}^{3}$ is a given vector field. 
 \item $\Pi_{j}$ is given by \eqref{magnetic momentum operator def}.
\end{itemize}
Then, it holds that 
\begin{align*}
\sum_{1\leq j,k\leq 3}(\Pi_{k}\vee \mathbf{J}_{jk})\vee u_{j}=\sum_{1\leq j,k\leq 3} \Pi_{j}\vee (\mathbf{J}_{kj}\vee u_{k})    
\end{align*}
\end{lem}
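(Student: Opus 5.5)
The plan is to reduce the identity to a termwise statement about anti-commutators of one first-order operator with two multiplication operators. First, in the right-hand side I will rename the summation indices, swapping the roles of $j$ and $k$. This gives
\begin{align*}
\sum_{1\leq j,k\leq 3}\Pi_{j}\vee(\mathbf{J}_{kj}\vee u_{k})=\sum_{1\leq j,k\leq 3}\Pi_{k}\vee(\mathbf{J}_{jk}\vee u_{j}).
\end{align*}
It then suffices to prove, for each fixed pair $(j,k)$, the associativity-type identity $(\Pi_{k}\vee f)\vee g=\Pi_{k}\vee(f\vee g)$ with $f\coloneqq\mathbf{J}_{jk}$ and $g\coloneqq u_{j}$. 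Both $f$ and $g$ are real multiplication operators. I will check everything on a dense core, say $C_{c}^{\infty}(\mathbb{R}^{3})$, assuming $u$ and $A$ smooth enough for all products to make sense, as in the setting where the lemma will be applied.

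Next, I will expand both sides. Since multiplication operators commute, $f\vee g=2fg$, so the right-hand side equals $2\Pi_{k}fg+2fg\Pi_{k}$. The left-hand side expands as
\begin{align*}
(\Pi_{k}f+f\Pi_{k})g+g(\Pi_{k}f+f\Pi_{k})=\Pi_{k}fg+f\Pi_{k}g+g\Pi_{k}f+gf\Pi_{k}.
\end{align*}
Subtracting, the difference LHS$-$RHS is $(f\Pi_{k}g-fg\Pi_{k})+(g\Pi_{k}f-\Pi_{k}gf)$. Here I use $fg=gf$ to write $\Pi_{k}fg=\Pi_{k}gf$.

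The key step is the commutator identity $[\Pi_{k},h]=i\hbar\,\partial_{x_{k}}h$ for any smooth function $h$. This holds because $A_{k}$ commutes with multiplication operators, so only the derivative part of $\Pi_{k}$ contributes. In particular $[\Pi_{k},h]$ is itself a multiplication operator. The difference then becomes
\begin{align*}
f[\Pi_{k},g]-[\Pi_{k},g]f=i\hbar\left(f\,\partial_{x_{k}}g-(\partial_{x_{k}}g)\,f\right)=0,
\end{align*}
because multiplication operators commute. Summing over $j,k$ then yields the lemma.

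I do not expect a genuine obstacle. The only points needing care are the index relabeling and the domain issues, which I handle by working on $C_{c}^{\infty}$. Notably, the argument does not seem to use the anti-symmetry of $\mathbf{J}$ at all. I expect anti-symmetry to matter only later, when this identity is combined with the other magnetic terms in $\frac{\dd}{\dd t}\mathcal{E}_{\hbar}(t)$, for instance through $w\mathbf{J}\cdot w=0$. If the termwise argument turned out to miss some subtlety, for example an ordering convention in $\vee$, my fallback would be to expand both sides fully in terms of $\partial_{x_{k}}$, $A_{k}$, $\mathbf{J}_{jk}$ and $u_{j}$, and use anti-symmetry to cancel any leftover $i\hbar\,\partial_{x_{k}}(\mathbf{J}_{jk}u_{j})$ contributions.
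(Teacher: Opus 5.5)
Your proposal is correct and is essentially the paper's proof. The paper also relabels indices and reduces the claim to the vanishing of the double commutator $[\mathbf{J}_{kj},[u_{k},\Pi_{j}]]$ of multiplication operators, via the identity $T\vee(S\vee Q)=(T\vee S)\vee Q+[S,[Q,T]]$; your direct expansion recovers exactly this term.

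You are also right that anti-symmetry of $\mathbf{J}$ is not needed. The paper invokes it once in each of the two chains of equalities, and the two uses cancel.
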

\begin{proof}
Since $\mathbf{J}$ is anti-symmetric we have 
\begin{align}
\sum_{1\leq j,k\leq 3}(\Pi_{k}\vee \mathbf{J}_{jk})\vee u_{j}=\sum_{1\leq j,k\leq 3}(\Pi_{j}\vee \mathbf{J}_{kj})\vee u_{k}=-\sum_{1\leq j,k\leq 3}(\Pi_{j}\vee \mathbf{J}_{jk})\vee u_{k}. \label{identity1}    
\end{align}
On the other hand, recall the following formula for the anti-commutator 
\begin{align*}
T\vee (S\vee Q)=(T\vee S)\vee Q+\left[S,[Q,T]\right].     
\end{align*}
Therefore, we have that  
\begin{align*}
\Pi_{j}\vee (\mathbf{J}_{kj}\vee u_{k})=(\Pi_{j}\vee \mathbf{J}_{kj})\vee u_{k} +\left[\mathbf{J}_{kj},[u_{k},\Pi_{j}]\right]=-(\Pi_{j}\vee \mathbf{J}_{jk})\vee u_{k}+\left[\mathbf{J}_{kj},[u_{k},\Pi_{j}]\right].\end{align*}
Since $\left[u_{k},\Pi_{j}\right]=-i\hbar\partial_{x_{j}}u_{k}$ it follows that $\left[\mathbf{J}_{kj},\left[u_{k},\Pi_{j}\right]\right]=0$, as a commutator of multiplication operators. Hence we get 
\begin{align}
\sum_{1\leq j,k\leq 3}\Pi_{j}\vee (\mathbf{J}_{kj}\vee u_{k})=-\sum_{1\leq j,k\leq 3}(\Pi_{j}\vee \mathbf{J}_{jk})\vee u_{k}. \label{identity2}    
\end{align}
Comparing \eqref{identity1} with \eqref{identity2} yields the announced result. 
\end{proof}
\begin{thm}\label{thm time derivative of modulated energy}
Let the assumptions of Theorem \ref{main thm semi classical} hold. Let $\psi_{\hbar}$ be the unique solution to \eqref{SP equation intro} with initial data $\psi_{\hbar}^{\mathrm{in}}$ and let  $(\rho,u)$ be the unique solution to \eqref{Magnetized Euler Poisson velocity density intro} with initial data $(\rho^{\mathrm{in}},u^{\mathrm{in}})$ on $[0,T]$. Let $\mathcal{E}_{\hbar}(t)$ be given by \eqref{quantum modulated energy}.  Then, for all $t\in [0,T]$ it holds that 
\begin{align*}
\frac{\dd}{\dd t}\mathcal{\mathcal{E}}_{\hbar}(t)=&-\frac{1}{2}\sum_{1\leq j,k\leq 3}\mathrm{tr}\big((\Pi_{j}+u_{j})\vee((\Pi_{k}+u_{k})\vee(\partial_{x_{k}}u_{j}))R_{\hbar}(t)\big)\\
&+2\int_{\mathbb{R}^{3}}u(t,x)\cdot \nabla_{x} V\ast(\rho_{\hbar}-\rho)(t,x)(\rho_{\hbar}-\rho)(t,x)\ \dd x.     
\end{align*}
\end{thm}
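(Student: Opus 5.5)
We follow the strategy of Golse--Paul \cite{golse2022mean} (Theorem 4.1 there), keeping track of the additional magnetic terms. First, expand the kinetic part of $\mathcal{E}_{\hbar}(t)$:
\begin{align*}
\mathrm{tr}\big((\Pi+u)^{2}R_{\hbar}\big)=\mathrm{tr}\big(\Pi^{2}R_{\hbar}\big)+\sum_{j=1}^{3}\mathrm{tr}\big((\Pi_{j}\vee u_{j})R_{\hbar}\big)+\int_{\mathbb{R}^{3}}\left\vert u\right\vert^{2}\rho_{\hbar}\ \dd x .
\end{align*}
Here $\Pi=(\Pi_{1},\Pi_{2},\Pi_{3})$ and $\Pi^{2}=\sum_{j}\Pi_{j}^{2}=(i\hbar\nabla_{x}+A)^{2}$. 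By the duality definition of $J_{\hbar}$, the middle term equals $-2\int u\cdot J_{\hbar}$. Expanding the potential part gives $\int V\ast\rho_{\hbar}\,\rho_{\hbar}-2\int V\ast\rho\,\rho_{\hbar}+\int V\ast\rho\,\rho$. The total energy $\mathcal{F}_{\hbar}$ is conserved by \eqref{Conservation of energy}, so only the cross terms contribute:
\begin{align*}
\frac{\dd}{\dd t}\mathcal{E}_{\hbar}=\frac{\dd}{\dd t}\Big(-2\int u\cdot J_{\hbar}+\int |u|^{2}\rho_{\hbar}-2\int V\ast\rho\,\rho_{\hbar}+\int V\ast\rho\,\rho\Big).
\end{align*}
We differentiate each term as follows.
\begin{itemize}
\item For $\rho_{\hbar}$, use Remark \ref{remark about eq qunatum density}: $\partial_{t}\rho_{\hbar}=-\mathrm{div}_{x}J_{\hbar}$.
\item For $(\rho,u)$, use \eqref{Magnetized Euler Poisson velocity density intro}.
\item For $\frac{\dd}{\dd t}\mathrm{tr}\big((\Pi_{j}\vee u_{j})R_{\hbar}\big)$, use \eqref{Hartree magnetic}. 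It equals $\mathrm{tr}\big((\Pi_{j}\vee\partial_{t}u_{j})R_{\hbar}\big)+\frac{i}{\hbar}\mathrm{tr}\big([\mathscr{H}_{\hbar,A},\Pi_{j}\vee u_{j}]R_{\hbar}\big)$.
\end{itemize}
This is legitimate at the regularity given by Theorem \ref{Magnetized SP well posed}, namely $\psi_{\hbar}\in C(H^{2}_{A,\hbar})$, and Theorem \ref{well posedness of EPA}; if needed, one first argues for smooth data and then approximates.

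The core computation is the commutator $\frac{i}{\hbar}[\tfrac12\Pi^{2}+V\ast\rho_{\hbar},\Pi_{j}\vee u_{j}]$. We use the magnetic commutation relations
\begin{align*}
[\Pi_{k},\Pi_{j}]=i\hbar(\partial_{x_{k}}A_{j}-\partial_{x_{j}}A_{k})=i\hbar\mathbf{J}_{kj},\qquad [\Pi_{k},f]=i\hbar\partial_{x_{k}}f .
\end{align*}
With these, $\frac{i}{\hbar}[\tfrac12\Pi^{2},\Pi_{j}\vee u_{j}]$ splits into two pieces.
\begin{itemize}
\item The non-magnetic piece is $-\tfrac12\sum_{k}\Pi_{j}\vee(\Pi_{k}\vee\partial_{x_{k}}u_{j})$, up to ordering corrections, exactly as in \cite{golse2022mean}.
\item The Lorentz-type piece is built from $\tfrac12\sum_{k}(\Pi_{k}\vee\mathbf{J}_{kj})\vee u_{j}$, with signs to be fixed.
\end{itemize}
The potential part contributes $\frac{i}{\hbar}[V\ast\rho_{\hbar},\Pi_{j}\vee u_{j}]=2u_{j}\partial_{x_{j}}V\ast\rho_{\hbar}$.

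Lemma \ref{cancellation lemma} converts the magnetic piece into $\sum\Pi_{j}\vee(\mathbf{J}_{kj}\vee u_{k})=2\sum\Pi_{j}\vee(\mathbf{J}_{kj}u_{k})$. Its trace against $R_{\hbar}$ is $-4\int (u\mathbf{J})\cdot J_{\hbar}$, a multiple of $\int (u\mathbf{J})\cdot J_{\hbar}$. This must cancel against the term $-2\int J_{\hbar}\cdot\partial_{t}u\ni 2\int J_{\hbar}\cdot(u\mathbf{J})$ produced by the Euler equation. The remaining $u\mathbf{J}$ contribution, coming from $\frac{\dd}{\dd t}\int |u|^{2}\rho_{\hbar}$, vanishes identically since $u\mathbf{J}\cdot u=0$ by antisymmetry.

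The remaining non-magnetic bookkeeping is handled as follows.
\begin{itemize}
\item The terms $-2\int J_{\hbar}\cdot(uD_{x}u)$, $2\int u\cdot D_{x}u\, J_{\hbar}$-type terms and $\int |u|^{2}\partial_{t}\rho_{\hbar}$ recombine, together with the $\Pi\vee\Pi$ commutator term, into the symmetric expression $-\frac12\sum_{j,k}\mathrm{tr}\big((\Pi_{j}+u_{j})\vee((\Pi_{k}+u_{k})\vee\partial_{x_{k}}u_{j})R_{\hbar}\big)$. This is done by expanding the latter: the $u\vee(u\vee\partial u)$ part gives $-4\int \rho_{\hbar}\, u D_{x}u\cdot u$, and the mixed parts give $J_{\hbar}$ terms.
\item The potential terms $2\int J_{\hbar}\cdot\nabla V\ast\rho$, $-2\int u\cdot\nabla V\ast\rho_{\hbar}\,\rho_{\hbar}$, $-2\int V\ast\rho\,\partial_{t}\rho_{\hbar}$ and $-2\int V\ast\rho_{\hbar}\,\partial_{t}\rho$, and $\frac{\dd}{\dd t}\int V\ast\rho\,\rho$, are combined using the continuity equations. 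The $J_{\hbar}$ pieces cancel, leaving $2\int u\cdot\nabla V\ast(\rho_{\hbar}-\rho)(\rho_{\hbar}-\rho)$. Here one uses that $\int u\cdot\nabla V\ast\rho\,\rho$ type terms appear, and the oddness of $\nabla V$ where needed.
\end{itemize}

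The main obstacle is the sign and ordering bookkeeping in the magnetic commutator $[\Pi^{2},\Pi_{j}\vee u_{j}]$. It produces both $\Pi_{k}\vee\mathbf{J}_{kj}$ and ordering terms, and the cancellation only works once these are written in the form of Lemma \ref{cancellation lemma}. We will carry this out first, and check it on a uniform field $A=\tfrac12 B\times x$ to pin down signs, before assembling the rest.
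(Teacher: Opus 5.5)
Your route is the paper's own: expand $\mathcal{E}_{\hbar}$, remove $\mathcal{F}_{\hbar}$ by energy conservation, differentiate $\sum_j\mathrm{tr}((\Pi_j\vee u_j)R_{\hbar})$ through \eqref{Hartree magnetic}, and cancel the Lorentz term against the $u\mathbf{J}$ term of the Euler equation via Lemma~\ref{cancellation lemma}. The argument closes, but several constants you wrote down are wrong, and as stated two of the claimed cancellations would fail.

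\textbf{The magnetic cancellation.} The commutator is exactly $\frac{i}{\hbar}[\tfrac12\Pi^{2},\Pi_j\vee u_j]=\tfrac12\sum_k(\Pi_k\vee\mathbf{J}_{jk})\vee u_j-\tfrac12\sum_k\Pi_j\vee(\Pi_k\vee\partial_{x_k}u_j)$. Note the index order $jk$, and that no ordering corrections arise. For multiplication operators $f,g$ one has $(\Pi_k\vee f)\vee g=2\,\Pi_k\vee(fg)$, so the magnetic part summed over $j$ equals $\sum_k\Pi_k\vee(u\mathbf{J})_k$. Its trace against $R_{\hbar}$ is $-2\int(u\mathbf{J})\cdot J_{\hbar}$, not $-4\int(u\mathbf{J})\cdot J_{\hbar}$: you dropped the $\tfrac12$ of $\tfrac12\Pi^{2}$. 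Only the coefficient $-2$ cancels the $+2\int J_{\hbar}\cdot(u\mathbf{J})$ produced by $\partial_t u$.

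\textbf{The $u$-only term.} The $u\vee(u\vee\partial u)$ part of the target expression is $-2\int\rho_{\hbar}\,(uD_xu)\cdot u$, not $-4\int\rho_{\hbar}\,(uD_xu)\cdot u$. Only $-2$ matches the contribution $2\int u\cdot\partial_t u\,\rho_{\hbar}$.

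\textbf{The potential terms.} The Hartree commutator gives $+2\int u\cdot\nabla_xV\ast\rho_{\hbar}\,\rho_{\hbar}$; you wrote a minus sign. You also need $-2\int u\cdot\nabla_xV\ast\rho\,\rho_{\hbar}$, which comes from $\frac{\dd}{\dd t}\int|u|^{2}\rho_{\hbar}$. The remaining cross terms supply $-2\int\rho u\cdot\nabla_xV\ast\rho_{\hbar}$ and $+2\int\rho u\cdot\nabla_xV\ast\rho$. These four terms then factor directly into $2\int u\cdot\nabla_xV\ast(\rho_{\hbar}-\rho)(\rho_{\hbar}-\rho)$.
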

\begin{proof}
\textbf{1}.
Note that
\begin{align*}
\mathcal{E}_{\hbar}(t)=\mathcal{F}_{\hbar}(t)&+\sum_{j=1}^{3}\mathrm{tr}\left((i\hbar\partial_{x_{j}}+A_{j})\vee u_{j}R_{\hbar}(t)\right)+\int_{\mathbb{R}^{3}}\left\vert u\right\vert^{2}\rho_{\hbar}(t,x)\ \dd x\\
&+\int_{\mathbb{R}^{3}}V\ast \rho (t,x)\rho(t,x)\ \dd x-2\int_{\mathbb{R}^{3}}V\ast \rho_{\hbar}(t,x)\rho(t,x)\ \dd x. 
\end{align*}
Therefore, by  conservation of energy (equation \eqref{Conservation of energy}),
we have 
\begin{align*}
\frac{\dd}{\dd t}\mathcal{E}_{\hbar}(t)=&\frac{\dd}{\dd t}\sum_{j=1}^{3}\mathrm{tr}\left((i\hbar\partial_{x_{j}}+A_{j})\vee u_{j}R_{\hbar}
(t)\right)+\frac{\dd}{\dd t}\int_{\mathbb{R}^{3}}\left\vert u\right\vert^{2}\rho_{\hbar}(t,x)\ \dd x\\
&+\frac{\dd}{\dd t}\int_{\mathbb{R}^{3}}V\ast \rho(t,x)\rho(t,x)\ \dd x-2\frac{\dd}{\dd t}\int_{\mathbb{R}^{3}}V\ast \rho_{\hbar}(t,x)\rho(t,x)\ \dd x\coloneqq \sum_{j=1}^{4}I^{j}(t). 
\end{align*}
\textbf{2}.  First, we assert that the following identity holds:
\begin{align}
I^{1}(t)+I^{2}(t)&=\sum_{j=1}^{3}\mathrm{tr}\big((u_{j}+\Pi_{j})\vee(\partial_{t}u_{j}+(uD_{x}u )_{j})R_{\hbar}(t)\big) \notag \\
&-\frac{1}{2}\sum_{1\leq j,k\leq 3}\mathrm{tr}\big((u_{j}+\Pi_{j})\vee((u_{k}+\Pi_{k})\vee\partial_{x_{k}}u^{j})R_{\hbar}(t)\big).    \notag\\
&+\frac{1}{2}\sum_{1\leq j,k\leq 3}\mathrm{tr}\left((\Pi_{k}\vee \mathbf{J}_{jk})\vee u_{j} R_{\hbar}(t)\right)
\notag\\&+2\int_{\mathbb{R}^{3}}\rho_{\hbar}(t,x)\,\nabla_{x} V\ast \rho_{\hbar}(t,x)\cdot u(t,x) \,\dd x.\label{eq:-2}
\end{align}
To prove \eqref{eq:-2} we compute: 
\begin{align}
I^{1}+I^{2}&=\frac{\dd}{\dd t}\sum_{j=1}^{3}\mathrm{tr}\left(\Pi_{j}\vee u_{j}R_{\hbar}(t)\right)+\frac{\dd}{\dd t}\mathrm{tr}\Big(\left\vert u\right\vert^{2}R_{\hbar}(t)\Big) \notag\\
&=\mathrm{tr}\Big(\Big(\sum_{j=1}^{3}\Pi_{j}\vee u_{j}+\left\vert u\right\vert^{2}\Big)\partial_{t}R_{\hbar}(t) \Big) \notag+\mathrm{tr}\Big(\partial_{t}\Big(\sum_{j=1}^{3}\Pi_{j}\vee u_{j}+\left\vert u\right\vert^{2} \Big)R_{\hbar}(t)\Big) \notag\\
&=\frac{1}{i\hbar}\mathrm{tr}\Big(\Big(\sum_{j=1}^{3}\Pi_{j}\vee u_{j}
+\left\vert u\right\vert^{2} \Big)\left[\mathscr{H}_{\hbar,A},R_{\hbar}(t)\right]\Big) \notag+\mathrm{tr}\Big(\partial_{t}\Big(\sum_{j=1}^{3}\Pi_{j}\vee u_{j}+\left\vert u\right\vert^{2} \Big)R_{\hbar}(t)\Big) \notag\\
&=\frac{i}{\hbar}\mathrm{tr}\Big(\Big[\mathscr{H}_{\hbar,A},\sum_{j=1}^{3}\Pi_{j}\vee u_{j}+\left\vert u\right\vert^{2} \Big]R_{\hbar}(t)\Big)
+\mathrm{tr}\Big(\partial_{t}\Big(\sum_{j=1}^{3}\Pi_{j}\vee u_{j}+\left\vert u\right\vert^{2} \Big)R_{\hbar}(t)\Big). \label{first calculation of I1I2}
\end{align}
The right-hand side of \eqref{first calculation of I1I2} reads 
\begin{align*}
&\sum_{j=1}^{3}\mathrm{tr}\Big(\Big(\partial_{t}+\frac{i}{\hbar}\Big[\frac{1}{2}\sum_{k=1}^{3}\Pi_{k}^{2},\cdot\Big]\Big)\Big(\Pi_{j}\vee u_{j}+\frac{1}{2}u_{j}\vee u_{j}\Big)R_{\hbar}(t)\Big
)\\
&+ \sum_{j=1}^{3}\mathrm{tr}\Big(\frac{i}{\hbar}\Big[V\ast \rho_{\hbar},\cdot\Big]\Big(\Pi_{j}\vee u_{j}+\frac{1}{2}u_{j}\vee u_{j} \Big)R_{\hbar}(t)\Big)\\
&\coloneqq \sum_{j=1}^{3}\mathrm{tr}\left(L_{1j}R_{\hbar}(t)\right)+\sum_{j=1}^{3}\mathrm{tr}\left(L_{2j}R_{\hbar}(t)\right). 
\end{align*}
We start with the term $L_{1j}$. Using the  Leibniz rule: $\left[A,B\vee C\right]=\left[A,B\right]\vee C+\left[A,C\right]\vee B,$ we can write $L_{1j}$ as follows: 
\begin{align}
L_{1j}=&\Big(\partial_{t}+\frac{i}{\hbar}\Big[\frac{1}{2}\sum_{k=1}^{3}\Pi_{k}^{2},\cdot\Big]\Big)\Big((\frac{1}{2}u_{j}+\Pi_{j})\vee u_{j}\Big) \notag\\
=&\Big(\Big(\partial_{t}+\frac{i}{\hbar}\Big[\frac{1}{2}\sum_{k=1}^{3}\Pi_{k}^{2},\cdot\Big]\Big)(\frac{1}{2}u_{j}+\Pi_{j})\Big)\vee u_{j} \notag\\
&+(\frac{1}{2}u_{j}+\Pi_{j})\vee \Big(\Big(\partial_{t}+\frac{i}{\hbar}\Big[\frac{1}{2}\sum_{k=1}^{3}\Pi_{k}^{2},\cdot\Big]\Big)u_{j}\Big) \notag\\
=&\Big(\Big(\partial_{t}+\frac{i}{\hbar}\Big[\frac{1}{2}\sum_{k=1}^{3}\Pi_{k}^{2},\cdot\Big]\Big)u_{j}\Big)\vee u_{j}+\Pi_{j}\vee \Big(\Big(\partial_{t}+\frac{i}{\hbar}\Big[\frac{1}{2}\sum_{k=1}^{3}\Pi_{k}^{2},\cdot\Big]\Big)u_{j}\Big) \notag\\
&+\frac{i}{\hbar}\Big[\frac{1}{2}\sum_{k=1}^{3}\Pi_{k}^{2},\Pi_{j}\Big]\vee u_{j}. \label{L1j rhs}
\end{align}
We compute that 
\begin{align*}
\left[\Pi_{k}^{2},\Pi_{j}\right]=\Pi_{k}\vee\left[\Pi_{k},\Pi_{j}\right]=\Pi_{k}\vee \left[i\hbar\partial_{x_{k}}+A_{k},i\hbar \partial_{x_{j}}+A_{j}\right]=\Pi_{k}\vee (i\hbar \partial_{x_{k}}A_{j}-i\hbar\partial_{x_{j}}A _{k}), 
\end{align*}
and hence we get   
\begin{align}
\frac{i}{\hbar}\Big[\frac{1}{2}\sum_{k=1}^{3}\Pi_{k}^{2},\Pi_{j}\Big]=\frac{1}{2}\sum_{k=1}^{3}\Pi_{k}\vee(\partial_{x_{j}}A_{k}-\partial_{x_{k}}A_{j})=\frac{1}{2}\sum_{k=1}^{3}\Pi_{k}\vee \mathbf{J}_{jk}.  \label{last term 1body}     
\end{align}
Inserting \eqref{last term 1body} inside \eqref{L1j rhs} we get 
\begin{align}
L_{1j}&=(u_{j}+\Pi_{j})\vee \Big(\Big(\partial_{t}+\frac{i}{\hbar}\left[\frac{1}{2}\sum_{k=1}^{3}\Pi_{k}^{2},\cdot\right]\Big)u_{j}\Big)+\frac{1}{2}\sum_{k=1}^{3}(\Pi_{k}\vee \mathbf{J}_{jk})\vee u_{j} \notag\\
&=(u_{j}+\Pi_{j})\vee \Big(\partial_{t}u_{j}+\sum_{k=1}^{3}u_{k}\partial_{x_{k}}u_{j}\Big)+(u_{j}+\Pi_{j})\vee \Big(\frac{i}{\hbar}\Big[\frac{1}{2}\sum_{k=1}^{3}\Pi_{k}^{2},u_{j}\Big]-\sum_{k=1}^{3}u_{k}\partial_{x_{k}}u_{j}\Big) \notag\\
&+\frac{1}{2}\sum_{k=1}^{3}(\Pi_{k}\vee \mathbf{J}_{jk})\vee u_{j}. \label{second right hand side L1j} 
\end{align}
Note that 
\begin{align*}
\frac{i}{2\hbar}\left[\Pi_{k}^{2},u_{j}\right]=\frac{i}{2\hbar}\Pi_{k}\vee \left[i\hbar\partial_{x_{k}},u_{j}\right]=-\frac{1}{2}\Pi_{k}\vee \partial_{x_{k}}u_{j}
\end{align*}
so that the second term in the right-hand side of \eqref{second right hand side L1j} reads
\begin{align*}
&-\sum_{k=1}^{3}(u_{j}+\Pi_{j})\vee (\frac{1}{2}\Pi_{k}\vee \partial_{x_{k}}u_{j}+u_{k}\partial_{x_{k}}u_{j})\\
&=-\sum_{k=1}^{3}(u_{j}+\Pi_{j})\vee (\frac{1}{2}\Pi_{k}\vee \partial_{x_{k}}u_{j}+\frac{1}{2}u_{k}\vee \partial_{x_{k}}u_{j})\\
&=-\frac{1}{2}\sum_{k=1}^{3}(u_{j}+\Pi_{j})\vee ((u_{k}+\Pi_{k})\vee \partial_{x_{k}}u_{j}).     
\end{align*}
So we find that 
\begin{align*}
L_{1j}=&(u_{j}+\Pi_{j})\vee \Big(\partial_{t}u_{j}+\sum_{k=1}^{3}u_{k}\partial_{x_{k}}u_{j}\Big)-\frac{1}{2}\sum_{k=1}^{3}(u_{j}+\Pi_{j})\vee \left((u_{k}+\Pi_{k})\vee \partial_{x_{k}}u_{j}\right)\\
+&\frac{1}{2}\sum_{k=1}^{3}(\Pi_{k}\vee \mathbf{J}_{jk})\vee u_{j}. 
\end{align*}
Therefore we conclude that
\begin{align}
\sum_{j=1}^{3}\mathrm{tr}\big(L_{1j}R_{\hbar}(t)\big)
&=\sum_{j=1}^{3}\mathrm{tr}\big((u_{j}+\Pi_{j})\vee(\partial_{t}u_{j}+(uD_{x}u )_{j})R_{\hbar}(t)\big) \notag \\
&-\frac{1}{2}\sum_{1\leq j,k\leq 3}\mathrm{tr}\big((u_{j}+\Pi_{j})\vee((u_{k}+\Pi_{k})\vee\partial_{x_{k}}u^{j})R_{\hbar}(t)\big) \notag\\
&+\frac{1}{2}\sum_{1\leq j,k\leq 3}\mathrm{tr}\left((\Pi_{k}\vee \mathbf{J}_{jk})\vee u_{j} R_{\hbar}(t)\right).
\label{eq for J1 1body}
\end{align}
As for $L_{2j}$, note the following identities
\begin{align*}
[V\ast \rho_{\hbar},u_{j}\vee u_{j}]=0,\qquad [V\ast \rho_{\hbar},A_{j}u_{j}]=0, \qquad  
-[V\ast \rho_{\hbar},\partial_{x_{j}}\vee u^{j}]=2\partial_{x_{j}}V\ast \rho_{\hbar}u^{j}.
\end{align*}
Consequently, we have 
\begin{align*}
L_{2j}=2\int_{\mathbb{R}^{3}}\rho_{\hbar}(t,x)\partial_{x_{j}}V\ast \rho_{\hbar}(t,x)\,u_j(t,x)\,\dd x.  \end{align*}
So it follows that 
\begin{align}
\sum_{j=1}^{3}\mathrm{tr}\big(L_{2,j}R_{\hbar}(t)\big)=2\int_{\mathbb{R}^{3}}\rho_{\hbar}(t,x)\nabla_{x} V\ast \rho_{\hbar}(t,x)\cdot u(t,x)\,\dd x.
\label{eq for J2 1body}
\end{align}
Combining \eqref{eq for J1 1body} with \eqref{eq for J2 1body}, we obtain \eqref{eq:-2}. 

\smallskip
\textbf{3}. Using Remark \ref{remark about eq qunatum density} we compute that 
\begin{align}
I^{3}(t)+I^{4}(t)&= \frac{\dd}{\dd t}\int_{\mathbb{R}^{3}}V\ast \rho(t,x)\rho(t,x)\ \dd x-2\frac{\dd}{\dd t}\int_{\mathbb{R}^{3}}V\ast \rho_{\hbar}(t,x)\rho(t,x)\ \dd x \notag\\
&=2\int_{\mathbb{R}^{3}}V\ast\rho(t,x)\partial_{t}\rho(t,x)\ \dd x-2\int_{\mathbb{R}^{3}}V\ast \rho_{\hbar}(t,x)\partial_{t}\rho(t,x)\ \dd x \notag\\
&-2\int_{\mathbb{R}^{3}}V\ast \partial_{t}\rho_{\hbar}(t,x)\rho(t,x)\ \dd x=2\int_{\mathbb{R}^{3}}\nabla_{x} V\ast \rho(t,x)\cdot\rho u(t,x)\ \dd x \notag\\
&-2\int_{\mathbb{R}^{3}}\nabla_{x} V\ast \rho_{\hbar}(t,x)\cdot \rho u(t,x) \ \dd x+2\int_{\mathbb{R}^{3}}V\ast \mathrm{div}_{x}(J_{\hbar})(t,x)\rho(t,x)\ \dd x. \label{I3I4 1body} 
\end{align}
In view of \eqref{eq:-2} and \eqref{I3I4 1body} we conclude that 
\begin{align}
I^{1}+I^{2}+I^{3}+I^{4}=&-\sum_{j=1}^{3}\mathrm{tr}((u_{j}+\Pi_{j})\vee (\partial_{x_{j}}V\ast \rho+(u\mathbf{J})_{j})R_{\hbar}(t))
\notag\\
&-\frac{1}{2}\sum_{1\leq j,k\leq 3}\mathrm{tr}\left((u_{j}+\Pi_{j})\vee ((\Pi_{k}+u_{k})\vee \partial_{x_{k}}u_{j})R_{\hbar}(t)\right)\notag\\
&+\frac{1}{2}\sum_{1\leq j,k\leq 3}\mathrm{tr}\left((\Pi_{k}\vee \mathbf{J}_{jk})\vee u_{j} R_{\hbar}(t)\right) \notag
\\
&+2\int_{\mathbb{R}^{3}}\rho_{\hbar}(t,x)\nabla_{x}V\ast \rho_{\hbar}(t,x)\cdot u(t,x)\ \dd x\notag\\
&+2\int_{\mathbb{R}^{3}}\nabla_{x}V\ast \rho(t,x)\cdot \rho u(t,x)\ \dd x\notag\\
&-2\int_{\mathbb{R}^{3}}\nabla_{x}V\ast \rho_{\hbar}(t,x)\cdot \rho u(t,x)\ \dd x+2\int_{\mathbb{R}^{3}}V\ast \mathrm{div}_{x}(J_{\hbar})(t,x)\rho(t,x)\ \dd x.
\label{I1I4 sum}
\end{align}
Since $\mathbf{J}$ is anti-symmetric we have $u\cdot (u\mathbf{J})=0$. Therefore, by Lemma \ref{cancellation lemma} we have   
\begin{align*}
&-\sum_{j=1}^{3}(u_{j}+\Pi_{j})\vee (u\mathbf{J})_{j}+\frac{1}{2}\sum_{1\leq j,k\leq 3}(\Pi_{k}\vee\mathbf{J}_{jk})\vee u_{j}\\
&=-\frac{1}{2}\sum_{1\leq j,k\leq 3}\Pi_{j}\vee (\mathbf{J}_{kj}\vee u_{k})+\frac{1}{2}\sum_{1\leq j,k\leq 3}(\Pi_{k}\vee \mathbf{J}_{jk})\vee u_{j}=0. 
\end{align*}
Hence, we get the following cancellation 
\begin{align}
-\sum_{1\leq j,k\leq 3}\mathrm{tr}\left((u_{j}+\Pi_{j})\vee (u\mathbf{J})_{j}R_{\hbar}(t)\right)+\frac{1}{2}\sum_{1\leq j,k\leq 3}\mathrm{tr}\left((\Pi_{k}\vee \mathbf{J}_{jk})\vee u_{j}R_{\hbar}(t)\right)=0. \label{Cancelation single}    
\end{align}
Plugging in \eqref{Cancelation single} inside \eqref{I1I4 sum} we deduce that 
\begin{align}
I^{1}+I^{2}+I^{3}+I^{4}=&-\sum_{j=1}^{3}\mathrm{tr}\left((u_{j}+\Pi_{j})\vee \partial_{x_{j}}V\ast \rho R_{\hbar}(t)\right) \notag\\
&-\frac{1}{2}\sum_{1\leq j,k\leq 3}\mathrm{tr}\left((u_{j}+\Pi_{j})\vee ((u_{k}+\Pi_{k})\vee \partial_{x_{k}}u_{j})R_{\hbar}(t)\right) \notag\\
&+2\int_{\mathbb{R}^{3}}\rho_{\hbar}(t,x)\nabla_{x}V\ast \rho_{\hbar}(t,x)\cdot u(t,x)\ \dd x \notag\\
&+2\int_{\mathbb{R}^{3}}\nabla_{x}V\ast \rho(t,x)\cdot \rho u(t,x)\ \dd x \notag\\
&-2\int_{\mathbb{R}^{3}}\nabla_{x}V\ast \rho_{\hbar}(t,x)\cdot \rho u(t,x)\ \dd x+2\int_{\mathbb{R}^{3}}V\ast \mathrm{div}_{x}(J_{\hbar})(t,x)\rho(t,x)\ \dd x. \label{sum of I1I4}
\end{align}
Integrating by parts we see that
\begin{align*}
2\int_{\mathbb{R}^{3}}V\ast\mathrm{div}_{x}(J_{\hbar})(t,x)\rho(t,x)\ \dd x&=-2\int_{\mathbb{R}^{3}}\nabla_{x} V\ast \rho(t,x)\cdot J_{\hbar}(t,x)\ \dd x\\
&=\sum_{j=1}^{3}\mathrm{tr}\left(\Pi_{j}\vee \partial_{x_{j}}V\ast \rho R_{\hbar}(t)\right).     
\end{align*}
Therefore we have 
\begin{align}
&-\sum_{j=1}^{3}\mathrm{tr}\left(\Pi_{j}\vee \partial_{x_{j}}V\ast \rho R_{\hbar}(t)\right)+2\int_{\mathbb{R}^{3}}V\ast \mathrm{div}_{x}(J_{\hbar})(t,x)\rho(t,x)\ \dd x=0. 
\label{vanishing}   \end{align}
In addition, observe  that 
\begin{align*}
-\sum_{j=1}^{3}\mathrm{tr}\left(u_{j}\vee \partial_{x_{j}}V\ast \rho R_{\hbar}(t)\right)=-2\int_{\mathbb{R}^{3}}\nabla_{x} V\ast \rho(t,x)\cdot u(t,x)\rho_{\hbar}(t,x)\ \dd x.    
\end{align*}
Consequently, we obtain  
\begin{align}
&\sum_{j=1}^{3}\mathrm{tr}\left(u_{j}\vee (-\partial_{x_{j}}V\ast \rho) R_{\hbar}(t)\right)+2\int_{\mathbb{R}^{3}}\rho_{\hbar}(t,x)\,\nabla_{x} V\ast \rho_{\hbar}(t,x)\cdot u(t,x) \,\dd x \notag\\
&+2\int_{\mathbb{R}^{3}}\nabla_{x} V\ast \rho(t,x)\cdot \rho u(t,x)\ \dd x
-2\int_{\mathbb{R}^{3}}\nabla_{x} V\ast \rho_{\hbar}(t,x)\cdot \rho u(t,x) \ \dd x \notag\\
&=-2\int_{\mathbb{R}^{3}}\nabla_{x} V\ast \rho(t,x) \cdot u(t,x)\rho_{\hbar}(t,x) \ \dd x+2\int_{\mathbb{R}^{3}}\rho_{\hbar}(t,x)\,\nabla_{x} V\ast \rho_{\hbar}(t,x)\cdot u(t,x) \,\dd x \notag\\
&+2\int_{\mathbb{R}^{3}}\nabla
_{x}V\ast\rho \cdot \rho u(t,x)\ \dd x
-2\int_{\mathbb{R}^{3}}\nabla_{x} V\ast \rho_{\hbar}(t,x)\cdot \rho u(t,x) \ \dd x \notag\\
&=2\int_{\mathbb{R}^{3}}u(t,x)\cdot \nabla_{x} V\ast(\rho_{\hbar}-\rho)(t,x)(\rho_{\hbar}-\rho)(t,x)\ \dd x. \label{interaction part of time derivative}      
\end{align}
Substituting \eqref{vanishing}-\eqref{interaction part of time derivative} inside \eqref{sum of I1I4} we obtain 
\begin{align*}
\frac{\dd}{\dd t}\mathcal{\mathcal{E}}_{\hbar}(t)=&-\frac{1}{2}\sum_{1\leq j,k\leq 3}\mathrm{tr}\big((\Pi_{j}+u_{j})\vee((\Pi_{k}+u_{k})\vee(\partial_{x_{k}}u_{j}))R_{\hbar}(t)\big)\\
&+2\int_{\mathbb{R}^{3}}u(t,x)\cdot \nabla_{x} V\ast(\rho_{\hbar}-\rho)(t,x)(\rho_{\hbar}-\rho)(t,x)\ \dd x.     
\end{align*}
\end{proof}
We will now use Theorem \ref{thm time derivative of modulated energy} to conclude the proof of Theorem \ref{main thm semi classical}. For compactness of notation, we denote by $\mathcal{K}_{\hbar}(t)$ and $\mathcal{V}_{\hbar}(t)$ the kinetic and interaction parts of $\mathcal{E}_{\hbar}(t)$, i.e. 
\begin{align}
\mathcal{K}_{\hbar}(t)\coloneqq \mathrm{tr}\left((i\hbar\nabla_{x}+A+u  )^{2}R_{\hbar}(t)\right), \qquad \mathcal{V}_{\hbar}(t)\coloneqq\int_{\mathbb{R}^{3}}V\ast(\rho_{\hbar}-\rho) (\rho_{\hbar}-\rho)(t,x)\ \dd x.\label{kinetic and int parts def}     
\end{align}
\textbf{Proof of Theorem \ref{main thm semi classical}}. Our ultimate aim is to prove that $\underset{t\in [0,T]}{\sup}\mathcal{E}_{\hbar}(t)\underset{\hbar\rightarrow 0}{\rightarrow}0$.  
Set 
\begin{align*}
\mathcal{D}_{1}(t)\coloneqq -\frac{1}{2}\sum_{1\leq j,k\leq 3}\mathrm{tr}\big((\Pi_{j}+u_{j})\vee((\Pi_{k}+u_{k})\vee(\partial_{x_{k}}u_{j}))R_{\hbar}(t)\big)     
\end{align*} 
and 
\begin{align*}
\mathcal{D}_{2}(t)\coloneqq 2\int_{\mathbb{R}^{3}}u(t,x)\cdot \nabla_{x} V\ast(\rho_{\hbar}-\rho)(t,x)(\rho_{\hbar}-\rho)(t,x)\ \dd x.    
\end{align*}
According to Theorem \ref{thm time derivative of modulated energy} we have $\frac{\dd}{\dd t}\mathcal{E}_{\hbar}(t)=\mathcal{D}_{1}(t)+\mathcal{D}_{2}(t)$. We proceed by estimating separately $\mathcal{D}_{1}$ and $\mathcal{D}_{2}$.\\
\textbf{Estimate on $\mathcal{D}_{1}(t)$.} Set $P_{j}\coloneqq \Pi_{j}+u_{j}$. We have 
\begin{align}
P_{j}\vee (P_{k}\vee \partial_{x_{k}}u_{j})&=P_{j}\vee (P_{k}\partial_{x_{k}}u_{j}+\partial_{x_{k}}u_{j}P_{k}) \notag\\
&=P_{j}P_{k}\partial_{x_{k}}u_{j}+P_{j}\partial_{x_{k}}u_{j}P_{k}+P_{k}\partial_{x_{k}}u_{j}P_{j}+\partial_{x_{k}}u_{j}P_{k}P_{j}. \label{formula for anticommutator of Pj}  \end{align}
Therefore we have 
\begin{align*}
\mathcal{D}_{1}(t)=&-\frac{1}{2}\sum_{1\leq j,k\leq 3}\langle \psi\vert P_{j}P_{k}\partial_{x_{k}}u_{j}\vert \psi\rangle -\frac{1}{2}\sum_{1\leq j,k\leq 3}\langle \psi \vert  P_{j}(\partial_{x_{k}}u_{j}+\partial_{x_{j}}u_{k})P_{k} \vert \psi\rangle\\
&-\frac{1}{2}\sum_{1\leq j,k\leq 3}\langle \psi \vert  \partial_{x_{k}}u_{j}P_{k}P_{j}\vert  \psi\rangle \coloneqq D_{1}+D_{2}+D_{3}.        
\end{align*}
To estimate $D_{1}$, we apply the self-adjointness of $P_{j}$ to find
\begin{align*}
\sum_{k=1}^{3}\langle \psi\vert   P_{j}P_{k}\partial_{x_{k}}u_{j}\vert\psi\rangle  &=\sum_{k=1}^{3}\langle P_{j}\psi\vert   P_{k}\partial_{x_{k}}u_{j}\vert \psi \rangle\\
&=\sum_{k=1}^{3} (\langle P_{j}\psi  \vert  i\hbar \partial_{x_{k}x_{k}}u_{j}\vert \psi\rangle +\langle P_{j}\psi\vert  \partial_{x_{k}}u_{j}P_{k}\vert \psi\rangle)\\
&=\langle P_{j}\psi\vert  i\hbar \Delta_{x} u_{j}\vert \psi\rangle+\sum_{k=1}^{3}\langle P_{j}\psi  \vert \partial_{x_{k}}u_{j}P_{k}\vert \psi \rangle.       
\end{align*}
Observe that
\begin{align*}
\langle P_{j}\psi\vert  \partial_{x_{k}}u_{j}P_{k}\vert \psi\rangle \leq \frac{1}{2}\left\Vert D_{x}u\right\Vert_{\infty}\left(\left\Vert P_{j}\psi(t,\cdot)\right\Vert_{2}^{2} +\left\Vert P_{k}\psi(t,\cdot)\right\Vert_{2}^{2} \right).     
\end{align*}
Furthermore we have 
\begin{align*}
\langle P_{j}\psi \vert i\hbar \Delta_{x}u_{j}\vert \psi\rangle \leq \left\Vert P_{j}\psi(t,\cdot) \right\Vert_{2}^{2}+\hbar^{2}\left\Vert  \Delta_{x}u_{j}\psi(t,\cdot)\right\Vert_{2}^{2}\leq \mathcal{K}_{\hbar}(t)+\hbar^{2}\left\Vert \Delta_{x} u\right\Vert_{\infty}^{2}.     
\end{align*}
Therefore, we obtain 
\begin{align}
D_{1}\leq 3\left\Vert D_{x}u\right\Vert_{\infty}\mathcal{K}_{\hbar}(t)+9\mathcal{K}_{\hbar}(t)+9\hbar^{2}\left\Vert \Delta_{x}u\right\Vert_{\infty}^{2}= (3\left\Vert D_{x}u\right\Vert_{\infty} +9)\mathcal{K}_{\hbar}(t)+9\hbar^{2}\left\Vert \Delta_{x}u \right\Vert_{\infty}^{2}.  \label{Est D1}    
\end{align}
To estimate $D_{2}$, denote by $D^{\mathbf{s}}_{x}u$ the symmetrized Jacobian matrix of $u$, i.e. $D^{\mathbf{s}}_{x}u=\frac{1}{2}(D_{x}u+D_{x}^{T}u)$. Then, we have 
\begin{align*}
\langle \psi\vert   P_{j}(\partial_{x_{k}}u_{j}+\partial_{x_{j}}u_{k})P_{k}\vert \psi\rangle&= \langle P_{j}\psi\vert  (\partial_{x_{k}}u_{j}+\partial_{x_{j}}u_{k})P_{k}\vert\psi\rangle\\
&\leq \left\Vert D^{\mathbf{s}}u\right\Vert_{\infty} \left(\frac{1}{2}\left\Vert P_{j}\psi(t,\cdot)\right\Vert_{2}^{2}+\frac{1}{2}\left\Vert P_{k}\psi(t,\cdot)\right\Vert_{2}^{2}\right).      
\end{align*}
Therefore it follows that 
\begin{align}
D_{2}\leq 3\left\Vert D^{\mathbf{s}}u\right\Vert_{\infty}\mathcal{K}_{\hbar}(t). \label{Est D2}     
\end{align}
To estimate $D_{3}$ we note that 
\begin{align*}
&\sum_{k=1}^{3}\langle \psi \vert  \partial_{x_{k}}u_{j}P_{k}P_{j}\vert \psi\rangle=\sum_{k=1}^{3}\langle P_{k}\partial_{x_{k}}u_{j}\psi\vert  P_{j}\vert \psi \rangle\\
&=\sum_{k=1}^{3}\langle \partial_{x_{k}}u_{j}P_{k}\psi\vert  P_{j}\vert \psi \rangle  +\sum_{k=1}^{3}\langle i\hbar \partial_{x_{k}x_{k}}u_{j}\psi\vert  P_{j}\vert \psi\rangle \\
&= \sum_{k=1}^{3}\langle \partial_{x_{k}}u_{j}P_{k}\psi\vert P_{j}\vert \psi\rangle+\langle i\hbar \Delta_{x}u_{j}\psi \vert P_{j}\vert \psi\rangle\\
&\leq \frac{\left\Vert D_{x}u\right\Vert_{\infty}}{2}\sum_{k=1}^{3} (\left\Vert P_{k}\psi\right\Vert_{2}^{2} +\left\Vert P_{j}\psi(t,\cdot)\right\Vert_{2}^{2})+\frac{\hbar^{2}}{2}\left\Vert \Delta_{x}u_{j}\psi(t,\cdot)\right\Vert_{2}^{2}+\frac{1}{2}\left\Vert P_{j}\psi\right\Vert_{2}^{2}.    
\end{align*}
Therefore it follows that 
\begin{align}
D_{3}\leq 3(\left\Vert D_{x}u\right\Vert_{\infty}+\frac{1}{2})\mathcal{K}_{\hbar}(t)+\frac{9\hbar^{2}}{2}\left\Vert \Delta_{x}u\right\Vert_{\infty}^{2}. \label{EST D3}      
\end{align}
Gathering \eqref{Est D1}-\eqref{EST D3} we conclude that 
\begin{align}
\mathcal{D}_{1}(t)\leq c_{1}\mathcal{K}_{\hbar}(t)+c_{2}\hbar^{2},    \label{final est D1} 
\end{align}
where $c_{1}=c_{1}(\left\Vert D_{x}u\right\Vert_{\infty})$ and $c_{2}=c_{2}(\left\Vert \Delta_{x}u\right\Vert_{\infty})$. \par\medskip
\textbf{Estimate on $\mathcal{D}_{2}$. } First, we recast $\mathcal{D}_{2}(t)$ as 
\begin{align*}
&-2\int_{\mathbb{R}^{3}}u(t,x)\cdot \nabla_{x}V\ast (\rho_{\hbar}-\rho)(t,x)\mathrm{div}_{x}(\nabla_{x}V\ast (\rho_{\hbar}-\rho))(t,x)\ \dd x\\
&=2\int_{\mathbb{R}^{3}}\nabla_{x}(u(t,x)\cdot \nabla_{x}V\ast (\rho_{\hbar}-\rho))(t,x)\cdot \nabla_{x}V\ast (\rho_{\hbar}-\rho)(t,x)\ \dd x\\
&=2\int_{\mathbb{R}^{3}}D_{x}u(t,x)\nabla_{x}V\ast (\rho_{\hbar}-\rho)(t,x)\cdot \nabla_{x}V\ast (\rho_{\hbar}-\rho)(t,x)\ \dd x\\
&+2\int_{\mathbb{R}^{3}}u(t,x)D_{x}\nabla_{x}V\ast(\rho_{\hbar}-\rho)(t,x)\cdot \nabla_{x}V\ast (\rho_{\hbar}-\rho)(t,x)\ \dd x\coloneqq\mathcal{D}_{2}^{1}(t)+\mathcal{D}_{2}^{2}(t).   
\end{align*}
To estimate $\mathcal{D}_{2}^{1}(t)$, we use the identity 
$$\int_{\mathbb{R}^{3}}\left\vert \nabla_{x} V\ast \mu\right\vert^{2}(x)\ \dd x=\int_{\mathbb{R}^{3}}\mu(x) V\ast \mu(x)\ \dd x $$
in order to find that 
\begin{align}
\mathcal{D}_{2}^{1}(t)&\leq 2\left\Vert D_{x}u\right\Vert_{\infty}\int_{\mathbb{R}^{3}}\left\vert \nabla_{x}V\ast (\rho_{\hbar}-\rho)\right\vert^{2}(t,x) \ \dd x\notag\\
&=2\left\Vert D_{x}u\right\Vert_{\infty}\int_{\mathbb{R}^{3}}V\ast (\rho_{\hbar}-\rho)(t,x)(\rho_{\hbar}-\rho)(t,x)\ \dd x=2\left\Vert D_{x}u\right\Vert_{\infty} \mathcal{V}_{\hbar}(t). \label{Est on D21}       
\end{align}
In addition, we have 
\begin{align}
\mathcal{D}_{2}^{2}(t)&=\int_{\mathbb{R}^{3}}u(t,x)\cdot \nabla_{x}\left\vert \nabla_{x}V\ast (\rho_{\hbar}-\rho)\right\vert^{2}(t,x)\ \dd x \notag\\
&=-\int_{\mathbb{R}^{3}}\mathrm{div}_{x}u(t,x)\left\vert \nabla_{x}V\ast (\rho_{\hbar}-\rho)\right\vert^{2}(t,x)\ \dd x \notag\\
&\leq \left\Vert \mathrm{div}_{x}u\right\Vert_{\infty}\int_{\mathbb{R}^{3}}\left\vert \nabla_{x}V\ast(\rho_{\hbar}-\rho)\right\vert^{2}(t,x)\ \dd x \notag\\
&=\left\Vert \mathrm{div}_{x}u\right\Vert_{\infty}\int_{\mathbb{R}^{3}}V\ast (\rho_{\hbar}-\rho)(t,x)(\rho_{\hbar}-\rho)(t,x)\ \dd x. \label{D22 est}
\end{align}
In view of \eqref{Est on D21}-\eqref{D22 est} we conclude that 
\begin{align}
\mathcal{D}_{2}(t)\leq (2\left\Vert D_{x}u\right\Vert_{\infty}+\left\Vert \mathrm{div}_{x}u\right\Vert_{\infty})\mathcal{V}_{\hbar}(t). \label{final est on d2} 
\end{align}
By \eqref{final est D1} and \eqref{final est on d2} we conclude that there are constants $C_{1}=C_{1}(\left\Vert D_{x}u\right\Vert_{\infty} )>0$ and $C_{2}=C_{2}(\left\Vert \Delta_{x}u\right\Vert_{\infty})$ such that 
\begin{align*}
\frac{\dd}{\dd t}\mathcal{E}_{\hbar}(t)\leq C_{1}\mathcal{E}_{\hbar}(t)+C_{2}\hbar^{2}.    
\end{align*}
Thanks to Gr\"onwall's lemma we conclude that 
\begin{align*}
\underset{t\in [0,T]}{\sup}\mathcal{E}_{\hbar}(t)\leq e^{C_{1}T}\left(\mathcal{E}_{\hbar}(0)+C_{2}T\hbar^{2}\right) ,   
\end{align*}
which proves \eqref{vanishing of quantum modulated energy cocnlusion}. 
\qed
\begin{rem}
Once the convergence $\underset{t\in [0,T]}{\sup}\mathcal{E}_{\hbar}(t)\underset{\hbar \rightarrow 0}{\rightarrow} 0$ has been established, proving the convergence \eqref{convergence conclusion 1d} is standard, and is not affected by the inclusion of a magnetic field. Exactly the same argument presented in \cite{golse2022mean} or \cite{ben2026quantum} can be applied in the present settings in order to deduce \eqref{convergence conclusion 1d}. The same remark applies for the $N$-body problem, i.e. the essence of the proof is obtaining a Gr\"onwall estimate for $\mathcal{E}_{\hbar,N}(t)$. \label{remark how to deduce convergence}
\end{rem}
\begin{rem}
To conclude the semiclassical limit we needed $u$ to be Lipschitz with bounded Laplacian, which is weaker than the regularity provided by our well-posedness Theorem \ref{well posedness of EPA} and is also slightly weaker than the original regularity requirement in \cite{golse2022mean}, which demanded an extra bounded derivative. It is an interesting question whether it is possible to relax the regularity on $u$. Note that already if we assume that $u\in L^{\infty}_{t}W^{2,p}_{x}$ ($p<\infty$) then it is not obvious how to make sure that terms of the form $\hbar^{2}\left\Vert \Delta_{x}u\psi_{\hbar}\right\Vert_{2}$ are negligible in $\hbar$, since this would necessitate uniform in $\hbar$ bounds on the $L^{p}$ norm of $\left\vert \psi_{\hbar}\right\vert ^{2}$. In the case where $A=0$, these bounds are a consequence of the propagation of quantum moments proved in \cite{lafleche2019propagation}. However, when $A$ is an arbitrary Lipschitz vector-field it seems non-trivial to extend this propagation of quantum moments. Another reason that propagation of quantum moments in the magnetized settings is interesting is because we expect it to be a key ingredient in deriving the kinetic equation \eqref{magnetized vlasov Poisson} as a semiclassical limit from \eqref{SP equation intro}.      
\end{rem}
\section{The $N$-body problem}\label{Mean-field semiclassical limit sec}
This section deals with the joint mean-field semiclassical limit for magnetized Coulombic many body quantum dynamics. For the reader's convenience we recall the following functional inequalities. We mention that these inequalities are part of a series of deep developments in mean-field limits for singular flows, a full review of which is beyond the scope of the current work. We refer to \cite{serfaty2024lectures} for an exhaustive overview on the subject. For brevity, we introduce the notation 
\begin{align*}
\mathbf{V}(X^{N},\mu)\coloneqq  \int_{\Delta^{c}}V(x-y)(\mu_{X^{N}}-\mu)^{\otimes2}(\dd x\dd y).
\end{align*}
We also introduce the abbreviation for the kinetic and interaction parts of $\mathcal{E}_{\hbar,N}(t)$, denoted by $\mathcal{K}_{\hbar,N}(t)$ and $\mathcal{V}_{\hbar,N}(t)$ respectively, and defined by 
\begin{align*}
\mathcal{K}_{\hbar,N}(t)\coloneqq\frac{1}{N}\sum^N_{\ell=1}\mathrm{tr}\big((i\hbar\nabla_{x
^{\ell}}+A(x^{\ell})+u(t,x^{\ell})
)^{2}R_{\hbar,N}(t)\big)    
\end{align*}
and 
\begin{align*}
\mathcal{V}_{\hbar,N}(t)\coloneqq \int_{\mathbb{R}^{3N}}\left(\int_{\Delta^{c}}V(x-y)(\mu_{X^{N}}-\rho)^{\otimes 2}(\dd x\dd y)\right)\rho_{\hbar,N}(t,\dd X^{N}).     
\end{align*}
Note that we can recast $\mathcal{V}_{\hbar,N}(t)$ as
\begin{align}
\mathcal{V}_{\hbar,N}(t)=\int_{\mathbb{R}^{3N}}\mathbf{V}(X^{N},\rho(t,\cdot))\rho_{\hbar,N}(t,\dd X^{N}). \label{VhN formula}     
\end{align}
\begin{lem}[\cite{Serfaty2020MeanField}] \label{duerinckslemma}
 Let $\mu\in L^{\infty}(\mathbb{R}^{3})\cap \mathcal{P}(\mathbb{R}^{3})$ and  ${X}^{N}\in \Delta_{N}^{c}$. 
 Then

 \smallskip
 \begin{enumerate}
     \item [\rm (i)]  $\mathbf{V}(X^{N},\mu) +\frac{C(\left\Vert \mu\right\Vert_{\infty} )}{N^{\frac{2}{3}}}\geq 0${\rm ;}

     \smallskip
     \item [\rm (ii)] There are some $\lambda,C>0$ such that, for all $\varphi\in W^{1,\infty}(\mathbb{R}^{3})\cap \dot H^{-1}(\mathbb{R}^{3}),$ 
\[
\bigg\vert \int_{\mathbb{R}^{3}}\left(\mu_{X^{N}}-\mu\right)\varphi(x)\,\dd x \bigg\vert 
\leq C\left\Vert \varphi\right\Vert _{W^{1,\infty}}N^{-\lambda}+\left\Vert \nabla_{x}\varphi\right\Vert _{2}\Big(\mathbf{V}(X^{N},\mu)+\frac{C(\left\Vert \mu\right\Vert_{\infty} )}{N^{\frac{2}{3}}}\Big)^{\frac{1}{2}}.
\]
 \end{enumerate}
 \label{reminder of positivity}
\end{lem}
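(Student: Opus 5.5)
The plan is to follow Serfaty's electric (smeared-charge) formulation of the modulated energy. Fix a truncation scale $\eta>0$, to be chosen as $\eta=N^{-\frac13}$. Let $\delta^{(\eta)}_{x}$ be the uniform probability measure on the sphere $\partial B(x,\eta)$. Introduce the smeared empirical measure $\mu^{\eta}_{X^N}\coloneqq\frac1N\sum_{i}\delta^{(\eta)}_{x_i}$ and the truncated potential $h_\eta\coloneqq V\ast(\mu^\eta_{X^N}-\mu)$. Since $\mu^\eta_{X^N}-\mu$ is a finite signed measure with zero total mass and no point masses, we have $-\Delta h_\eta=\mu^\eta_{X^N}-\mu$. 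Using the identity $\int|\nabla V\ast\nu|^2=\int\nu\,V\ast\nu$, already used in the proof of Theorem \ref{main thm semi classical}, we get
\begin{align*}
\int_{\mathbb{R}^3}|\nabla h_\eta|^2\,\dd x=\iint V(x-y)\,(\mu^\eta_{X^N}-\mu)^{\otimes2}(\dd x\dd y)\ge 0 .
\end{align*}

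The key step, which I expect to be the main obstacle, is comparing this smeared energy with $\mathbf{V}(X^N,\mu)$, where the diagonal is removed. Expanding the square produces three kinds of terms.
\begin{itemize}
\item \emph{Pair terms.} For $i\neq j$ the smeared pair term is $\frac1{N^2}V\ast\delta^{(\eta)}_0\ast\delta^{(\eta)}_0(x_i-x_j)$. By Newton's theorem and superharmonicity of $V$, $V\ast\delta^{(\eta)}_0\le V$ pointwise, with equality outside $B(0,\eta)$. Applying this twice shows the smeared pair term is at most $\frac1{N^2}V(x_i-x_j)$.
\item \emph{Self-interactions.} These are the $N$ terms $\frac1{N^2}\iint V\,\dd\delta^{(\eta)}_{x_i}\dd\delta^{(\eta)}_{x_i}=\frac{1}{N^2}\frac{1}{4\pi\eta}$, which together contribute $\frac{C}{N\eta}$.
\item \emph{Cross terms with $\mu$.} These differ from the unsmeared ones by
\begin{align*}
\frac2N\sum_i\int (V-V\ast\delta^{(\eta)}_0)(x_i-y)\,\mu(\dd y).
\end{align*}
The integrand is nonnegative and supported in $B(0,\eta)$, with $\int_{B(0,\eta)}V\lesssim\eta^2$. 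The whole correction is therefore bounded by $C\|\mu\|_\infty\eta^2$.
\end{itemize}
Collecting these three estimates should give
\begin{align*}
\int|\nabla h_\eta|^2\le \mathbf{V}(X^N,\mu)+\frac{C}{N\eta}+C\|\mu\|_\infty\eta^2 .
\end{align*}
The delicate point is to get the signs right: the pair terms may only decrease, and the cross terms enter with the sign that costs at most $\eta^2$. With $\eta=N^{-1/3}$ the right-hand error is $C(\|\mu\|_\infty)N^{-2/3}$. Together with the nonnegativity of the left side, this yields (i).

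For (ii), split
\begin{align*}
\int(\mu_{X^N}-\mu)\varphi=\int(\mu^\eta_{X^N}-\mu)\varphi+\frac1N\sum_i\Big(\varphi(x_i)-\int\varphi\,\dd\delta^{(\eta)}_{x_i}\Big).
\end{align*}
The second sum is bounded by $\|\nabla\varphi\|_\infty\eta=\|\varphi\|_{W^{1,\infty}}N^{-1/3}$. For the first, integrate by parts using $-\Delta h_\eta=\mu^\eta_{X^N}-\mu$; this is legitimate since $\varphi\in\dot H^{-1}$-type data with $\nabla\varphi\in L^2$ and $\nabla h_\eta\in L^2$. This gives $\int\nabla h_\eta\cdot\nabla\varphi\le\|\nabla\varphi\|_2\|\nabla h_\eta\|_2$. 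Inserting the comparison bound from the previous paragraph then gives (ii) with $\lambda=\frac13$, after possibly enlarging the constant $C(\|\mu\|_\infty)$. Finally, because $\mathbf{V}(X^N,\mu)$ is continuous on $\Delta^c_N$ and all estimates are pointwise in $X^N$, no further approximation argument is needed.
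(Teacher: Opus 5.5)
Your proof is correct. The paper gives no argument of its own for this lemma; it simply imports it from Serfaty. What you wrote is the standard smeared-charge (truncation) argument behind that result, specialized to a uniform radius $\eta=N^{-1/3}$, and that suffices for both (i) and (ii) with $\lambda=\frac13$.

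The one step stated too quickly is the integration by parts in (ii). Knowing $\nabla h_\eta,\nabla\varphi\in L^{2}$ is not enough to make the boundary term vanish when $\varphi$ is merely bounded: take $\varphi\equiv 1$ and a measure of nonzero mass. What makes it work is the zero total mass of $\mu^{\eta}_{X^N}-\mu$. It lets you write $\nabla h_\eta(x)=\int(\nabla V(x-y)-\nabla V(x))\,(\mu^{\eta}_{X^N}-\mu)(\dd y)$, and dominated convergence then gives $R^{-1}\int_{R<|x|<2R}|\nabla h_\eta|\,\dd x\to 0$. This yields the identity $\int\varphi\,\dd(\mu^{\eta}_{X^N}-\mu)=\int\nabla h_\eta\cdot\nabla\varphi$.
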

\begin{prop} [\cite{Serfaty2020MeanField}] \label{commutator estimate} 
Let the assumptions of {\rm Lemma \ref{duerinckslemma}} hold, and assume further that $u:\mathbb{R}^{3}\rightarrow \mathbb{R}^{3}$ is Lipschitz. Then,
it holds that
 \begin{align*}
\bigg\vert \int_{\Delta^{c}
}\left(u(x)-u(y)\right) \cdot\nabla_{x} V(x-y)\left(\mu_{X^{N}}-\mu\right)^{\otimes2}(\dd x\dd y)\bigg\vert 
\leq C\Big(\mathbf{V}(X^{N},\mu)+\frac{1+\left\Vert \mu \right\Vert_{\infty}}{N^{\beta}}\Big),
\end{align*}
where $C=C\left(\left\Vert u \right\Vert_{W^{1,\infty}},\left\Vert \mu\right\Vert_{\infty} \right)$ and $\beta>0$. 
\end{prop}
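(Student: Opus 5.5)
The plan follows Serfaty's electric reformulation of the interaction energy. Write $\nu\coloneqq\mu_{X^{N}}-\mu$ and $h\coloneqq V\ast\nu$, so that $-\Delta_{x}h=\nu$. The starting point is the identity, valid for a \emph{smooth} signed measure $\nu$ with $h=V\ast\nu$:
\begin{align*}
\int\!\!\int (u(x)-u(y))\cdot\nabla_{x}V(x-y)\,\nu(\dd x)\nu(\dd y)=2\int_{\mathbb{R}^{3}} u\cdot\nabla_{x}h\,\Delta_{x}h\,\dd x=-\int_{\mathbb{R}^{3}} D_{x}u:\big(2\nabla_{x}h\otimes\nabla_{x}h-\left\vert\nabla_{x}h\right\vert^{2}I\big)\,\dd x .
\end{align*}
The first equality uses the oddness of $\nabla V$ and symmetrisation. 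The second is an integration by parts that trades the divergence of the stress-energy tensor $T[h]=2\nabla h\otimes\nabla h-\vert\nabla h\vert^{2}I$ for $2\Delta h\nabla h$. It gives the bound $\le 3\Vert D_{x}u\Vert_{\infty}\int\vert\nabla h\vert^{2}$, and $\int\vert\nabla h\vert^{2}$ is the energy $\mathbf{V}$ up to the removed diagonal. The task is to make this rigorous for the empirical measure, where both the energy and the left-hand side are defined only off the diagonal.

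\textbf{Step 1 (smearing).} For each point $x_{i}$ choose a truncation radius
$$r_{i}\coloneqq\tfrac14\min\Big(\min_{j\neq i}\vert x_{i}-x_{j}\vert,\;N^{-1/3}\Big).$$
Replace $\delta_{x_{i}}$ by the uniform probability measure $\delta^{(r_{i})}_{x_{i}}$ on $\partial B(x_{i},r_{i})$, and set $h_{r}\coloneqq V\ast\big(\tfrac1N\sum_{i}\delta^{(r_{i})}_{x_{i}}-\mu\big)$. By Newton's theorem, $h_{r}$ coincides with $h$ outside $\cup_{i}B(x_{i},r_{i})$, and the balls are disjoint. The usual monotonicity computation then gives
$$\int\vert\nabla h_{r}\vert^{2}-\frac{1}{N^{2}}\sum_{i}V(r_{i})\le\mathbf{V}(X^{N},\mu)+\frac{C\Vert\mu\Vert_{\infty}}{N^{2/3}}+\text{(errors)}.$$
The errors come from interactions of $\mu$ inside balls of radius at most $N^{-1/3}$ and are $O(\Vert\mu\Vert_{\infty}N^{-2/3})$. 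Since the self-energies $N^{-2}V(r_{i})$ are nonnegative, this bound, combined with Lemma~\ref{duerinckslemma}(i), controls both $\int\vert\nabla h_{r}\vert^{2}$ and the quantity $N^{-2}\sum_{i}V(r_{i})$ by $\mathbf{V}+C N^{-2/3}$.

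\textbf{Step 2 (commutator for the smeared measure).} Apply the smooth identity to $\nu_{r}\coloneqq\frac1N\sum_{i}\delta^{(r_{i})}_{x_{i}}-\mu$; this is legitimate since $\nabla h_{r}\in L^{2}$ and $u$ is Lipschitz. It bounds the double integral against $\nu_{r}^{\otimes2}$ by $C\Vert D_{x}u\Vert_{\infty}\int\vert\nabla h_{r}\vert^{2}$. The left-hand side for $\nu_{r}$ includes self-interaction terms $i=j$. These are harmless: for a single smeared charge, $(u(x)-u(y))\cdot\nabla V(x-y)$ is bounded by $\Vert D_{x}u\Vert_{\infty}\vert x-y\vert^{-1}$. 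Hence each diagonal term is $O(N^{-2}\Vert D_{x}u\Vert_{\infty}V(r_{i}))$, which Step~1 already controls.

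\textbf{Step 3 (undoing the smearing): the main obstacle.} It remains to compare the off-diagonal double integral for $\nu$ with the one for $\nu_{r}$ minus its diagonal. Here I would use two facts. First, the kernel $k(x,y)\coloneqq(u(x)-u(y))\cdot\nabla V(x-y)$ is homogeneous of degree $-1$ to leading order. Second, an average over a sphere of a function that is harmonic off the sphere reproduces the centre value. The pair terms $i\neq j$ change by at most $\Vert u\Vert_{W^{1,\infty}}\,r_{i}\,\vert x_{i}-x_{j}\vert^{-2}$. Summing these over close pairs is the delicate part. I expect to handle it by bounding the number of close pairs through $\mathbf{V}+CN^{-2/3}$, using the control on $\frac1{N^{2}}\sum_{i}V(r_{i})$ from Step~1, and by treating distant pairs directly. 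The cross terms with $\mu$ change by $O(\Vert\mu\Vert_{\infty}\Vert u\Vert_{W^{1,\infty}}\,r_{i})$ per point, hence $O(N^{-1/3})$ in total. Collecting the errors yields the claim with some small exponent $\beta$, for instance $\beta=\tfrac13$ or less if the close-pair counting loses powers. The constant depends on $\Vert u\Vert_{W^{1,\infty}}$ and $\Vert\mu\Vert_{\infty}$ only.
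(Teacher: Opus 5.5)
The paper gives no proof of this proposition; it is imported from \cite{Serfaty2020MeanField}, whose argument uses the scheme you outline (electric potential, smearing at the radii $r_i$, stress--energy tensor). Your sketch does not close, however, and the first gap is in Step~1.

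With disjoint balls the computation is an identity, $\int|\nabla_x h_r|^2=\mathbf{V}(X^N,\mu)+\frac{1}{N^2}\sum_i V(r_i)+O(\|\mu\|_\infty N^{-2/3})$. The self-energies enter with the unfavourable sign. From $a-b\le c$ with $a,b\ge0$ nothing follows about $a$ or $b$, and Lemma~\ref{duerinckslemma}(i) does not help. So neither $\frac1{N^2}\sum_iV(r_i)$ nor $\int|\nabla_xh_r|^2$ is controlled, yet Steps~2 and~3 rest on both.

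The missing ingredient is a second truncation at the \emph{uniform} radius $\eta=N^{-1/3}$, where the balls may overlap. Set $\nu_\eta\coloneqq\frac1N\sum_i\delta^{(\eta)}_{x_i}-\mu$. Since $V\ast\delta^{(\eta)}_0=\min(V,V(\eta))$ is superharmonic, $\iint V\,\dd\delta^{(\eta)}_{x_i}\dd\delta^{(\eta)}_{x_j}\le\min(V(x_i-x_j),V(\eta))$. Positivity of $\iint V\,\dd\nu_\eta^{\otimes2}=\int|\nabla_x h_\eta|^2$ then gives $\frac1{N^2}\sum_{i\neq j}(V(x_i-x_j)-V(\eta))_+\le\mathbf{V}(X^N,\mu)+C(1+\|\mu\|_\infty)N^{-2/3}$. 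This positive pair term dominates $\frac1{N^2}\sum_iV(r_i)$ up to $CN^{-2/3}$. Indeed, if the nearest neighbour $x_{j(i)}$ of $x_i$ is closer than $\eta/2$, then $V(r_i)=4V(x_i-x_{j(i)})\le 8(V(x_i-x_{j(i)})-V(\eta))_+$; otherwise $r_i\ge\eta/8$.

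The second gap is Step~3, which you leave open and which does not work along the lines you indicate. Bounding each pair separately by $\|D_xu\|_\infty r_i|x_i-x_j|^{-2}$ discards the cancellation between $\mu_{X^N}$ and $\mu$. For distant pairs, the direct bound $N^{-1/3}|x|^{-2}\le|x|^{-1}$ returns the full Coulomb energy of $\mu_{X^N}$, i.e.\ $\mathbf{V}(X^N,\mu)+O(1)$, not $O(N^{-\beta})$. Close pairs require $\frac1{N^2}\sum_{|x_i-x_j|\le N^{-1/3}}V(x_i-x_j)$. The quantity $\frac1{N^2}\sum_iV(r_i)$ cannot control this, because it only sees nearest neighbours: a cluster of $m$ points at mutual distance of order $d$ gives $m/d$ versus $m^2/d$.

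The remedy is not to split into pairs. By the mean value property of $\nabla V$ on spheres not enclosing the singularity, the pair errors and the $\mu$-cross errors combine exactly into
\begin{align*}
-\frac{2}{N}\sum_{i=1}^{N}\int_{\mathbb{R}^3}\big(u(x)-u(x_i)\big)\cdot\nabla_x h^{(i)}(x)\,\delta^{(\eta_i)}_{x_i}(\dd x)+O\big(\|D_xu\|_\infty\|\mu\|_\infty N^{-2/3}\big),\qquad h^{(i)}\coloneqq V\ast(\mu_{X^N}-\mu)-\tfrac{1}{N}V(\cdot-x_i),
\end{align*}
and $\nabla_xh^{(i)}=\nabla_xh_r$ on $B(x_i,r_i)$. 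The left-hand side does not depend on $\vec\eta$, and your Step~2 bounds, redone with $\vec\eta$ in place of $\vec r$, are uniform for $\eta_i\in[r_i/2,r_i]$. You may therefore average over this range. The sphere integrals become volume integrals over the disjoint annuli $B(x_i,r_i)\setminus B(x_i,r_i/2)$, and Cauchy--Schwarz bounds the term by $C\|D_xu\|_\infty\big(\frac1{N^2}\sum_iV(r_i)\big)^{1/2}\|\nabla_xh_r\|_{2}$. Once the first gap is repaired, this is at most $C\big(\mathbf{V}(X^N,\mu)+C(1+\|\mu\|_\infty)N^{-2/3}\big)$, and the argument closes.
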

Note that point (i) in Lemma \ref{duerinckslemma} ensures that $\mathcal{E}_{\hbar,N}(t)\geq 0$. Recall that   $\mathscr{H}_{\hbar,A}^{N}$ is the $N$-body magnetized quantum Hamiltonian defined by 
\begin{align*}
\mathscr{H}_{\hbar,A}^{N}=\frac{1}{2}\sum_{\ell=1}^{N}(i\hbar\nabla_{x^{\ell}}+A(x^{\ell})+u(x^{\ell}))^{2}+\frac{1}{2N}\sum_{\ell\neq m}V(x^{\ell}-x^{m}).     
\end{align*}
Finally, the quantum $N$-body total energy is given by   
\begin{align*}
\mathcal{F}_{\hbar,N}(t)=\frac{1}{N}\sum_{\ell=1}^{N}\mathrm{tr}\left((i\hbar\nabla_{x^{\ell}}+A(x^{\ell}))^{2}R_{\hbar,N}(t)\right)+\frac{1}{N}\sum_{\ell\neq m} V(x_{\ell}-x_{m}).    
\end{align*}
By direct calculation, conservation of energy holds for the $N$-body system as well, i.e. 
\begin{align}
\frac{\dd}{\dd t}\mathcal{F}_{\hbar,N}(t)=0. \label{Conservation of N-body}    
\end{align}
The following theorem is the $N$-body analogue of Theorem \ref{thm time derivative of modulated energy}.   \begin{thm}
Let the assumption of Theorem \ref{N body mainthm} hold. Let $R_{\hbar,N}(t)$ be the unique solution to   \eqref{von Neumman magnetic intro} with initial data $R^{\mathrm{in}}_{\hbar,N}$ and   $(\rho,u)$ be the unique solution of \eqref{Magnetized Euler Poisson velocity density intro} with initial data $(\rho^{\mathrm{in}},u^{\mathrm{in}})$. Let $\mathcal{E}_{\hbar,N}(t)$ be given by \eqref{Def of renormalized modulated energy}. Then, for all $t\in [0,T]$ it holds that 
\begin{align*}
\frac{\dd}{\dd t}\mathcal{E}_{\hbar,N}(t)&=-\frac{1}{2}\sum_{1\leq j,k\leq 3}\mathrm{tr}\left((\Pi_{j}+u_{j})\vee ((\Pi_{k}+u_{k})\vee (\partial_{x_{k}}u_{j}))R_{\hbar,N:1}(t)\right) 
\\
&+\int_{\mathbb{R}^{3N}}\int_{\Delta^{c}}(u(t,x)-u(t,y))\cdot \nabla_{x} V(x-y)(\mu_{X^{N}}-\rho(t,\cdot))^{\otimes 2}(\dd x\dd y)\rho_{\hbar,N}(t,\dd X^{N}).    
\end{align*}
\label{time derivative of N body mod energy}
\end{thm}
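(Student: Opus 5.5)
The plan is to mirror the proof of Theorem \ref{thm time derivative of modulated energy}, now at the level of $N$ particles, and to reduce everything to one-body marginals except the interaction part. First I would expand the modulated energy as
\begin{align*}
\mathcal{E}_{\hbar,N}(t)=\mathcal{F}_{\hbar,N}(t)+\frac{1}{N}\sum_{\ell=1}^{N}\sum_{j=1}^{3}\mathrm{tr}\big((\Pi_{j}^{\ell}\vee u_{j}(x^{\ell})+u_{j}^{2}(x^{\ell}))R_{\hbar,N}(t)\big)-2\int_{\mathbb{R}^{3}}V\ast\rho\,\rho_{\hbar,N:1}\,\dd x+\int_{\mathbb{R}^{3}}V\ast\rho\,\rho\,\dd x+\frac{C}{N^{2/3}}.
\end{align*}
Here I use $\mathbf{V}(X^{N},\rho)=\frac{1}{N^{2}}\sum_{\ell\neq m}V(x^{\ell}-x^{m})-\frac{2}{N}\sum_{\ell}V\ast\rho(x^{\ell})+\int V\ast\rho\,\rho$ and integrate against $\rho_{\hbar,N}$; the normalization of $\mathcal{F}_{\hbar,N}$ has to be read consistently with this. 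By symmetry of $R_{\hbar,N}$, the one-body sums equal traces against $R_{\hbar,N:1}$. Conservation of energy \eqref{Conservation of N-body} then kills $\mathcal{F}_{\hbar,N}$.

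Next I would differentiate the kinetic cross terms using \eqref{von Neumman magnetic intro}. This gives $\frac{i}{\hbar}\mathrm{tr}([\mathscr{H}^{N}_{\hbar,A},\cdot]\,R_{\hbar,N})$ plus the $\partial_{t}$ terms. The commutator with $\mathscr{K}^{N}_{\hbar}$ acts only on particle $\ell$, so the computation of $L_{1j}$ in Step 2 carries over verbatim. It produces the three groups of terms:
\begin{itemize}
\item the material-derivative term $(u_j+\Pi_j)\vee(\partial_t u_j+(uD_xu)_j)$;
\item the double anticommutator $-\frac12\sum(u_j+\Pi_j)\vee((u_k+\Pi_k)\vee\partial_{x_k}u_j)$;
\item the magnetic term $\frac12\sum(\Pi_k\vee\mathbf{J}_{jk})\vee u_j$,
\end{itemize}
all tested against $R_{\hbar,N:1}$. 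Substituting the Euler equation $\partial_tu+uD_xu=-u\mathbf{J}-\nabla V\ast\rho$, the magnetic terms cancel exactly as in \eqref{Cancelation single} by Lemma \ref{cancellation lemma}, using $u\cdot u\mathbf{J}=0$. What remains is the desired first term plus $-\mathrm{tr}((u_j+\Pi_j)\vee\partial_{x_j}V\ast\rho\,R_{\hbar,N:1})$.

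The genuinely $N$-body input is the commutator with $\mathscr{V}^{N}$. Using $-[V(x^{\ell}-x^{m}),\partial_{x^{\ell}_j}\vee u_j(x^{\ell})]=2\partial_jV(x^{\ell}-x^{m})u_j(x^{\ell})$ and symmetrizing in $\ell\leftrightarrow m$ with $V$ even, I obtain
\[
\frac{1}{N^{2}}\sum_{\ell\neq m}\int(u(x^{\ell})-u(x^{m}))\cdot\nabla V(x^{\ell}-x^{m})\,\rho_{\hbar,N}(\dd X^{N}).
\]
For the terms $-2\frac{\dd}{\dd t}\int V\ast\rho\,\rho_{\hbar,N:1}+\frac{\dd}{\dd t}\int V\ast\rho\,\rho$, I would use the continuity equation $\partial_t\rho_{\hbar,N:1}+\mathrm{div}J_{\hbar,N:1}=0$ from Remark \ref{remark about eq qunatum density} and $\partial_t\rho+\mathrm{div}(\rho u)=0$. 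As in \eqref{vanishing}, the $\Pi_j\vee\partial_jV\ast\rho$ trace cancels against the term produced by $J_{\hbar,N:1}$.

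Finally I would collect the remaining interaction contributions:
\begin{itemize}
\item the pair sum above;
\item $-\frac{2}{N}\sum_\ell\int u(x^\ell)\cdot\nabla V\ast\rho(x^\ell)\,\rho_{\hbar,N}$;
\item $-2\int\nabla V\ast\rho_{\hbar,N:1}\cdot\rho u$;
\item $2\int\nabla V\ast\rho\cdot\rho u$.
\end{itemize}
I would verify that they equal $\int\!\int_{\Delta^c}(u(x)-u(y))\cdot\nabla V(x-y)(\mu_{X^N}-\rho)^{\otimes2}\,\rho_{\hbar,N}(\dd X^N)$. Expanding $(\mu_{X^N}-\rho)^{\otimes 2}$ yields the diagonal-free empirical pairs, two cross terms (equal by oddness of $\nabla V$ combined with swapping $x,y$), and the $\rho\otimes\rho$ term.

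I expect this bookkeeping of signs and factors of $2$ and $1/N$ in the interaction part to be the main obstacle. A second difficulty is justifying the differentiation under the trace for the singular Coulomb potential: the commutator manipulations need $R_{\hbar,N}$ to stay in the domain of $\mathscr{K}^N_\hbar$. I would handle that via Theorem \ref{well posedness of magnetic von Neumann} together with the finite-energy bound. If needed, I would first argue with a regularized $V$ and pass to the limit, as in \cite{golse2022mean}.
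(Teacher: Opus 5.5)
Your proposal is correct and follows essentially the same route as the paper. You expand $\mathcal{E}_{\hbar,N}$ around the conserved total energy, redo the one-body $L_{1j}$ computation particle by particle (removing the magnetic terms via Lemma \ref{cancellation lemma} and $u\cdot u\mathbf{J}=0$), cancel the $\Pi_j\vee\partial_{x_j}V\ast\rho$ trace against the current term from the continuity equation, and regroup the four interaction terms into $(\mu_{X^N}-\rho)^{\otimes 2}$. The differences are cosmetic: you symmetrize the pair sum in $\ell\leftrightarrow m$ before regrouping rather than after, and you correctly note that $\mathcal{F}_{\hbar,N}$ must be read as $\frac{2}{N}\mathrm{tr}(\mathscr{H}^{N}_{\hbar,A}R_{\hbar,N})$ (i.e.\ with $\frac{1}{N^{2}}\sum_{\ell\neq m}V$) for the expansion to be exact.
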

\begin{proof}
\textbf{1.} We have 
\begin{align*}
\mathcal{E}_{\hbar,N}(t)&=\mathcal{F}_{\hbar,N}(t)+\frac{1}{N}\sum_{\ell=1}^{N}\mathrm{tr}\left((i\hbar\nabla_{x^{\ell}}+A(x^{\ell}))\vee u(t,x^{\ell})R_{\hbar,N}(t)\right)\\
&+\frac{1}{N}\sum_{\ell=1}^{N}\mathrm{tr}\left( \left\vert u\right\vert^{2} (t,x^{\ell})R_{\hbar,N}(t)\right)
-\frac{2}{N}\sum_{\ell=1}^{N}\int_{\mathbb{R}^{3N}}V\ast\rho(t,x^{\ell})\rho_{\hbar,N}(t,\dd X^{N})\\
&+\int_{\mathbb{R}^{3}}\rho(t,x) V\ast \rho(t,x)\ \dd x.
\end{align*}
Due to conservation of energy (equation \eqref{Conservation of N-body}) it follows that 
\begin{align*}
\frac{\dd}{\dd t}\mathcal{E}_{\hbar,N}(t)=&\frac{\dd}{\dd t}\frac{1}{N}\sum_{\ell=1}^{N}\mathrm{tr}\left((i\hbar\nabla_{x^{\ell}}+A(x^{\ell}))\vee u(t,x^{\ell})R_{\hbar,N}(t)\right)\\
&+\frac{\dd}{\dd t}\frac{1}{N}\sum_{\ell=1}^{N}\mathrm{tr}\left( \left\vert u\right\vert ^{2}(t,x^{\ell})R_{\hbar,N}(t)\right)\\
&-\frac{\dd}{\dd t}\frac{2}{N}\sum_{\ell=1}^{N}\int_{\mathbb{R}^{3N}}V\ast \rho(t,x^{\ell})\rho_{\hbar,N}(t,\dd X^{N})+\frac{\dd}{\dd t}\int_{\mathbb{R}^{3}}\rho V\ast \rho(t,x)\ \dd x\coloneqq
\sum_{k=1}^{4}I^{k}. 
\end{align*}
\textbf{2.} We start with calculating $I^{1}(t)+I^{2}(t)$. Setting $\Pi_{j}^{\ell}\coloneqq i\hbar\partial_{x^{\ell}_{j}}+A_{j}(x^{\ell})$ we may write
\begin{align*}
I^{1}(t)=\frac{\dd}{\dd t}\frac{1}{N}\sum_{\ell=1}^{N}\sum_{j=1}^{3}\mathrm{tr}\left(\Pi_{j}^{\ell}\vee u_{j}(t,x^{\ell})R_{\hbar,N}(t)\right)
\end{align*}
and 
\begin{align*}
I^{2}(t)=\frac{\dd}{\dd t}\frac{1}{N}\sum_{\ell=1}^{N}\int_{\mathbb{R}^{3}}\left\vert u\right\vert^{2} (t,x^{\ell})\rho_{\hbar,N}(t,\dd X^{N}).     
\end{align*}
For brevity, we omit time dependency in the forthcoming calculations. We claim to have the identity 
\begin{align}
I^{1}+I^{2} &=\frac{1}{N}\sum_{\ell=1}^{N}\sum_{j=1}^{3}\mathrm{tr}\big((u_{j}(x^{\ell})+\Pi_{j}^{\ell})\vee(\partial_{t}u_{j}(x^{\ell})+(uD_{x}u )_{j}(x^{\ell}))R_{\hbar,N}\big) \notag\\
&-\frac{1}{2N}\sum_{\ell=1}^{N}\sum_{1\leq j,k\leq 3}\mathrm{tr}\big((\Pi_{j}^{\ell}+u_{j}(x^{\ell}))\vee((\Pi_{k}^{\ell}+u_{k}(x^
{\ell}))\vee\partial_{x_{k}}u^{j}(x^{\ell}))R_{\hbar,N}\big) \notag\\
&+\frac{1}{2N}\sum_{\ell=1}^{N}\sum_{1\leq j,k\leq 3}\mathrm{tr}\left((\Pi_{k}^{\ell}\vee \mathbf{J}_{jk}(x^{\ell}))\vee u_{j}(x^{\ell})R_{\hbar,N}\right) \notag\\
&+\frac{2}{N^{2}}\sum_{m\neq \ell}\int_{\mathbb{R}^{3N}}\nabla_{x^{\ell}}V(x^{\ell}-x^{m})\cdot u(t,x^{\ell})\rho_{\hbar,N}(t,\dd X^{N}). \label{I1I2 Nbody formula}
\end{align}
We compute:   
\begin{align}
&\frac{\dd}{\dd t}\sum_{j=1}^{3}\mathrm{tr}\left(\Pi_{j}^{\ell}\vee u_{j}(x^{\ell})R_{\hbar,N}\right)+\frac{\dd}{\dd t}\mathrm{tr}\left(\left\vert u\right\vert^{2}(x^{\ell})R_{\hbar,N}\right) \notag\\
=&\mathrm{tr}\Big(\Big(\sum_{j=1}^{3}\Pi_{j}^{\ell}\vee u_{j}(x^{\ell})+\left\vert u\right\vert^{2}(x^{\ell}) \Big)\partial_{t}R_{\hbar,N}\Big) \notag
+\mathrm{tr}\Big(\partial_{t}\Big(\sum_{j=1}^{3}\Pi_{j}^{\ell}\vee u_{j}(x^{\ell})+\left\vert u\right\vert^{2}(x^{\ell})  \Big)R_{\hbar,N}\Big) \notag\\
=&\frac{1}{i\hbar}\mathrm{tr}\Big(\Big(\sum_{j=1}^{3}\Pi_{j}^{\ell}\vee u_{j}(x^{\ell})+\left\vert u\right\vert^{2}(x^{\ell}) \Big)\left[\mathscr{H}_{\hbar,A}^{N},R_{\hbar,N}\right]\Big) \notag
\\
&+\mathrm{tr}\Big(\partial_{t}\Big(\sum_{j=1}^{3}\Pi_{j}^{\ell}\vee u_{j}(x^{\ell})+\left\vert u\right\vert^{2}(x^{\ell})  \Big)R_{\hbar,N}\Big) \notag\\
=&\frac{i}{\hbar}\mathrm{tr}\Big(\Big[\mathscr{H}_{\hbar,A}^{N},\sum_{j=1}^{3}\Pi_{j}^{\ell}\vee u_{j}(x^{\ell})+\left\vert u\right\vert^{2}(x^{\ell}) \Big]R_{\hbar,N}\Big) 
+\mathrm{tr}\Big(\partial_{t}\Big(\sum_{j=1}^{3}\Pi_{j}^{\ell}\vee u_{j}(x^{\ell})+\left\vert u\right\vert^{2}(x^{\ell})  \Big)R_{\hbar,N}\Big). 
\label{rhsnbody} 
\end{align}
Recalling the definition  \eqref{def of Nkinetic part} and \eqref{def of Ninter part}, the right-hand side of \eqref{rhsnbody} reads  
\begin{align*}
&\sum_{j=1}^{3}\mathrm{tr}\left(\left(\partial_{t}+\frac{i}{\hbar}\left[\mathscr{K}_{\hbar}^{N},\cdot\right]\right)\left(\Pi_{j}^{\ell}\vee u_{j}(x^{\ell})+\frac{1}{2}u_{j}(x^{\ell})\vee u_{j}(x^{\ell})\right)R_{\hbar,N}\right
)\\
&+ \sum_{j=1}^{3}\mathrm{tr}\left(\frac{i}{\hbar}\left[\mathscr{V}^{N},\cdot\right]\left(\Pi_{j}^{\ell}\vee u_{j}(x^{\ell})+\frac{1}{2}u_{j}(x^{\ell})\vee u_{j}(x^{\ell}) \right)R_{\hbar,N}\right)\\
&\coloneqq \sum_{j=1}^{3}\mathrm{tr}\left(L_{1j}R_{\hbar,N}\right)+\sum_{j=1}^{3}\mathrm{tr}\left(L_{2j}R_{\hbar,N}\right). 
\end{align*}
We start with the term $L_{1j}$. Using the  Leibniz rule: $\left[A,B\vee C\right]=\left[A,B\right]\vee C+\left[A,C\right]\vee B,$ we can write $L_{1j}$ as follows:
\begin{align}
L_{1j}&=\left(\partial_{t}+\frac{i}{\hbar}\left[\mathscr{K}_{\hbar}^{N},\cdot\right]\right)\left((\frac{1}{2}u_{j}(x^{\ell})+\Pi_{j}^{\ell})\vee u_{j}(x^{\ell})\right) \notag \\
&=\left(\left(\partial_{t}+\frac{i}{\hbar}\left[\mathscr{K}_{\hbar}^{N},\cdot\right]\right)(\frac{1}{2}u_{j}(x^{\ell})+\Pi_{j}^{\ell})\right)\vee u_{j}(x^{\ell}) \notag\\
&+(\frac{1}{2}u_{j}(x^{\ell})+\Pi_{j}^{\ell})\vee \left(\left(\partial_{t}+\frac{i}{\hbar}\left[\mathscr{K}_{\hbar}^{N},\cdot\right]\right)u_{j}(x^{\ell})\right) \notag\\
&=\left(\left(\partial_{t}+\frac{i}{\hbar}\left[\mathscr{K}_{\hbar}^{N},\cdot\right]\right)u_{j}(x^{\ell})\right)\vee u_{j}(x^{\ell})+\Pi_{j}^{\ell}\vee \left(\left(\partial_{t}+\frac{i}{\hbar}\left[\mathscr{K}_{\hbar}^{N},\cdot\right]\right)u_{j}(x^{\ell})\right) \notag\\
&+\frac{i}{\hbar}\left[\mathscr{K}_{\hbar}^{N},\Pi_{j}^{\ell}\right]\vee u_{j}(x^{\ell}). \label{L1j nbody} 
\end{align}
Note $\Pi_{j}^{\ell},\Pi_{k}^{m}$ commute whenever  $m\neq \ell$ and therefore  \begin{align*}
\Big[\mathscr{K}_{\hbar}^{N},\Pi_{j}^{\ell}\Big]=\frac{1}{2}\sum_{k=1}^{3}\Big[(\Pi_{k}^{\ell})^{2},\Pi_{j}^{\ell}\Big].     
\end{align*}
Furthermore, we compute that 
\begin{align*}
\left[(\Pi_{k}^{\ell})^{2},\Pi_{j}^{\ell}\right]&=\Pi_{k}^{\ell}\vee\left[\Pi_{k}^{\ell},\Pi_{j}^{\ell}\right]=\Pi_{k}^{\ell}\vee \left[i\hbar\partial_{x_{k}^{\ell}}+A_{k}(x^{\ell}),i\hbar \partial_{x_{j}^{\ell}}+A_{j}(x^{\ell})\right]\\
&=\Pi_{k}^{\ell}\vee (i\hbar \partial_{x_{k}^{\ell}}A_{j}(x^{\ell})-i\hbar\partial_{x_{j}^{\ell}}A_{k}(x^{\ell})), 
\end{align*}
and hence we get   
\begin{align}
\frac{i}{\hbar}\left[\mathscr{K}_{\hbar}^{N},\Pi_{j}^{\ell}\right]=\frac{1}{2}\sum_{k=1}^{3}\Pi_{k}^{\ell}\vee(\partial_{x_{j}^{\ell}}A_{k}(x^{\ell})-\partial_{x_{k}^{\ell}}A_{j}(x^{\ell}))=\frac{1}{2}\sum_{k=1}^{3}\Pi_{k}^{\ell}\vee \mathbf{J}_{jk}(x^{\ell}).    \label{last term}     
\end{align}
In addition, note that $\left[\mathscr{K}^{N}_{\hbar},u_{j}(x^{\ell})\right]=\left[\frac{1}{2}\sum_{k=1}^{3}(\Pi_{k}^{\ell})^{2},u_{j}(x^{\ell})\right]$.
Thus, inserting \eqref{last term} inside \eqref{L1j nbody} we get 
\begin{align*}
L_{1j}&=(u_{j}(x^{\ell})+\Pi_{j}^{\ell})\vee \left(\left(\partial_{t}+\frac{i}{\hbar}\left[\frac{1}{2}\sum_{k=1}^{3}(\Pi_{k}^{\ell})^{2},\cdot\right]\right)u_{j}(x^{\ell})\right)\\
&+\frac{1}{2}\sum_{k=1}^{3}(\Pi_{k}^{\ell}\vee \mathbf{J}_{jk}(x^{\ell}))\vee u_{j}(x^{\ell})\\
&=(u_{j}(x^{\ell})+\Pi_{j}^{\ell})\vee \left(\partial_{t}u_{j}(x^{\ell})+\sum_{k=1}^{3}u_{k}(x^{\ell})\partial_{x_{k}}u_{j}(x^{\ell})\right)\\
&+(u_{j}(x^{\ell})
+\Pi_{j}^{\ell})\vee \left(\frac{i}{\hbar}\left[\frac{1}{2}\sum_{k=1}^{3}(\Pi_{k}^{\ell})^{2},u_{j}(x^{\ell})\right]-\sum_{k=1}^{3}u_{k}(x_{\ell})\partial_{x_{k}}u_{j}(x^{\ell})\right)\\
&+\frac{1}{2}\sum_{k=1}^{3}(\Pi_{k}^{\ell}\vee \mathbf{J}_{jk}(x^{\ell})) \vee u_{j}(x^{\ell}). 
\end{align*}
Therefore we conclude that
\begin{align}
\sum_{j=1}^{3}\mathrm{tr}\big(L_{1j}R_{\hbar,N}(t)\big)
&=\sum_{j=1}^{3}\mathrm{tr}\big((u_{j}(x^{\ell})+\Pi_{j}^{\ell})\vee(\partial_{t}u_{j}(x^{\ell})+(uD_{x}u )_{j}(x^{\ell}))R_{\hbar,N}\big) \notag \\
&-\frac{1}{2}\sum_{1\leq j,k\leq 3}\mathrm{tr}\big((\Pi_{j}^{\ell}+u_{j}(x^{\ell}))\vee((\Pi_{k}^{\ell}+u_{k}(x^
{\ell}))\vee\partial_{x_{k}}u^{j}(x^{\ell}))R_{\hbar,N}\big) \notag\\
&+\frac{1}{2}\mathrm{tr}\Big(\sum_{1\leq j,k\leq 3}(\Pi_{k}^{\ell}\vee \mathbf{J}_{jk}(x^{\ell}))\vee u_{j}(x^{\ell}) R_{\hbar,N}\Big).
\label{eq for J1}
\end{align}
As for $L_{2j}$, we see that
\begin{align*}
[\mathscr{V}^{N},u_{j}(x^{\ell})\vee u_{j}(x^{\ell})]=0,\ [\mathscr{V}^{N},A_{j}(x^{\ell})]=0
\end{align*}
and that 
\begin{align*}
-[\mathscr{V}^{N},\partial_{x_{j}^{\ell}}\vee u_{j}(x^{\ell})]=-2[\mathscr{V}^{N}, u_{j}(x^{\ell})\partial_{x_{j}^{\ell}}]=2\partial_{x_{j}^{\ell}}\mathscr{V}^{N}u_{j}(x^{\ell}).  \end{align*}
We compute that 
\begin{align*}
&2\partial_{x_{j}^{\ell}}\mathscr{V}^{N}=\frac{1}{N}\sum_{m'\neq  m}\partial_{x_{j}^{\ell}}(V(x^{m'}-x^{m}))\\
&=\frac{1}{N}\sum_{m'\neq m}\delta_{\ell m'}\partial_{x_{j}^{\ell}}V(x^{m'}-x^{m})-\delta_{\ell m}\partial_{x_{j}^{\ell}}V(x^{m'}-x^{m})\\
&=\frac{1}{N}\sum_{m:m\neq \ell }\partial_{x_{j}^{\ell}}V(x^{\ell}-x^{m})-\frac{1}{N}\sum_{m':m'\neq \ell}\partial_{x_{j}^{\ell}}V(x^{m'}-x^{\ell})=\frac{2}{N}\sum_{m:m\neq \ell}\partial_{x_{j}^{\ell}}V(x^{\ell}-x^{m}). 
\end{align*}
Consequently, it follows that 
\begin{align}
\sum_{j=1}^{3}\mathrm{tr}\big(L_{2,j}R_{\hbar,N}\big)&=\frac{2}{N}\sum_{m:m\neq \ell}\sum_{j=1}^{3}\int_{\mathbb{R}
^{3N}}\partial_{x_{j}^{\ell}}V(x^{\ell}-x^{m})u_{j}(x^{\ell})\rho_{\hbar,N}(t,\dd X^{N}) \notag\\
&=\frac{2}{N}\sum_{m:m\neq \ell}\int_{\mathbb{R}^{3N}}\nabla_{x^{\ell}}V(x^{\ell}-x^{m})\cdot u(t,x^{\ell})\rho_{\hbar,N}(t,\dd X^{N}) 
\notag
\end{align}
so that 
\begin{align}
\frac{2}{N}\sum_{\ell=1}^{N}\sum_{j=1}^{3}\mathrm{tr}\left(L_{2,j}R_{\hbar,N}\right)=\frac{2}{N^{2}}\sum_{m\neq \ell}\int_{\mathbb{R}^{3N}}\nabla_{x^{\ell}}V(x^{\ell}-x^{m})\cdot u(t,x^{\ell})\rho_{\hbar,N}(t,\dd X^{N}).     \label{eq for J2}
\end{align}
Combining \eqref{eq for J1} with \eqref{eq for J2}, we obtain \eqref{I1I2 Nbody formula}.\\
\textbf{3.} In view of Remark \ref{remark about eq qunatum density} we compute that 
\begin{align}
I^{3}(t)+I^{4}(t)=&\frac{\dd}{\dd t}\int_{\mathbb{R}^{3}}V\ast \rho(t,x)\rho(t,x)\ \dd x-2\frac{\dd}{\dd t}\int_{\mathbb{R}^{3}}V\ast \rho(t,x)\rho_{\hbar,N:1}(t,x)\ \dd x \notag\\
=&2\int_{\mathbb{R}^{3}}V\ast \rho(t,x)\partial_{t}\rho(t,x)\ \dd x-2\int_{\mathbb{R}^{3}}V\ast \rho_{\hbar,N:1}(t,x)\partial_{t}\rho(t,x)\ \dd x \notag\\
&-2\int_{\mathbb{R}^{3}}V\ast \partial_{t}\rho_{\hbar,N:1}(t,x)\rho(t,x)\ \dd x \notag\\
=& 2\int_{\mathbb{R}^{3}}\nabla_{x}V\ast \rho(t,x)\cdot \rho u(t,x)\ \dd x-2\int_{\mathbb{R}^{3}} \nabla_{x}V\ast \rho_{\hbar,N:1}(t,x)\cdot \rho u(t,x)\ \dd x \notag\\
&+2\int_{\mathbb{R}^{3}}V\ast \mathrm{div}_{x}(J_{\hbar,N:1})(t,x)\rho(t,x)\ \dd x. \label{I3I4 N body}
\end{align}
In view of \eqref{I1I2 Nbody formula} and \eqref{I3I4 N body} we conclude that 
\begin{align}
I^{1}+I^{2}+I^{3}+I^{4} =&-\frac{1}{N}\sum_{\ell=1}^{N}\sum_{j=1}^{3}\mathrm{tr}\big((u_{j}(x^{\ell})+\Pi_{j}^{\ell})\vee(\partial_{x_{j}^{\ell}}V\ast \rho(x^{\ell})+(u\mathbf{J})_{j}(x^{\ell}))R_{\hbar,N}\big) \notag\\
&-\frac{1}{2N}\sum_{\ell=1}^{N}\sum_{1\leq j,k\leq 3}\mathrm{tr}\big((\Pi_{j}^{\ell}+u_{j}(x^{\ell}))\vee((\Pi_{k}^{\ell}+u_{k}(x^
{\ell}))\vee\partial_{x_{k}}u^{j}(x^{\ell}))R_{\hbar,N}\big) \notag\\
&+\frac{1}{2N}\sum_{\ell=1}^{N}\sum_{1\leq j,k\leq 3}\mathrm{tr}\left((\Pi_{k}^{\ell}\vee \mathbf{J}_{jk}(x^{\ell}))\vee u_{j}(x^{\ell})R_{\hbar,N}\right) \notag\\
&+\frac{2}{N^{2}}\sum_{m\neq \ell}\int_{\mathbb{R}^{3N}}\nabla_{x^{\ell}}V(x^{\ell}-x^{m})\cdot u(t,x^{\ell})\rho_{\hbar,N}(t,\dd X^{N}) \notag\\
&+2\int_{\mathbb{R}^{3}}\nabla_{x}V\ast \rho(t,x)\cdot \rho u(t,x)\ \dd x-2\int_{\mathbb{R}^{3}} \nabla_{x}V\ast \rho_{\hbar,N:1}(t,x)\cdot \rho u(t,x)\ \dd x \notag\\
&+2\int_{\mathbb{R}^{3}}V\ast \mathrm{div}_{x}(J_{\hbar,N:1})(t,x)\rho(t,x)\ \dd x. \label{sum of I1I4 Nbody}
\end{align}
Since $\mathbf{J}$ is anti-symmetric we have $u\cdot (u\mathbf{J})=0$. Therefore, by Lemma \ref{cancellation lemma} for any fixed $\ell$ we have   
\begin{align*}
&-\sum_{j=1}^{3}(u_{j}(x^{\ell})+\Pi_{j}^{\ell})\vee (u\mathbf{J})_{j}(x^{\ell})+\frac{1}{2}\sum_{1\leq j,k\leq 3}(\Pi_{k}^{\ell}\vee\mathbf{J}_{jk}(x^{\ell}))\vee u_{j}(x^{\ell})\\
&=-\frac{1}{2}\sum_{1\leq j,k\leq 3}\Pi_{j}^{\ell}\vee (u_{k}(x^{\ell})\vee \mathbf{J}_{kj}(x^{\ell}))+\frac{1}{2}\sum_{1\leq j,k\leq 3}(\Pi_{k}^{\ell}\vee \mathbf{J}_{jk}(x^{\ell}))\vee u_{j}(x^{\ell})=0. 
\end{align*}
Hence, we get the following cancellation 
\begin{align}
&-\frac{1}{N}\sum_{\ell=1}^{N}\sum_{1\leq j,k\leq 3}\mathrm{tr}\left((u_{j}(x^{\ell})+\Pi_{j}^{\ell})\vee (u\mathbf{J})_{j}(x^{\ell})R_{\hbar,N}\right) \notag\\
&+\frac{1}{2N}\sum_{\ell=1}^{N}\sum_{1\leq j,k\leq 3}\mathrm{tr}\left((\Pi_{k}^{\ell}\vee \mathbf{J}_{jk}(x^{\ell}))\vee u_{j}(x^{\ell})R_{\hbar,N}\right)=0. \label{Cancelation Nbody}    
\end{align}
Plugging in \eqref{Cancelation Nbody} inside \eqref{sum of I1I4 Nbody} we deduce that 
\begin{align}
I^{1}+I^{2}+I^{3}+I^{4}=& -\frac{1}{N}\sum_{\ell=1}^{N}\sum_{j=1}^{3}\mathrm{tr}\big((u_{j}(x^{\ell})+\Pi_{j}^{\ell})\vee(\partial_{x_{j}^{\ell}}V\ast \rho(x^{\ell}))R_{\hbar,N}\big)  \notag\\
&-\frac{1}{2N}\sum_{\ell=1}^{N}\sum_{1\leq j,k\leq 3}\mathrm{tr}\big((\Pi_{j}^{\ell}+u_{j}(x^{\ell}))\vee((\Pi_{k}^{\ell}+u_{k}(x^
{\ell}))\vee\partial_{x_{k}}u^{j}(x^{\ell}))R_{\hbar,N}\big) \notag\\
&+\frac{2}{N^{2}}\sum_{m\neq \ell}\int_{\mathbb{R}^{3N}}\nabla_{x^{\ell}}V(x^{\ell}-x^{m})\cdot u(t,x^{\ell})\rho_{\hbar,N}(t,\dd X^{N}) \notag\\
&+2\int_{\mathbb{R}^{3}}\nabla_{x}V\ast \rho(t,x)\cdot \rho u(t,x)\ \dd x-2\int_{\mathbb{R}^{3}} \nabla_{x}V\ast \rho_{\hbar,N:1}(t,x)\cdot \rho u(t,x)\ \dd x \notag\\
&+2\int_{\mathbb{R}^{3}}V\ast \mathrm{div}_{x}(J_{\hbar,N:1})(t,x)\rho(t,x)\ \dd x. \label{I1I4 prefinal formula}
\end{align}
Integrating by parts we see that
\begin{align*}
2\int_{\mathbb{R}^{3}}V\ast\mathrm{div}_{x}(J_{\hbar,N:1})(t,x)\rho(t,x)\ \dd x&=-2\int_{\mathbb{R}^{3}}\nabla_{x} V\ast \rho(t,x)\cdot J_{\hbar,N:1}(t,x)\ \dd x\\
&=\sum_{j=1}^{3}\mathrm{tr}\left(\Pi_{j}\vee \partial_{x_{j}}V\ast \rho R_{\hbar,N:1}\right).     
\end{align*}
Therefore we have 
\begin{align}
&-\frac{1}{N}\sum_{\ell=1}^{N}\sum_{j=1}^{3}\mathrm{tr}\big(\Pi_{j}^{\ell}\vee(\partial_{x_{j}^{\ell}}V\ast \rho(x^{\ell}))R_{\hbar,N}\big) +2\int_{\mathbb{R}^{3}}V\ast \mathrm{div}_{x}(J_{\hbar,N:1})(t,x)\rho(t,x)\ \dd x \notag\\
&=-\sum_{j=1}^{3}\mathrm{tr}\left(\Pi_{j}\vee \partial_{x_{j}}V\ast \rho R_{\hbar,N:1}\right)+\sum_{j=1}^{3}\mathrm{tr}\left(\Pi_{j}\vee \partial_{x_{j}}   V\ast \rho R_{\hbar,N:1}\right)=0. 
\label{vanishing N body}   
\end{align}
In addition, observe  that 
\begin{align*}
-\frac{1}{N}\sum_{\ell=1}^{N}\sum_{j=1}^{3}\mathrm{tr}\left(u_{j}(x^{\ell})\vee \partial_{x_{j}^{\ell}}V\ast \rho(x^{\ell}) R_{\hbar,N}\right)=-2\int_{\mathbb{R}^{3}}\nabla_{x} V\ast \rho(t,x)\cdot u(t,x)\rho_{\hbar,N:1}(t,x)\ \dd x.    
\end{align*}
Consequently, we obtain  
\begin{align}
&-\frac{1}{N}\sum_{\ell=1}^{N}\sum_{j=1}^{3}\mathrm{tr}\left(u_{j}(x^{\ell})\vee \partial_{x_{j}^{\ell}}V\ast \rho(x^{\ell})R_{\hbar,N}\right)\notag\\
&+\frac{2}{N^{2}}\sum_{m\neq \ell}\int_{\mathbb{R}^{3N}}\nabla_{x^{\ell}}V(x^{\ell}-x^{m})\cdot u(t,x^{\ell})\rho_{\hbar,N}(t,\dd X^{N})\notag\\
&+2\int_{\mathbb{R}^{3}}\nabla_{x}V\ast \rho(t,x)\cdot \rho u(t,x)\ \dd x-2\int_{\mathbb{R}^{3}} \nabla_{x}V\ast \rho_{\hbar,N:1}(t,x)\cdot \rho u(t,x)\ \dd x \notag\\
&=-2\int_{\mathbb{R}^{3}}u(t,x)\cdot \nabla_{x}V\ast \rho(t,x)\rho_{\hbar,N:1}(t,x)\ \dd x \notag\\
&+\frac{2}{N^{2}}\sum_{m\neq \ell}\int_{\mathbb{R}^{3N}}\nabla_{x^{\ell}}V(x^{\ell}-x^{m})\cdot u(t,x^{\ell})\rho_{\hbar,N}(t,\dd X^{N})\notag\\
&+2\int_{\mathbb{R}^{3}}\nabla_{x}V\ast \rho(t,x)\cdot \rho u(t,x)\ \dd x-2\int_{\mathbb{R}^{3}} \nabla_{x}V\ast \rho_{\hbar,N:1}(t,x)\cdot \rho u(t,x)\ \dd x. \label{rhs prefinal NBODY}
\end{align}
Now, note the following identities. First, we have 
\begin{align}
&\frac{2}{N^{2}}\sum_{m\neq \ell}\int_{\mathbb{R}^{3N}}\nabla_{x^{\ell}}V(x^{\ell}-x^{m})\cdot u(t,x^{\ell})\rho_{\hbar,N}(t,\dd X^{N}) \notag\\
&= 2\int_{\mathbb{R}^
{3N}}\int_{\Delta^{c}}\nabla_{x}V(x-y)\cdot u(t,x)\mu_{X^{N}}^{\otimes 2}(\dd x\dd y)\rho_{\hbar,N}(t,\dd X^{N}). \label{pure empirical terms}  
\end{align}
Furthermore it holds that  
\begin{align}
&-2\int_{\mathbb{R}^{3}}u(t,x)\cdot \nabla_{x}V\ast \rho(t,x)\rho_{\hbar,N:1}(t,x)\ \dd x \notag\\
&=-2\int_{\mathbb{R}^{3N}}\int_{\mathbb{R}^{3}}u(t,x)\cdot \nabla_{x}V\ast \rho(t,x)\mu_{X^{N}}(\dd x)\rho_{\hbar,N}(t,\dd X^{N}) \notag\\
&=-2\int_{\mathbb{R}^{3N}}\int_{\mathbb{R}^{3}\times \mathbb{R}^{3}}u(t,x)\cdot\nabla_{x}V(x-y)\rho(t,y)\ \dd y\mu_{X^{N}}(\dd x)\rho_{\hbar,N}(t,\dd X^{N}).\label{identity for mixed terms}
\end{align}
In addition, we have that  
\begin{align}
-&2\int_{\mathbb{R}^{3}}\nabla_{x}V\ast \rho_{\hbar,N:1}(t,x)\cdot \rho u(t,x)\ \dd x=2\int_{\mathbb{R}^{3}} V\ast \mathrm{div}_{x}(\rho u)(t,x)\rho_{\hbar,N:1}(t,x)\ \dd x \notag\\
=&\int_{\mathbb{R}^{3N}}\int_{\mathbb{R}^{3}\times \mathbb{R}^{3}}V(x-y)\mathrm{div}_{x}(\rho u)(t,y)\dd y\mu_{X^{N}}(\dd x)\rho_{\hbar,N}(t,\dd X^{N}) \notag\\
=&2\int_{\mathbb{R}^{3N}}\int_{\mathbb{R}^{3}\times \mathbb{R}^{3}}\nabla_{x}V(x-y)\cdot \rho u(t,y)\ \dd y\mu_{X^{N}}(\dd x)\rho_{\hbar,N}(t,\dd X^{N}) \notag\\
=&-2\int_{\mathbb{R}^{3N}}\int_{\mathbb{R}^{3}\times \mathbb{R}^{3}}u(t,x)\cdot \nabla_{x}V(x-y)\rho(t,x)\mu_{X^{N}}(\dd y)\rho_{\hbar,N}(t,\dd X^{N}). \label{identity for mixed terms2}
\end{align}
Inserting \eqref{pure empirical terms}-\eqref{identity for mixed terms2} inside \eqref{rhs prefinal NBODY} we conclude that the right-hand side of \eqref{rhs prefinal NBODY} writes 
\begin{align}
&2\int_{\mathbb{R}^{3N}}\int_{\Delta^{c}}u(t,x)\cdot \nabla_{x}V(x-y)(\mu_{X^{N}}-\rho(t,\cdot))^{\otimes 2}(\dd x\dd y)\rho_{\hbar,N}(t,\dd X^{N}). 
\end{align}
Using that $\nabla_{x}V$ is odd we can symmetrize the last expression and find that it is equal to 
\begin{align}
\int_{\mathbb{R}^{3N}}\int_{\Delta^{c}}(u(t,x)-u(t,y))\cdot \nabla_{x}V(x-y)(\mu_{X^{N}}-\rho(t,\cdot))^{\otimes }(\dd x\dd y)\rho_{\hbar,N}(t,\dd X^{N}).  \label{formula int part Nbody}      
\end{align}
Finally, inserting \eqref{formula int part Nbody} inside \eqref{I1I4 prefinal formula} yields the asserted formula. 
\end{proof}
The proof of Theorem \ref{N body mainthm} now proceeds essentially as in \cite{golse2022mean}, but we include the details below for completeness.
\par\medskip 
\textbf{Proof of Theorem \ref{N body mainthm}.} 
Set 
\begin{align*}
\mathcal{D}_{1}(t)\coloneqq -\frac{1}{2}\sum_{1\leq j,k\leq 3} \mathrm{tr}\left((\Pi_{j}+u_{j})\vee ((\Pi_{k}+u_{k})\vee \partial_{x_{k}}u_{j})R_{\hbar,N:1}(t)\right)   
\end{align*}
and 
\begin{align*}
\mathcal{D}_{2}(t)\coloneqq\int_{\mathbb{R}^{3N}}\int_{\Delta^{c}}(u(t,x)-u(t,y))\cdot \nabla_{x} V(x-y)(\mu_{X^{N}}-\rho(t,\cdot))^{\otimes 2}(\dd x\dd y)\rho_{\hbar,N}(t,\dd X^{N}).     
\end{align*}
According to Theorem \ref{time derivative of N body mod energy} it holds that 
\begin{align*}
\frac{\dd}{\dd t}\mathcal{E}_{\hbar,N}(t)=\mathcal{D}_{1}(t)+\mathcal{D}_{2}(t).     
\end{align*}
The crucial  difference in comparison to the estimates presented in the proof of Theorem \ref{main thm semi classical} is reflected in the estimate for $\mathcal{D}_{2}(t)$, which necessitates the application of Proposition \ref{commutator estimate}.\\ 
\textbf{Step 1. Estimate on $\mathcal{D}_{1}(t)$.} Let $\{\psi_{\ell}\}_{\ell=1}^{\infty}$ be the eigenfunction decomposition of $R_{\hbar,N:1}$, i.e $\{\psi_{\ell}\}_{\ell=1}^{\infty}\subset C([0,T];\mathfrak{H})$ is a complete system such that: 
\begin{align*}
R_{\hbar,N:1}(t)=\sum_{\ell=1}^{\infty}\lambda_{\ell}\vert \psi_{\ell
}(t,\cdot)\rangle\vert \langle \psi_{\ell}(t,\cdot)\vert \qquad \mbox{with}\    \lambda_{\ell}\geq 0,\ \sum_{\ell=1}^{\infty}\lambda_{\ell}=1.  
\end{align*}
Recall the notation $P_{j}=\Pi_{j}+u_{j}$ and the formula \eqref{formula for anticommutator of Pj}:
\begin{align*}
P_{j}\vee (P_{k}\vee \partial_{x_{k}}u_{j})= P_{j}P_{k}\partial_{x_{k}}u_{j}+P_{j}\partial_{x_{k}}u_{j}P_{k}+P_{k}\partial_{x_{k}}u_{j}P_{j}+\partial_{x_{k}}u_{j}P_{k}P_{j}   
\end{align*}
Therefore, it follows that 
\begin{align}
\mathcal{D}_{1}(t)=&-\frac{1}{2}\sum_{\ell=1}^{\infty}\lambda_{\ell}\sum_{1\leq j,k\leq 3}\langle \psi_{\ell}\vert  P_{j}P_{k}\partial_{x_{k}}u_{j}\vert\psi_{\ell}\rangle \notag\\
&-\frac{1}{2}\sum_{\ell=1}^{\infty}\lambda_{\ell}\sum_{1\leq j,k\leq 3} \langle \psi_{\ell} \vert P_{j}(\partial_{x_{k}}u_{j}+\partial_{x_{j}}u_{k})P_{k}\vert \psi_{\ell}\rangle \notag\\
&-\frac{1}{2}\sum_{\ell=1}^{\infty}\lambda_{\ell}\sum_{1\leq j,k\leq 3}\langle \psi_{\ell} \vert \partial_{x_{k}}u_{j}P_{k}P_{j}\vert \psi_{\ell}\rangle.  \label{D1 estimate N body} 
\end{align}
Keep $\ell$ fixed. By exactly the same argument leading to \eqref{Est D1}-\eqref{EST D3} we obtain the following estimates:
\begin{align*}
&\sum_{1\leq j,k\leq 3}\langle \psi_{\ell}\vert  P_{j}P_{k}\partial_{x_{k}}u_{j}\vert \psi_{\ell}\rangle\leq  (3\left\Vert D_{x}u\right\Vert_{\infty} +9)\sum_{j=1}^{3}\langle \psi_{\ell}\vert  \Pi_{j}^{2}\vert \psi_{\ell}\rangle  +9\hbar^{2}\left\Vert \Delta_{x}u \right\Vert_{\infty}^{2},\\
&\sum_{1\leq j,k\leq 3}\langle \psi_{\ell}\vert P_{j}(\partial_{x_{k}}u_{j}+\partial_{x_{j}}u_{k})P_{k}\vert \psi_{\ell}\rangle\leq 3\left\Vert D^{\mathbf{s}}u\right\Vert_{\infty}\sum_{j=1}^{3}\langle \psi_{\ell}\vert  \Pi_{j}^{2}\vert \psi_{\ell}\rangle     
\end{align*}
and 
\begin{align*}
\sum_{1\leq j,k\leq 3}\langle \psi_{\ell}\vert  \partial_{x_{k}}u_{j}P_{k}P_{j}\vert \psi_{\ell}\rangle \leq 3(\left\Vert D_{x}u\right\Vert_{\infty}+\frac{1}{2} )\sum_{j=1}^{3}\langle \psi_{\ell}\vert \Pi_{j}^{2}\vert \psi_{\ell}\rangle    +\frac{9\hbar^{2}}{2}\left\Vert \Delta_{x}u\right\Vert_{\infty}^{2}.   
\end{align*}
Substituting the above estimates inside \eqref{D1 estimate N body} we conclude that 
\begin{align}
\mathcal{D}_{1}(t)&\leq c_{1}\sum_{\ell=1}^{\infty}\lambda_{\ell}\sum_{j=1}^{3}\langle \psi_{\ell}\vert \Pi_{j}^{2}\vert \psi_{\ell}\rangle+c_{2}\hbar^{2} \notag\\
&=c_{1}\mathrm{tr}\Big(\sum_{j=1}^{3}\Pi_{j}^{2}R_{\hbar,N:1}(t)\Big)+c_{2}\hbar^{2}=c_{1}\mathcal{K}_{\hbar,N}(t)+c_{2}\hbar^{2}. \label{final est on cal D1}       
\end{align}
for some constants $c_{1}=c_{1}(\left\Vert D_{x}u\right\Vert_{\infty})$ and $c_{2}=c_{2}(\left\Vert \Delta_{x}u\right\Vert_{\infty} )$.
\\
\textbf{Step 2. Estimate on $\mathcal{D}_{2}(t)$.} Applying  Proposition \ref{commutator estimate} with $\mu=\rho(t,\cdot)$ we obtain  
\begin{align*}
&\left\vert \int_{\Delta^{c}}(u(t,x)-u(t,y))\cdot \nabla_{x}V(x-y)(\mu_{X^{N}}-\rho(t,\cdot))^{\otimes 2}(\dd x\dd y)\right\vert\\
&\leq C\left(\mathbf{V}(X^{N},\rho(t,\cdot))+\frac{1+\left\Vert \rho\right\Vert_{\infty}}{N^{\beta}}\right). \end{align*}
Therefore we get 
\begin{align}
\mathcal{D}_{2}(t) &\leq \int_{\mathbb{R}^{3N}}\left\vert \int_{\Delta^{c}}(u(t,x)-u(t,y))\cdot \nabla_{x}V(x-y)(\mu_{X^{N}}-\rho(t,\cdot))^{\otimes 2}(\dd x\dd y)\right\vert\rho_{\hbar,N}(t,\dd X^{N}) \notag\\
&\leq C\int_{\mathbb{R}^{3N}} \mathbf{V}(X^{N},\rho(t,\cdot))\rho_{\hbar,N}(t,\dd X^{N})+C\int_{\mathbb{R}^{3N}}\left(\frac{1+\left\Vert \rho\right\Vert_{\infty}}{N^{\beta}}\right)\rho_{\hbar,N}(t,\dd X^{N}) \notag\\
&\leq C \mathcal{V}_{\hbar,N}(t)+C\left(\frac{1+\left\Vert \rho\right\Vert_{\infty}}{N^{\beta}}\right),  \label{D2 est N body}
\end{align}
where in the last inequality we used \eqref{VhN formula} and that $\int_{\mathbb{R}^{3N}}\rho_{\hbar,N}(t,\dd X^{N})=1$. Combining \eqref{final est on cal D1} and \eqref{D2 est N body} we deduce that 
\begin{align*}
\frac{\dd}{\dd t}\mathcal{E}_{\hbar,N}(t)\leq  C\left(\mathcal{E}_{\hbar,N}(t)+N^{-\beta}+\hbar^{2}\right)    
\end{align*}
so that by Gr\"onwall's lemma we infer 
\begin{align*}
\underset{t\in [0,T]}{\sup}\mathcal{E}_{\hbar,N}(t)\leq e^{CT}\left(\mathcal{E}_{\hbar,N}(0)+T(N^{-\beta}+\hbar^{2})\right)    
\end{align*}
which concludes the proof of \eqref{N body N body energy convergence}. For the convergence \eqref{N body final convergence} see Remark \eqref{remark how to deduce convergence}. 
\qed 
\section{Admissible initial data}
{\label{initial data sec}}
The purpose of this section is to construct initial data realizing the vanishing of the initial quantum modulated energy and re-normalized quantum modulated energy, i.e. initial data such that $\mathcal{E}_{\hbar}(0)\underset{\hbar\rightarrow 0}{\rightarrow}0$ and $\mathcal{E}_{\hbar,N}(0)\underset{\hbar+\frac{1}{N}\rightarrow 0}{\rightarrow}0$.  
\begin{prop}\label{addmisible data 1body}
Suppose that:
\begin{itemize}
\item $u^{\mathrm{in}}\in W^{1,\infty}(\mathbb{R}^{3})$.
\item $A\in \mathrm{Lip}(\mathbb{R}^{3})$. 
\item   $\rho^{\mathrm{in}}\in \mathcal{P}_{c}(\mathbb{R}^{3})\cap L^{\infty}(\mathbb{R}^{3})$
and 
$\nabla_{x}\sqrt{\rho^{\mathrm{in}}}\in L^{2}(\mathbb{R}^{3})$.  
\end{itemize}  
Then, there is a family of wave functions $\left\{\psi_{\hbar}\right\}_{\hbar>0}$ such that 
\begin{itemize}
    \item $\left\{\psi_{\hbar}\right\}_{\hbar>0}\subset L^{2}(\mathbb{R}^{3};\mathbb{C})$ and $\int_{\mathbb{R}^{3}}\left\vert \psi_{\hbar}\right\vert^{2}(x)\ \dd x=1 $. 
    \item It holds that  $\mathcal{E}_{\hbar}(0)\underset{\hbar \rightarrow 0}{\rightarrow} 0$. 
\end{itemize}
\end{prop}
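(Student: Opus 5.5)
The plan is to build $\psi_\hbar$ from \emph{locally linear phases on a fine cubic grid}. A single WKB phase $S$ cannot be used here, because $\nabla S=A+u^{\mathrm{in}}$ forces $\mathrm{curl}(A+u^{\mathrm{in}})=0$, and no such condition is assumed. On a cube of side $\varepsilon$, however, the Lipschitz field $A+u^{\mathrm{in}}$ differs from its value at the centre by only $O(\varepsilon)$. So on each cube I use a plane wave with that constant momentum, and I keep the pieces on disjoint supports so that no interference terms appear in $|\psi_\hbar|^{2}$.

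\textbf{Construction.} Fix $\varepsilon,\delta\in(0,1)$, later chosen as powers of $\hbar$. Let $\{Q_k\}$ be the cubes of side $\varepsilon$ of the lattice $\varepsilon\mathbb{Z}^{3}$ that meet $\mathrm{supp}\,\rho^{\mathrm{in}}$. Since $\rho^{\mathrm{in}}$ has compact support, there are at most $C\varepsilon^{-3}$ of them. Let $x_k$ be the centre of $Q_k$.

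For each $k$ choose a cutoff $\chi_k\in C_c^\infty(\mathrm{int}\,Q_k)$ with $0\le\chi_k\le1$, $\chi_k=1$ on the concentric cube of side $(1-\delta)\varepsilon$, and $|\nabla\chi_k|\le C/(\varepsilon\delta)$. Set $v\coloneqq A+u^{\mathrm{in}}$, $\phi_k(x)\coloneqq v(x_k)\cdot x$, and
\begin{align*}
\tilde\psi_\hbar\coloneqq\sum_k\chi_k\sqrt{\rho^{\mathrm{in}}}\,e^{i\phi_k/\hbar},\qquad \psi_\hbar\coloneqq \tilde\psi_\hbar/\|\tilde\psi_\hbar\|_2 .
\end{align*}

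\textbf{Density and interaction term.} Because the supports are disjoint, $|\tilde\psi_\hbar|^2=\rho^{\mathrm{in}}\sum_k\chi_k^2$ has no cross terms. The difference $\rho^{\mathrm{in}}-|\tilde\psi_\hbar|^2$ is bounded by $\|\rho^{\mathrm{in}}\|_\infty$ and is supported in the boundary layers $\bigcup_k\big(Q_k\setminus\{\chi_k=1\}\big)$. This set has total measure at most $C\varepsilon^{-3}\cdot\varepsilon^3\delta=C\delta$.

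Hence $\|\tilde\psi_\hbar\|_2^2\to1$, and $\|\rho_\hbar(0)-\rho^{\mathrm{in}}\|_{L^1\cap L^\infty}$-type norms on this set go to $0$; in particular $\|\rho_\hbar(0)-\rho^{\mathrm{in}}\|_{6/5}\le C\delta^{5/6}+|1-\|\tilde\psi_\hbar\|_2^{-2}|\to0$. By Hardy–Littlewood–Sobolev,
\begin{align*}
\int V\ast(\rho_\hbar-\rho^{\mathrm{in}})(\rho_\hbar-\rho^{\mathrm{in}})\,\dd x\le C\|\rho_\hbar-\rho^{\mathrm{in}}\|_{6/5}^2\to0 .
\end{align*}

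\textbf{Kinetic term.} On $Q_k$ the magnetic momentum acts on a single piece as
\begin{align*}
(i\hbar\nabla+v)\big(\chi_k\sqrt{\rho^{\mathrm{in}}}e^{i\phi_k/\hbar}\big)=e^{i\phi_k/\hbar}\Big(i\hbar\nabla(\chi_k\sqrt{\rho^{\mathrm{in}}})+(v-v(x_k))\chi_k\sqrt{\rho^{\mathrm{in}}}\Big).
\end{align*}
On $Q_k$ we have $|v-v(x_k)|\le\sqrt3\,\mathrm{Lip}(v)\,\varepsilon$. Summing over the disjoint supports, and using that the normalising factor tends to $1$, gives
\begin{align*}
\int|(i\hbar\nabla+A+u^{\mathrm{in}})\psi_\hbar|^2\,\dd x\le C\Big(\hbar^2\|\nabla\sqrt{\rho^{\mathrm{in}}}\|_2^2+\hbar^2\|\rho^{\mathrm{in}}\|_\infty\sum_k\|\nabla\chi_k\|_2^2+\varepsilon^2\Big).
\end{align*}
Each $\nabla\chi_k$ is supported in a layer of measure about $\varepsilon^3\delta$ and has size at most $C/(\varepsilon\delta)$. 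With $C\varepsilon^{-3}$ cubes this gives
\begin{align*}
\sum_k\|\nabla\chi_k\|_2^2\le C\varepsilon^{-3}\cdot\frac{\varepsilon^3\delta}{\varepsilon^2\delta^2}=\frac{C}{\varepsilon^2\delta}.
\end{align*}
The kinetic term is therefore $O\big(\hbar^2+\hbar^2\varepsilon^{-2}\delta^{-1}+\varepsilon^2\big)$.

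\textbf{Main obstacle: the choice of scales.} The cutoff-gradient cost $\hbar^2/(\varepsilon^2\delta)$ competes with both the phase mismatch $\varepsilon^2$ and the mass lost in the layers, which is of order $\delta$. The first choice to try is $\varepsilon=\delta=\hbar^{1/4}$. Then $\hbar^2\varepsilon^{-2}\delta^{-1}=\hbar^{5/4}$, $\varepsilon^2=\hbar^{1/2}$ and $\delta=\hbar^{1/4}$, so every term tends to $0$. Hence $\mathcal{E}_\hbar(0)\to0$, and $\psi_\hbar\in H^1$ with $\|\psi_\hbar\|_2=1$.
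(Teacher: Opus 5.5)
Your proposal is correct and follows essentially the same construction as the paper: disjointly supported smooth cutoffs on an $\varepsilon$-grid of cubes, each carrying a plane wave with momentum $(A+u^{\mathrm{in}})(x_k)$, followed by normalization, $L^{6/5}$ convergence of the density combined with Hardy--Littlewood--Sobolev for the interaction term, and the same three-term splitting of the magnetic kinetic energy. Your bookkeeping is slightly sharper than the paper's, because you use $\sum_k\chi_k^2\le 1$ and the measure of the support of $\nabla\chi_k$ where the paper uses a cruder factor $N_\hbar$, and you also give an explicit admissible choice of scales, $\varepsilon=\delta=\hbar^{1/4}$, which the paper leaves implicit.
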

\begin{proof}
\textbf{1. Construction of the family $\left\{\psi_{\hbar}\right\}_{\hbar>0}$. }
For brevity we set 
\begin{align*}
U\coloneqq   A+u^{\mathrm{in}}\ \mbox{and}\ \mu=\sqrt{\rho^{\mathrm{in}}}.   
\end{align*}
Suppose that the support of $\mu$ is contained in some square, i.e. $\mathrm{supp}(\mu)\subset [-R,R]^{3}$ for some $R>0$. Let $\{Q_{k,\hbar}\}_{k=1}^{N_{\hbar}}$ be a equipartition of $[-R,R]^{3}$ into cubes with center $x_{k,\hbar}$ and width $\varepsilon_{\hbar}$. For any $1\leq k\leq N_{\hbar}$ fix a cut-off function $\chi
_{k,\hbar}$ such that: 
\begin{itemize}
\item $ \chi_{k,\hbar}\in C^{\infty}_{0}(\mathbb{R}^{3})$ and $0\leq \chi_{k,\hbar}\leq 1$.
\item  $\mathrm{supp}(\chi_{k,\hbar})\Subset Q_{k,\hbar}$ is such that $\chi_{k,\hbar}\equiv 1$ on a subcube $Q_{k,\hbar}'\subset Q_{k,\hbar}$ centered at $x_{k,\hbar}$ with $\mathrm{dist}(\partial Q_{k,\hbar}',\partial Q_{k,\hbar})=\delta_{\hbar}$. Moreover $\left\Vert \nabla_{x}\chi_{\hbar,k}\right\Vert_{\infty}\leq 2\delta_{\hbar}^{-1} $. 
\item It holds that: 
$\varepsilon_{\hbar}\underset{\hbar\rightarrow 0}{\rightarrow}0$, $\delta_{\hbar}\underset{\hbar\rightarrow 0}{\rightarrow}0$  and $\varepsilon_{\hbar}^{-1}\delta_{\hbar}+\hbar^{2}\varepsilon_{\hbar}^{-3}\delta_{\hbar}^{-2}\underset{\hbar\rightarrow 0}{\rightarrow} 0$. 
\end{itemize}
Let 
\begin{align}
\Psi_{\hbar}(x)\coloneqq \mu
(x)\sum_{k=1}^{N_{\hbar}}\chi_{k,\hbar}(x)e^{\frac{i(x-x_{k,\hbar})\cdot U(x_{k,\hbar})}{\hbar}} \label{Psihbar sum}    
\end{align}
and set 
\begin{align}
\psi_{\hbar}(x)=\frac{\Psi_{\hbar}(x)}{\lambda_{\hbar}}\ \mbox{where}\ \lambda_{\hbar}\coloneqq \left\Vert \Psi_{\hbar}\right\Vert_{2}. \label{def of psihbar}      
\end{align}
Note the following:  
\begin{enumerate}
\item $\left\Vert \psi_{\hbar}\right\Vert_{2}=1$. 
\item Since the supports of the $\chi_{k,\hbar}$'s are pairwise disjoint we have $\chi_{k,\hbar}\chi_{j,\hbar}\equiv 0$ whenever $k\neq j$ and consequently   
\begin{align}
\left\vert \psi_{\hbar}\right\vert^{2} (x)=\frac{1}{\lambda_{\hbar}^{2}}\mu^{2}(x)\sum_{k=1}^{N_{\hbar}}\chi_{k,\hbar}^{2}(x). \label{psihbar square}    
\end{align}
\end{enumerate}
Recall that $\mathcal{K}_{\hbar},\mathcal{V}_{\hbar}$ are given by \eqref{kinetic and int parts def}. We proceed by showing that $\mathcal{V}_{\hbar}(0)\underset{\hbar \rightarrow 0}{\rightarrow} 0$ and $\mathcal{K}_{\hbar}(0)\underset{\hbar \rightarrow 0}{\rightarrow} 0$.\\
\textbf{2. Vanishing of $\mathcal{V}_{\hbar}(0)$ as $\hbar\rightarrow 0$.}
Set $\rho_{\hbar}\coloneqq \left\vert \psi_{\hbar}\right\vert^{2}$ and  recall that by passing to Fourier we have  
\begin{align*}
\int_{\mathbb{R}^{3}}V\ast (\rho_{\hbar}-\rho)(\rho_{\hbar}-\rho)(x)\ \dd x=\int_{\mathbb{R}^{3}}\frac{\left\vert \widehat{\rho_{\hbar}-\rho}\right\vert^{2}(\xi)}{\left\vert \xi\right\vert^{2} }\ \dd \xi=\left\Vert \rho_{\hbar}-\rho\right\Vert_{\dot H^{-1}}^{2}.      
\end{align*}
We proceed by showing that $\left\Vert \rho_{\hbar}-\rho\right\Vert_{\dot{H}^{-1}}\underset{\hbar \rightarrow 0}{\rightarrow}0$. In fact we will show the stronger convergence $\left\Vert \rho_{\hbar}-\rho\right\Vert_{L^{\frac{6}{5}}}\underset{\hbar\rightarrow 0}{\rightarrow} 0$: due to the embedding  $L^{\frac{6}{5}}(\mathbb{R}^{3})\hookrightarrow \dot H^{-1}(\mathbb{R}^{3})$ this convergence would imply $\dot H^{-1}(\mathbb{R}^{3})$ convergence. First, thanks to \eqref{psihbar square} we have 
\begin{align*}
\lambda_{\hbar}^{2}=\sum_{k=1}^{N_{\hbar}}\int_{\mathbb{R}^{3}}\chi_{k,\hbar}^{2}(x)\mu^{2}(x)\ \dd x.    
\end{align*}
Note that $\sum_{k=1}^{N_{\hbar}}\chi_{k,\hbar}^{2}=\sum_{k=1}^{N_{\hbar}}\chi_{k,\hbar}\equiv 1$ on $\mathcal{Q}_{\hbar}\coloneqq\bigsqcup_{k=1}^{N_{\hbar}}Q_{k,\hbar}'$. Using that $N_{\hbar}\sim \varepsilon_{\hbar}^{-3}$ we can estimate the Lebesgue measure of the complement of $\mathcal{Q}_{\hbar}$ in $[-R,R]^{3}$ as follows:  
\begin{align} \left\vert [-R,R]^{3}\setminus \mathcal{Q}_{\hbar}\right\vert \leq CN_{\hbar}\varepsilon_{\hbar}^{2}\delta_{\hbar}\leq C\varepsilon_{\hbar}^{-1}\delta_{\hbar}\underset{\hbar \rightarrow 0}{\rightarrow}0.     \label{measure of complement} 
\end{align}
Therefore it follows that 
\begin{align*}
\lambda_{\hbar}^{2}&=\int_{\mathcal{Q}_{\hbar}}\sum_{k=1}^{N_\hbar}\chi_{k,\hbar}^{2}(x)\mu^{2}(x)\ \dd x+\int_{[-R,R]^{3}\setminus\mathcal{Q}_{\hbar}}\sum_{k=1}^{N_{\hbar}}\chi_{k,\hbar}^{2}(x)\mu^{2}(x)\ \dd x\\
&=\int_{\mathcal{Q}_{\hbar}}\mu^{2}(x)\ \dd x+\int_{[-R,R]^{3}\setminus \mathcal{Q}_{\hbar}}\sum_{k=1}^{N_{\hbar}}\chi_{k,\hbar}^{2}(x)\mu^{2}(x)\ \dd x.     
\end{align*}
In view of \eqref{measure of complement} we have 
\begin{align*}
\int_{\mathcal{Q}_{\hbar}} \mu^{2}(x)\ \dd x=\int_{[-R,R]^{3}}\mu^{2}(x)\ \dd x-\int_{[-R,R]^{3}\setminus \mathcal{Q}_{\hbar}}\mu^{2}(x)\ \dd x\underset{\hbar\rightarrow 0}{\rightarrow} 1  
\end{align*}
and 
\begin{align*}
\int_{[-R,R]^{3}\setminus \mathcal{Q}_{\hbar}}\sum_{k=1}^{N_{\hbar}}\chi_{k,\hbar}^{2}(x)\mu^{2}(x)\ \dd x\leq C\left\vert [-R,R]^{3}\setminus \mathcal{Q}_{\hbar}
\right\vert\underset{\hbar\rightarrow 0}{\rightarrow}0.
\end{align*}
This proves that 
\begin{align}
\lambda_{\hbar}\underset{\hbar\rightarrow 0}{\rightarrow}1. \label{lambdah goes to 0}    
\end{align}
Furthermore, we have 
\begin{align}
&\int_{\mathbb{R}^{3}}\left\vert \left\vert \psi_{\hbar}\right\vert^{2}-\mu^{2}\right\vert^{\frac{6}{5}}(x)\ \dd x=\int_{\mathbb{R}^{3}}\left\vert \mu^{2}\left(\frac{1}{\lambda_{\hbar}^{2}}\sum_{k=1}^{N_{\hbar}}\chi_{k,\hbar}^{2}-1\right)\right\vert^{\frac{6}{5}}(x)\ \dd x \notag\\
&=\int_{\mathcal{Q}_{\hbar}}\left\vert \mu^{2}\left(\frac{1}{\lambda_{\hbar}^{2}}\sum_{k=1}^{N_{\hbar}}\chi_{k,\hbar}^{2}-1\right)\right\vert^{\frac{6}{5}}(x)\ \dd x+\int_{[-R,R]^{3}\setminus \mathcal{Q}_{\hbar}}\left\vert \mu^{2}\left(\frac{1}{\lambda_{\hbar}^{2}}\sum_{k=1}^{N_{\hbar}}\chi_{k,\hbar}^{2}-1\right)\right\vert^{\frac{6}{5}}(x)\ \dd x \notag\\
&=\int_{\mathcal{Q}_{\hbar}}\left\vert \mu^{2}\left(\frac{1}{\lambda_{\hbar}^{2}}-1\right)\right\vert^{\frac{6}{5}}(x)\ \dd x+\int_{[-R,R]^{3}\setminus \mathcal{Q}_{\hbar}}\left\vert \mu^{2}\left(\frac{1}{\lambda_{\hbar}^{2}}\sum_{k=1}^{N_{\hbar}}\chi_{k,\hbar}^{2}-1\right)\right\vert^{\frac{6}{5}}(x)\ \dd x. \label{rhs of L65 norm}
\end{align}
Since $\lambda_{\hbar}\underset{\hbar\rightarrow 0} {\rightarrow} 1$ (by \eqref{lambdah goes to 0}), we conclude from the dominated convergence theorem that the first integral in \eqref{rhs of L65 norm} converges to $0$ as $\hbar\rightarrow 0$ and the second integral in \eqref{rhs of L65 norm} is bounded by $C\left\vert [-R,R]^{3}\setminus \mathcal{Q}_{\hbar}\right\vert \underset{\hbar \rightarrow 0}{\rightarrow} 0$. 
This concludes the proof that $\left\Vert \left\vert \psi_{\hbar}\right\vert^{2}-\mu^{2} \right\Vert_{\frac{6}{5}}\underset{\hbar\rightarrow 0}{\rightarrow}0$.\\
\textbf{3. Vanishing of  $\mathcal{K}_{\hbar}(0)$ as $\hbar \rightarrow 0$.}
We compute that 
\begin{align*}
&i\hbar \nabla_{x}(\mu(x)\chi_{k,\hbar}(x)e^{\frac{i(x-x_{k,\hbar})\cdot U(x_{k,\hbar})}{\hbar}})\\
&=i\hbar \nabla_{x}(\mu(x)\chi_{k,\hbar}(x))e^{\frac{i(x-x_{k,\hbar})\cdot U(x_{k,\hbar})}{\hbar}}-\mu(x)\chi_{k,\hbar}(x)U(x_{k,\hbar})e^{\frac{i(x-x_{k,\hbar})\cdot U(x_{k,\hbar})}{\hbar}}\\
&=i\hbar \chi_{k,\hbar}(x)\nabla_{x}\mu(x)e^{\frac{i(x-x_{k,\hbar})\cdot U(x_{k,\hbar})}{\hbar}}+i\hbar\mu(x)\nabla_{x}\chi_{k,\hbar}(x)e^{\frac{i(x-x_{k,\hbar})\cdot U(x_{k,\hbar})}{\hbar}}\\
&-\mu(x)\chi_{k,\hbar}(x)U(x_{k,\hbar})e^{\frac{i(x-x_{k,\hbar})\cdot U(x_{k,\hbar})}{\hbar}}. 
\end{align*}
Therefore we find that 
\begin{align}
&(i\hbar\nabla_{x}+U(x))(\mu(x)\chi_{k,\hbar}(x)e^{\frac{i(x-x_{k,\hbar})\cdot U(x_{k,\hbar})}{\hbar}}) \notag\\
&=i\hbar \chi_{k,x}(x)\nabla_{x}\mu(x)e^{\frac{i(x-x_{k,\hbar})\cdot U(x_{k,\hbar})}{\hbar}}+i\hbar\mu(x)\nabla_{x}\chi_{k,\hbar}(x)e^{\frac{i(x-x_{k,\hbar})\cdot U(x_{k,\hbar})}{\hbar}} \notag\\
&+\mu(x)\chi_{k,\hbar}(x)(U(x)-U
(x_{k,\hbar}))e^{\frac{i(x-x_{k,\hbar})\cdot U(x_{k,\hbar})}{\hbar}}. \label{Formula of magnetic graident of psihbar}    
\end{align}
In view of \eqref{Formula of magnetic graident of psihbar} we obtain   
\begin{align*}
&\int_{\mathbb{R}^{3}}\left\vert (i\hbar\nabla_{x}+U(x))\psi_{\hbar}\right\vert^{2}(x) \ \dd x=\frac{1}{\lambda_{\hbar}^{2}}\int_{\mathbb{R}^{3}}\left\vert (i\hbar\nabla_{x}+U(x))\Psi_{\hbar}\right\vert^{2}(x)\ \dd x\\
&\leq \frac{4}{\lambda_{\hbar}^{2}}\int_{\mathbb{R}^{3}}\left\vert \sum_{k=1}^{N_{\hbar}}i\hbar\chi_{k,\hbar}(x)\nabla_{x}\mu(x)e^{\frac{i(x-x_{k,\hbar})\cdot U(x_{k,\hbar})}{\hbar}}\right\vert^{2}\ \dd x
\\
&+\frac{4}{\lambda_{\hbar}^{2}}\int_{\mathbb{R}^
{3}}\left\vert \sum_{k=1}^{N_{\hbar}}i\hbar \mu(x)\nabla_{x}\chi_{k,\hbar}(x)e^{\frac{i(x-x_{k,\hbar})\cdot U(x_{k,\hbar})}{\hbar}}\right\vert^{2} \ \dd x\\
&+\frac{4}{\lambda_{\hbar}^{2}}\int_{\mathbb{R}^{3}}\left\vert \sum_{k=1}^{N_{\hbar}}\mu(x)\chi_{k,\hbar}(x)(U(x)-U(x_{k,\hbar}))e^{i(x-x_{k,\hbar})\cdot U(x_{k,\hbar})}\right\vert^{2}\ \dd x\coloneqq \mathcal{T}_{1,\hbar}+\mathcal{T}_{2,\hbar}+\mathcal{T}_{3,\hbar}.     
\end{align*}
We estimate $\mathcal{T}_{1,\hbar},\mathcal{T}_{2,\hbar},\mathcal{T}_{3,\hbar}$ as follows. First, we have 
\begin{align*}
\left\vert \sum_{k=1}^{N_{\hbar}}i\hbar\chi_{k,\hbar}(x)\nabla_{x}\mu(x)e^{\frac{i(x-x_{k,\hbar})\cdot U(x_{k,\hbar})}{\hbar}}\right\vert^{2}=\hbar^{2}\sum_{k=1}^{N_{\hbar}}\chi_{k,\hbar}^{2}(x)\left\vert \nabla_{x}\mu\right\vert^{2}(x)\leq \hbar^{2}N_{\hbar}\left\vert \nabla_{x}\mu\right\vert^{2}(x).      
\end{align*}
Recalling that  $N_{\hbar}$  satisfies  $N_{\hbar}\leq C \varepsilon_{\hbar}^{-3}$ we infer
\begin{align*}
\mathcal{T}_{1,\hbar}\leq \frac{4N_{\hbar}\hbar^{2}}{\lambda_{\hbar}^{2}}\int_{\mathbb{R}^{3}}\left\vert \nabla_{x}\mu\right\vert^{2}(x)\ \dd x\leq \frac{4C\varepsilon_{\hbar}^{-3}\hbar^{2}}{\lambda_{\hbar}^{2}}\left\Vert \nabla_{x}\mu\right\Vert_{2}^{2}\underset{\hbar \rightarrow 0}{\rightarrow}0     \end{align*}
where we used that $\lambda_{\hbar}\underset{\hbar \rightarrow 0}{\rightarrow} 1$ and the assumption that $\varepsilon_{\hbar}^{-3}\hbar^{2}\underset{\hbar \rightarrow 0}{\rightarrow} 0$.
As for $\mathcal{T}_{2,\hbar}$, we have  
\begin{align*}
\mathcal{T}_{2,\hbar}= \frac{4\hbar^{2}}{\lambda_{\hbar}^{2}}\int_{\mathbb{R}^{3}}\sum_{k=1}^{N_{\hbar}}\left\vert \nabla_{x}\chi_{k,\hbar}\right\vert^{2}(x)\mu^{2}(x)\ \dd x\leq \frac{4\hbar^{2}N_{\hbar}\delta_{\hbar}^{-2}}{\lambda_{\hbar}^{2}}\left\Vert \mu\right\Vert_{2}^{2}\leq \frac{4C\hbar^{2}\varepsilon_{\hbar}^{-3}\delta_{\hbar}^{-2}}{\lambda_{\hbar}^{2}}\underset{\hbar \rightarrow 0}{\rightarrow}0      
\end{align*}
where we used the assumption that $\hbar^{2}\varepsilon_{\hbar}^{-3}\delta_{\hbar}^{-2}\underset{\hbar\rightarrow 0}{\rightarrow} 0$. 
Finally, to estimate $\mathcal{T}_{3,\hbar}$ we observe that 
\begin{align*}
\mathcal{T}_{3,\hbar}=\frac{4}{\lambda_{\hbar}^{2}}\int_{\mathbb{R}^{3}}\sum_{k=1}^{N_{\hbar}}\chi_{k,\hbar}^{2}(x)\mu^{2}(x)\left\vert U(x)-U(x_{k,\hbar})\right\vert^{2}\ \dd x.   \end{align*}
Hence, using that $\left\vert U(x)-U(x_{k,\hbar})\right\vert\leq C\varepsilon_{\hbar}$ for all $x\in \mathcal{Q}_{\hbar}$ we obtain 
\begin{align*}
\mathcal{T}_{3,\hbar}\leq \frac{4C\varepsilon_{\hbar}^{2}}{\lambda_{\hbar}^{2}}\int_{\mathbb{R}^{3}}\sum_{k=1}^{N_{\hbar}}\chi_{k,\hbar}^{2}(x)\mu^{2}(x)\ \dd x= 4C\varepsilon_{\hbar}^{2}\underset{\hbar \rightarrow 0}{\rightarrow}0.    
\end{align*}
\end{proof}
Having constructed initial data such that $\mathcal{E}_{\hbar}(0)\underset{\hbar \rightarrow 0}{\rightarrow} 0$ it is relatively straightforward to extend the construction to the $N$-body problem. 
\begin{prop}
Let the assumption of Proposition \ref{addmisible data 1body} hold. Then, there exist a family of symmetric density operators $\{R_{\hbar,N}\}_{\hbar>0,N\in \mathbb{N}}\subset \mathcal{D}_{s}(\mathfrak{H}^{\otimes N})$ such that $\mathcal{E}_{\hbar,N}(0)\underset{\hbar+\frac{1}{N}\rightarrow 0}{\rightarrow}0$.
\end{prop}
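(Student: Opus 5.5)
The natural choice is the tensorized pure state built from the one-body data of Proposition \ref{addmisible data 1body}. Let $\psi_{\hbar}$ be the family constructed there. Set $R_{\hbar,N}\coloneqq \vert\psi_{\hbar}^{\otimes N}\rangle\vert\langle\psi_{\hbar}^{\otimes N}\vert$. This operator is a symmetric density operator on $\mathfrak{H}^{\otimes N}$, since $\psi_{\hbar}^{\otimes N}$ is invariant under every $U_{\sigma}$ and has unit norm. Its marginals factorize as $R_{\hbar,N:k}=(\vert\psi_{\hbar}\rangle\vert\langle\psi_{\hbar}\vert)^{\otimes k}$. Its density is $\rho_{\hbar,N}=\rho_{\hbar}^{\otimes N}$ with $\rho_{\hbar}=\vert\psi_{\hbar}\vert^{2}$.

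The kinetic part reduces at once to the one-body case. Each summand $\mathrm{tr}\big((i\hbar\nabla_{x^{\ell}}+A(x^{\ell})+u^{\mathrm{in}}(x^{\ell}))^{2}R_{\hbar,N}\big)$ acts on a single variable, so it equals $\int\vert(i\hbar\nabla_{x}+A+u^{\mathrm{in}})\psi_{\hbar}\vert^{2}\,\dd x=\mathcal{K}_{\hbar}(0)$. Hence $\mathcal{K}_{\hbar,N}(0)=\mathcal{K}_{\hbar}(0)$, which tends to $0$ by Step 3 of the proof of Proposition \ref{addmisible data 1body}. The constant $C/N^{2/3}$ in \eqref{Def of renormalized modulated energy} vanishes as $N\to\infty$.

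The interaction part is the main step. I would expand $\mathbf{V}(X^{N},\rho^{\mathrm{in}})$ into three pieces: the off-diagonal empirical self-interaction $\frac{1}{N^{2}}\sum_{\ell\neq m}V(x^{\ell}-x^{m})$, the cross term $-\frac{2}{N}\sum_{\ell}V\ast\rho^{\mathrm{in}}(x^{\ell})$, and $\int V\ast\rho^{\mathrm{in}}\rho^{\mathrm{in}}$. Integrating against $\rho_{\hbar}^{\otimes N}$ gives exactly
\begin{align*}
\mathcal{V}_{\hbar,N}(0)=\frac{N(N-1)}{N^{2}}\int V\ast\rho_{\hbar}\,\rho_{\hbar}-2\int V\ast\rho^{\mathrm{in}}\,\rho_{\hbar}+\int V\ast\rho^{\mathrm{in}}\,\rho^{\mathrm{in}}=\mathcal{V}_{\hbar}(0)-\frac{1}{N}\int V\ast\rho_{\hbar}\,\rho_{\hbar}.
\end{align*}
The last term has modulus at most $\frac{1}{N}\Vert\rho_{\hbar}\Vert_{\dot H^{-1}}^{2}\le \frac{C}{N}\Vert\rho_{\hbar}\Vert_{6/5}^{2}$. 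The constructed $\rho_{\hbar}$ satisfies $\rho_{\hbar}\le\lambda_{\hbar}^{-2}\rho^{\mathrm{in}}$ pointwise and $\lambda_{\hbar}\to1$, so $\Vert\rho_{\hbar}\Vert_{6/5}$ is bounded uniformly in $\hbar$. Combined with $\mathcal{V}_{\hbar}(0)\to0$ from Step 2 of Proposition \ref{addmisible data 1body}, this gives $\mathcal{V}_{\hbar,N}(0)\to0$ as $\hbar+\frac{1}{N}\to0$.

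The only point needing care is justifying this expansion with the diagonal $\Delta^{c}$ removed. Here $\rho_{\hbar}^{\otimes N}$ is absolutely continuous, so configurations with coinciding points are null and $\mu_{X^{N}}$ has no atoms on the diagonal. All terms are also integrable, because $\rho_{\hbar}\in L^{1}\cap L^{\infty}$ with compact support and $V\in L^{1}_{\mathrm{loc}}$. Adding the three pieces shows $\mathcal{E}_{\hbar,N}(0)\le \mathcal{K}_{\hbar}(0)+\mathcal{V}_{\hbar}(0)+\frac{C}{N}+\frac{C}{N^{2/3}}\to0$, which proves the claim.
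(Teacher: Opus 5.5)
Your proposal is correct and follows essentially the same route as the paper. Both use the tensorized pure state $R_{\hbar}^{\otimes N}$, reduce $\mathcal{K}_{\hbar,N}(0)$ to the one-body kinetic term, and rely on the identity $\mathcal{V}_{\hbar,N}(0)=\mathcal{V}_{\hbar}(0)-\frac{1}{N}\int V\ast\rho_{\hbar}\,\rho_{\hbar}$; the paper quotes this identity from Lemma 3.5 of Golse--Paul rather than deriving it, and gets the uniform bound on $\Vert\rho_{\hbar}\Vert_{\dot H^{-1}}$ from the one-body convergence instead of your pointwise bound $\rho_{\hbar}\le\lambda_{\hbar}^{-2}\rho^{\mathrm{in}}$.
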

\begin{proof}
Let $\psi_{\hbar}$ be defined as in \eqref{def of psihbar}. Set $R_{\hbar}\coloneqq \vert \psi_{\hbar}\rangle\vert \langle \psi_{\hbar}\vert $ and  $R_{\hbar,N}\coloneqq R_{\hbar}^{\otimes N}$. Clearly $R_{\hbar,N}\in \mathcal{D}_{s}(\mathfrak{H}^{\otimes N})$ and moreover the density $\rho_{\hbar,N}$ of $R_{\hbar,N}$ is given by   
$\rho_{\hbar,N}=(\left\vert \psi_{\hbar}\right\vert^{2})^{\otimes N}=\rho_{\hbar}^{\otimes N}$. 
\\
\textbf{1.} 
We show that $\mathcal{V}_{\hbar,N}(0)\underset{\hbar+\frac{1}{N}\rightarrow 0}{\rightarrow} 0$. By Lemma 3.5 in \cite{golse2022mean} there holds the following formula 
\begin{align*}
&\int_{\mathbb{R}^{3N}} \int_{\Delta^{c}}V(x-y)(\mu_{X^{N}}-\rho^{\mathrm{in}})^{\otimes 2}(\dd x\dd y)\rho_{\hbar,N}(\dd X^{N})\\
&=\int_{\mathbb{R}^{3}\times \mathbb{R}^{3}}V(x-y)\left(\frac{N-1}{N}\rho_{\hbar}(x)\rho_{\hbar}(y)+\rho^{\mathrm{in}}(x)\rho^{\mathrm{in}}(y)-2\rho_{\hbar}(x)\rho^{\mathrm{in}}(y)\right)\ \dd x\dd y\\
&=\int_{\mathbb{R}^{3}}V\ast (\rho_{\hbar}-\rho^{\mathrm{in}})(\rho_{\hbar}-\rho^{\mathrm{in}})(x)\ \dd x-\frac{1}{N}\int_{\mathbb{R}^{3}}V\ast \rho_{\hbar}(x)\rho_{\hbar}(x)\ \dd x. 
\end{align*}
By step 1 in the proof of Proposition \ref{addmisible data 1body} we have 
\begin{align}
\int_{\mathbb{R}^{3}}V\ast(\rho_{\hbar}-\rho^{\mathrm{in}})(\rho_{\hbar}-\rho^{\mathrm{in}})(x)\ \dd x\underset{\hbar\rightarrow 0}{\rightarrow}0.  \label{vanishing of ususal part}  
\end{align}
Moreover, a particular consequence of step 1 in the proof of Proposition \ref{addmisible data 1body} is that $\left\Vert \rho_{\hbar}\right\Vert_{\dot{H}^{-1}}=O_{\hbar}(1)$. Therefore we see that  
\begin{align*}
\int_{\mathbb{R}^{3}}V\ast\rho_{\hbar}(x)\rho_{\hbar}(x)\ \dd x=\left\Vert \rho_{\hbar}\right\Vert_{\dot{H}^{-1}}^{2}=O_{\hbar}(1),    
\end{align*}
and hence 
\begin{align}
\frac{1}{N}\int_{\mathbb{R}^{3}}V\ast \rho_{\hbar}(x)\rho_{\hbar}(x)\ \dd x\underset{N\rightarrow \infty}{\rightarrow} 0. \label{Vanishing of reminder}    
\end{align}
In view of \eqref{vanishing of ususal part} and \eqref{Vanishing of reminder} we conclude that $\mathcal{V}_{\hbar,N}(0)\underset{\hbar+\frac{1}{N}\rightarrow 0}{\rightarrow} 0$. 
\\
\textbf{2.}  We prove that $\mathcal{K}_{\hbar,N}(0)\underset{\hbar+\frac{1}{N}\rightarrow 0}{\rightarrow} 0$. 
\begin{align}
\mathcal{K}_{\hbar,N}(0)&=\frac{1}{N}\sum_{\ell=1}^{N}\mathrm{tr}\left((i\hbar\nabla_{x^{\ell}}+A(x^{\ell})+u^{\mathrm{in}}(x^{\ell}))^{2}R_{\hbar,N}\right) \notag\\
&=\mathrm{tr}\left((i\hbar\nabla_{x}+A+u^{\mathrm{in}})^{2}R_{\hbar}\right)=\int_{\mathbb{R}^{3}}\left\vert (i \hbar\nabla_{x}+A+u^{\mathrm{in}})\psi_{\hbar}\right\vert^{2}(x)\ \dd x. \label{recasting Nkinetic energy as 1kinetic energy}     
\end{align}
By step 2 in the proof of Proposition \ref{addmisible data 1body} the right-hand side of \eqref{recasting Nkinetic energy as 1kinetic energy} goes to $0$ as $\hbar \rightarrow 0$.  
\end{proof}
\begin{rem}
Note that the construction of admissible initial data demonstrated in \cite{ben2026quantum} is not directly applicable in the present settings. Indeed, in \cite{ben2026quantum} it is assumed that the Laplacian is perturbed by a gradient, whereas this assumption is incompatible with the problem at stake, since any gradient is curl free and therefore the magnetic field will be $0$ if we make this extra demand.      
\end{rem}
\noindent{\bf Acknowledgments.}
The author is indebted to the University of Basel for financial support.
\bibliographystyle{abbrv}
\bibliography{references}
\end{document}